\newtheorem{teo}{Theorem}[section] 
\newtheorem*{teo*}{Theorem}
\newtheorem{lem}[teo]{Lemma} 
\newtheorem{prop}[teo]{Proposition}
\newtheorem{cor}[teo]{Corollary}
\newtheorem{definicion}{Definition}[section]
\newcommand{\R}{\mathbb{R}}\newcommand{\Rn}{\R^n}\newcommand{\Rnn}{\R^{n\times n}}
\newcommand{\N}{\mathbb{N}}
\DeclareMathOperator{\diver}{div}
\DeclareMathOperator{\cof}{cof}
\DeclareMathOperator{\tr}{tr}
\DeclareMathOperator{\pv}{pv}
\DeclareMathOperator{\loc}{loc}
\newcommand{\weakc}{\rightharpoonup}
\renewcommand{\O}{\Omega}
\renewcommand{\a}{\alpha}
\def\XXint#1#2#3{{\setbox0=\hbox{$#1{#2#3}{\int}$}
\vcenter{\hbox{$#2#3$}}\kern-.5\wd0}}
\title{$\Gamma$-convergence of polyconvex functionals involving $s$-fractional gradients to their local counterparts}
\author{J. C. Bellido, J. Cueto, C. Mora-Corral}
\begin{document}

\maketitle

\begin{abstract}
In this paper we study localization properties of the Riesz $s$-fractional gradient $D^s u$ of a vectorial function $u$ as $s \nearrow 1$.
The natural space to work with $s$-fractional gradients is the Bessel space $H^{s,p}$ for $0 < s < 1$ and $1 < p < \infty$.
This space converges, in a precise sense, to the Sobolev space $W^{1,p}$ when $s \nearrow 1$.
We prove that the $s$-fractional gradient $D^s u$ of a function $u$ in $W^{1,p}$ converges strongly to the classical gradient $Du$.
We also show a weak compactness result in $W^{1,p}$ for sequences of functions $u_s$ with bounded $L^p$ norm of $D^s u_s$ as $s \nearrow 1$.
Moreover, the weak convergence of $D^s u_s$ in $L^p$ implies the weak continuity of its minors, which allows us to prove a semicontinuity result of polyconvex functionals involving $s$-fractional gradients defined in $H^{s,p}$ to their local counterparts defined in $W^{1,p}$.
The full $\Gamma$-convergence of the functionals is achieved only for the case $p>n$.
\end{abstract}

\noindent{\bf Keywords: }Riesz fractional gradient, localization of nonlocal gradient, $\Gamma$-convergence, Bessel spaces, polyconvex functionals





\section[Introduction]{Introduction}

Fractional Sobolev spaces $W^{s,p}(\R^n)$, with $0<s<1$ and $1\le p<\infty$, are nowadays of central importance in the analysis of partial differential equations, both of local or nonlocal (or fractional) nature \cite{DPV}. These spaces provide, through the Gagliardo seminorm of a function $u$ defined as 
\[ [u]_{W^{s,p}(\R^n)}=\left(\int_{\R^n}\int_{\R^n} \frac{|u(x)-u(y)|^p}{|x-y|^{n+sp}}\,dx\,dy\right)^\frac{1}{p},\]
a measure of the fractional differentiability of order $s$ of the function $u$. Indeed, in \cite{BoBrMi2001} (see also \cite[Prop.\ 15.7]{Ponce2004}) it has been shown that for $p>1$,
\[\lim_{s \nearrow 1} (1-s)^\frac{1}{p}[u]_{W^{s,p}(\R^n)}=K(n,p)\| Du\|_{L^p(\R^n)}\]
for some constant $K(n,p)>0$.
This provides an interesting characterization of Sobolev spaces, which naturally leads to the question of the existence of a fractional differential object converging to the classical gradient as $s$ goes to $1$. The Riesz $s$-fractional gradient seems to be the right notion for such a differential object, and has been addressed by several authors in different situations in the recent years \cite{ShS2015,ShS2018,Schikorra2017,COMI2019,Silhavy2019,BeCuMC}.
The $s$-fractional gradient, $s\in(0,1)$, of an $L^1_{\loc}$ function $u:\R^n\rightarrow \R$  is defined as 
\begin{equation}\label{eq:Dsu}
D^su(x)=c_{n,s}\pv_x\int_{\mathbb{R}^n} \frac{u(x)-u(y)}{|x-y|^{n+s}}\frac{x-y}{|x-y|}dy ,
\end{equation}
whenever it makes sense, where $\pv_x$ stands for the principal value centred at $x$, and  $c_{n,s}$ is a suitable
normalizing constant. As mentioned above, the first reason for which this object deserves attention is the fact that $D^su$ converges to the classical gradient $Du$ as $s \nearrow 1$. Indeed, $D^su=D (I_{1-s}*u)$ for any $u\in C^{\infty}_c(\R^n)$ \cite{COMI2019}, where $I_{1-s}(\cdot)=\frac{c_{n,s}}{n+s-1} \left| \cdot \right|^{-n+1-s}$ is the classical Riesz potential \cite{Stein70}; therefore, applying Fourier transform, 
\[\widehat{D^su}(\xi) = 2\pi i\xi \widehat{I_{1-s}} (\xi) \hat{u} (\xi) = 2\pi i\xi \left| 2\pi\xi \right|^{s-1}\hat{u} (\xi),\]
which converges to $\widehat{Du} (\xi)$ as $s \nearrow 1$. 

Another remarkable reason why the fractional gradient seems to be the right differential object is given in \cite{Silhavy2019}: it is shown that for $s\in(0,1)$, definition \eqref{eq:Dsu} determines up to a multiplicative constant the unique object fulfilling some minimal consistency requirements from the physical and mathematical point of view, such as invariance under rotations and translations, $s$-homogeneity under dilations and some weak continuity properties.
Furthermore, an $s$-fractional divergence may be defined, satisfying a similar characterization \cite{Silhavy2019}, as well as the integration by parts formula; consequently, the $s$-fractional gradient and the $s$-fractional divergence are dual operators \cite{BeCuMC,COMI2019,DGLZ,MeS,Silhavy2019}.

The first reference dealing with this sort of differential objects seems to be \cite{horvath}. More recently, nonlocal gradients defined as integrals in bounded domains have been investigated in \cite{MeS} (see also \cite{DGLZ}), showing some vector calculus facts, such as the integration by parts formula, and localization results in various topologies when the {\it horizon} of interaction among particles vanishes.

When dealing with variational problems or fractional PDE involving the $s$-fractional gradient, there appears naturally the space $H^{s,p}(\Rn)$, for $0 < s < 1$ and $1\leq p< \infty$, defined as the completion of $C^{\infty}_c (\Rn)$ under the norm 
\[ \|u\|_{H^{s,p}(\R^n)}=\|u\|_{L^p(\R^n)}+\|D^su\|_{L^p(\R^n)}.\]
To be consistent with this definition of $H^{s,p}$ as a completion, given $u \in H^{s,p} (\Rn)$ we define $D^s u$ as the limit of $D^s u_j$ in $L^p$ when $\{ u_j \}_{j \in \N} \subset C^{\infty}_c (\Rn)$ is any sequence converging to $u$ in $H^{s,p}$.
We leave for a subsequent work the study of whether this definition for $D^s u$ coincides with Riesz' fractional gradient \eqref{eq:Dsu}. 

In \cite{ShS2015,ShS2018} the space $H^{s,p}$ is extensively studied, showing Sobolev-type and compact embedding theorems; they also provide existence results for scalar convex variational problems involving the $s$-fractional gradient, and for the derived fractional PDE obtained as the first order equilibrium condition. See also \cite{Schikorra2017} and the references therein for the case $p=1$. In \cite{COMI2019}, the space of functions whose $s$-fractional total variation is finite is considered and, subsequently, an $s$-fractional Caccioppoli perimeter is addressed. In \cite{BeCuMC}, the authors of this paper have addressed the study of polyconvex functionals depending on the $s$-fractional gradient, showing a fractional Piola identity, the weak continuity of the determinant of fractional gradients, a semicontinuity result for polyconvex functionals depending on $D^su$, and finally an existence theory in that situation.

In this paper we  continue with the study of polyconvex functionals depending on the $s$-fractional gradient by further exploring it through the study of its limit when $s \nearrow 1$. The main results of the paper are described as follows.
We prove the strong convergence in $L^p$ of $D^s u$ to $Du$ for functions $u\in W^{1,p}$, generalizing, and making the topology precise, the convergence mentioned above for smooth functions. Notice that this convergence is performed in the fractional parameter $s$ rather than in the {\it horizon}, as done in \cite{MeS}. This result is of interest in itself, as it provides a precise differential object converging to the distributional gradient. We also show a weak compactness result in $W^{1,p}$, establishing that if $\{ u_s \}$ is a sequence such that $\{ D^s u_s \}$ is bounded in $L^p$, then there exists a $u \in W^{1,p}$ such that $u_s$ converges strongly and $D^su_s$ converges weakly in $L^{p}$ 
as $s\nearrow 1$ to $u$ and $Du$, respectively. 
The weak convergence of the minors of $D^su_s$ to those of $Du$, whenever $D^su_s$ converges weakly in $L^p$ to $Du$, is also shown, and as a consequence, a new semicontinuity result for polyconvex functionals. Finally, we show that the family of vector variational problems based on minimization of
\[
\mathcal{I}_s (u)=\int_{\mathbb{R}^n}W(x,u(x),D^su(x)) \, dx, \qquad u \in H^{s,p} (\R^n, \R^n)
\] 
$\Gamma$-converges (see \cite{Braides}) to the functional
\[
\mathcal{I} (u)=\int_{\mathbb{R}^n}W(x,u(x),Du(x)) \, dx , \qquad u \in W^{1,p} (\R^n, \R^n)
\]
as $s \nearrow 1$, under the essential assumption of polyconvexity of $W(x,u, \cdot)$ \cite{dacorogna}; we also need the extra assumption $p>n$ for the $\Gamma$-convergence.
Other references dealing with $\Gamma$-convergence of variational functionals in the nonlocal setting are \cite{Pon} (in the context of $W^{s,p}$),  \cite{BeMCPe} (in nonlinear peridynamics) and \cite{MeD} (in linear and geometrically nonlinear peridynamics).

{The outline of the paper is as follows.
Section \ref{se:preliminaries} is a preliminary section where we collect several results on the fractional differential operators $D^s$ and $\diver^s$, as well as on the Bessel space $H^{s,p}$. We also provide a refinement on the Poincar\'e-Sobolev inequality in $H^{s,p}$ given in \cite{ShS2015}, with a constant independent of $s$  (Theorem \ref{th:Poincare}). In Section \ref{se:localization} we prove the localization of the fractional gradient; namely, that $D^s u$ converges to $Du$ in $L^p$ for a fixed $u \in W^{1,p}$ (Theorem \ref{Prop: convergencia del gradiente fraccionario al clásico}).
In Section \ref{se:compactness} we prove the compactness result: for any sequence $\{ u_s \}$ with a fixed complementary-value data and bounded $\{ D^s u_s \}$ in $L^p$, there exists $u\in W^{1,p}$ and a subsequence a strongly convergent to $u$ in $L^p$ and the $s$-fractional gradients weakly convergent to $Du$ in $L^p$ (Theorem \ref{Pr: weak convergence in s}). Section \ref{sc:weakcontinuity} is devoted to show the weak convergence of the minors of $D^s u_s$ to the minors of $Du$ when $D^su_s$ converges weakly to $Du$ in $L^p$ (Theorem \ref{th:wcontdet}). 
Finally, in Section \ref{se:Gamma} we prove a novel semicontinuity result for polyconvex functionals (Theorem \ref{prop:lsc}) and the $\Gamma$-convergence result of $\mathcal{I}_s$ to $\mathcal{I}$ under the assumption of polyconvexity (Theorem \ref{th:Gconvergence}) and convexity (Theorem \ref{th:GconvergenceC}).

\section[Preliminaries]{Preliminaries on Bessel spaces}\label{se:preliminaries} 

In this section we introduce the fractional differential operators $D^s$ and $\diver^s$, and the Bessel space $H^{s,p}$, together with some embedding theorems and a Poincar\'e-Sobolev inequality in this fractional context.

\subsection{Fractional differential operators}

We state the definition of the $s$-fractional gradient and divergence.
Given a function $f : \Rn \to \R$ and $x \in \Rn$ such that $f \in L^1 (B(x,r)^c)$ for every $r>0$, we define the principal value centered at $x$ of $\int_{\Rn} f$, denoted by
\[
\pv_{x} \int_{\mathbb{R}^n}f \quad \text{or} \quad \pv_{x}\int f ,
\]
as
\begin{equation*}
\lim_{r \rightarrow 0} \int_{B(x,r)^c} f,
\end{equation*}
whenever this limit exists.
We have denoted by $B(x,r)$ the open ball centered at $x$ of radius $r$, and by $B(x,r)^c$ its complement.
As most integrals in this work are over $\Rn$, we will use the symbol $\int$ as a substitute for $\int_{\Rn}$.

In order to avoid the principal value in \eqref{eq:Dsu}, we first establish the following definition for $C^{\infty}_c$ functions and then we extend it by density. 
The following definitions of $s$-fractional gradient and divergence are adapted from \cite{ShS2015,ShS2018,MeS,BeCuMC}.
Recall that $\Gamma$ denotes Euler's gamma function. 

\begin{definicion}\label{de:Ds}
	Let $0<s<1$ and set
	\begin{equation*}
	c_{n,s} := (n+s-1) \frac{\Gamma \left(\frac{n+s-1}{2} \right)}{\pi^{\frac{n}{2}} \, 2^{1-s} \, \Gamma \left(\frac{1-s}{2}\right)} .
	\end{equation*}
	
	\begin{enumerate}[a)]
		\item\label{item:Dsu} Let $u \in C^{\infty}_c (\Rn)$.
		We define $D^s u : \Rn \to \Rn$ as
		\begin{equation}\label{eq:DsuCc}
		D^s u (x):= c_{n,s} \int \frac{u(x)-u(y)}{|x-y|^{n+s}}\frac{x-y}{|x-y|}dy .
		\end{equation}
		
		\item Let $\phi \in C^{\infty}_c (\Rn, \Rn)$.
		We define $\diver^s \phi : \Rn \to \R$ as
		\begin{equation*}
		\diver^s \phi (x):= -c_{n,s} \pv_x \int\frac{\phi(x)+\phi(y)}{|x-y|^{n+s}}\cdot\frac{x-y}{|x-y|}dy .
		\end{equation*}
		
	\end{enumerate}
\end{definicion}

The integral \eqref{eq:DsuCc} is easily seen to be absolutely convergent for all $x \in \Rn$.
Moreover, by odd symmetry,
\begin{equation}\label{eq:divabsconv}
 -c_{n,s} \pv_x \int\frac{\phi(x)+\phi(y)}{|x-y|^{n+s}}\cdot\frac{x-y}{|x-y|}dy = c_{n,s} \int\frac{\phi(x) - \phi(y)}{|x-y|^{n+s}}\cdot\frac{x-y}{|x-y|} dy, 
\end{equation}
and this last integral is absolutely convergent.
Furthermore, $D^s u \in L^q (\Rn, \Rn)$ and $\diver^s \phi \in L^q (\Rn, \Rn)$ for all $q \in [1, \infty]$ for smooth and compactly supported $u$ and $\phi$; see \cite[Lemma 3.1]{BeCuMC}, if necessary.

Definition \ref{de:Ds}\,\emph{\ref{item:Dsu})} naturally extends to $u \in C^{\infty}_c (\Rn, \R^n)$ by replacing \eqref{eq:DsuCc} with
\begin{equation}\label{eq:Dsuvector}
D^s u (x):= c_{n,s} \int \frac{u(x)-u(y)}{|x-y|^{n+s}} \otimes \frac{x-y}{|x-y|}dy .
\end{equation}
Here, $\otimes$ stands for the usual tensor product of vectors.

A crucial fact is the following fractional fundamental theorem of Calculus  \cite[Th.\ 3.11]{COMI2019} (see also \cite[Th.\ 1.12]{ShS2015} or \cite[Prop.\ 15.8]{ponce_book}).
\begin{teo} \label{Th: Fractional Fundamental Theorem of Calculus}
	Let $0<s<1$. For every $u \in C_c^{\infty}(\Rn)$ and every $x \in \Rn$ we have that
	\begin{equation*}
	u(x) = c_{n,-s} \int D^s u(y) \cdot \frac{x-y}{|x-y|^{n-s+1}}dy.
	\end{equation*}
\end{teo}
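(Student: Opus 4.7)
The plan is Fourier-analytic. The first step is to identify the convolution kernel on the right-hand side as, up to a sign, the gradient of the Riesz potential $I_{1+s}$: recalling the normalization
\[
I_\alpha(z)=\frac{\Gamma\!\left(\frac{n-\alpha}{2}\right)}{\pi^{n/2}\,2^\alpha\,\Gamma\!\left(\frac{\alpha}{2}\right)}\,|z|^{\alpha-n},\qquad 0<\alpha<n,
\]
(whose Fourier transform is $\widehat{I_\alpha}(\xi)=(2\pi|\xi|)^{-\alpha}$), a direct differentiation of $I_{1+s}$ together with the definition of $c_{n,-s}$ in Definition \ref{de:Ds} yields the pointwise identity
\[
K(z):=c_{n,-s}\,\frac{z}{|z|^{n-s+1}}=-\nabla I_{1+s}(z),\qquad z\in\R^n\setminus\{0\}.
\]

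With this identification, the right-hand side of the claim reads $(D^s u \ast K)(x)$ (with vector dot product). I would first check absolute convergence of this integral for $u\in C_c^\infty$: the decay $|D^s u(y)|\lesssim(1+|y|)^{-n-s}$ at infinity (coming from $D^s u=D(I_{1-s}\ast u)$ and standard Riesz potential estimates) together with the growth $|K(x-y)|\sim|y|^{s-n}$ yields integrable tails, while local integrability of $K$ near its singularity is clear since $n-s<n$.

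I would then take Fourier transforms of both sides. Since $u\in C_c^\infty$ is Schwartz and $D^s u=D(I_{1-s}\ast u)$, we have
\[
\widehat{D^s u}(\xi)=2\pi i\,\xi\,(2\pi|\xi|)^{s-1}\hat u(\xi),\qquad \widehat K(\xi)=-2\pi i\,\xi\,(2\pi|\xi|)^{-(1+s)},
\]
the second being computed as the distributional derivative of $I_{1+s}$. Taking their dot product gives
\[
\widehat{D^s u}(\xi)\cdot\widehat K(\xi)=4\pi^2|\xi|^2\,(2\pi|\xi|)^{-2}\,\hat u(\xi)=\hat u(\xi),
\]
so Fourier inversion yields the pointwise identity (both sides being continuous functions of $x$).

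The main obstacle is the bookkeeping of the normalizing constants---verifying that the explicit $c_{n,-s}$ from Definition \ref{de:Ds} matches precisely the gamma-function ratio arising from $-\nabla I_{1+s}$---and ensuring that the Fourier computation for $K$ carries no Dirac-mass contribution at the origin. The latter is automatic because $I_{1+s}$ is locally integrable on $\R^n$ (requiring $1+s<n$, i.e.\ $n\ge 2$), hence its distributional gradient agrees with the pointwise one everywhere away from $0$ with no boundary term, and Plancherel applies without further regularization.
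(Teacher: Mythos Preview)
The paper does not supply its own proof of this theorem: it is quoted verbatim from \cite[Th.\ 3.11]{COMI2019} (with parallel references to \cite[Th.\ 1.12]{ShS2015} and \cite[Prop.\ 15.8]{ponce_book}), so there is no in-paper argument to compare against. Your Fourier-analytic route is in fact close to what one finds in those sources, and the algebra is correct: the identification $K=-\nabla I_{1+s}$ follows from the $s\mapsto -s$ version of the paper's relation \eqref{eq: constants relationship}, namely $c_{n,-s}=(n-s-1)/\gamma(1+s)$, and the symbol product does collapse to $\hat u$.

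Two points deserve more than a passing remark before this is a proof. First, the case $n=1$: there $1+s\ge n$, so $I_{1+s}$ is not locally integrable and your kernel identification and Fourier computation for $\widehat K$ break down as written; you flag this but do not resolve it. The statement still holds in dimension one, but needs a direct argument with the kernel $K$ (which \emph{is} locally integrable since $n-s<n$) or an analytic-continuation device. Second, the step ``take Fourier transforms of both sides'' hides a genuine justification: $K\notin L^1$ and $D^s u\notin\mathcal S$, so the identity $\widehat{D^s u\ast K}=\widehat{D^s u}\cdot\widehat K$ is not the textbook convolution theorem. One clean fix is to test the right-hand side against a Schwartz function, use Fubini (legitimate by your absolute-convergence estimate), and reduce to the known distributional Fourier transform of $K$; alternatively, mollify or truncate $K$ and pass to the limit. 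Neither obstacle is serious, but both need to be written out.
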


The operators $D^s$ and $\diver^s$ enjoy the following duality property, which is a nonlocal integration by parts whose proof can be found in \cite[Theorem 3.6]{BeCuMC} (see also \cite{COMI2019,MeS,Silhavy2019}).

\begin{lem} \label{Integracion por partes NL}
	Let $0<s<1$.
	Then, for all $u \in C^{\infty}_c (\Rn)$ and $\phi \in C^{\infty}_c (\Rn, \Rn)$ we have
	\begin{equation*}
	\int D^s u(x) \cdot \phi(x) \, dx = -\int u(x) \diver^s \phi(x) \, dx.
	\end{equation*}
\end{lem}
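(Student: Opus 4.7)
The plan is to reduce the identity to a truncated version, where absolute integrability makes Fubini's theorem applicable, and then pass to the limit. For $\epsilon>0$ write $K(x,y):=\frac{x-y}{|x-y|^{n+s+1}}$ and introduce the truncations
\[
D^s_\epsilon u(x):=c_{n,s}\int_{|x-y|>\epsilon}(u(x)-u(y))\,K(x,y)\,dy, \qquad \diver^s_\epsilon\phi(x):=c_{n,s}\int_{|x-y|>\epsilon}(\phi(x)-\phi(y))\cdot K(x,y)\,dy.
\]
Since $z\mapsto z/|z|^{n+s+1}$ is odd and integrable at infinity (as $s>0$), the change of variables $z=x-y$ yields $\int_{|x-y|>\epsilon}K(x,y)\,dy=0$, so both truncations simplify to
\[
D^s_\epsilon u(x)=-c_{n,s}\int_{|x-y|>\epsilon}u(y)\,K(x,y)\,dy,\qquad \diver^s_\epsilon\phi(x)=-c_{n,s}\int_{|x-y|>\epsilon}\phi(y)\cdot K(x,y)\,dy.
\]

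With these expressions the iterated integrals are absolutely convergent: on $\{|x-y|>\epsilon\}$ the kernel is bounded by $\epsilon^{-n-s}$, and $u,\phi$ have compact support. Fubini's theorem therefore applies, and a relabelling $x\leftrightarrow y$ combined with the antisymmetry $K(y,x)=-K(x,y)$ gives
\[
\int D^s_\epsilon u(x)\cdot\phi(x)\,dx = -c_{n,s}\iint_{|x-y|>\epsilon}u(y)\,K(x,y)\cdot\phi(x)\,dy\,dx = c_{n,s}\iint_{|x-y|>\epsilon}u(x)\,\phi(y)\cdot K(x,y)\,dy\,dx,
\]
and the rightmost expression is exactly $-\int u(x)\,\diver^s_\epsilon\phi(x)\,dx$. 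The integration-by-parts identity thus holds at every truncation level.

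It remains to pass to the limit $\epsilon\to 0$. The estimate $|u(x)-u(y)|\leq\|\nabla u\|_\infty |x-y|$ dominates the integrand in \eqref{eq:DsuCc} by $\|\nabla u\|_\infty |x-y|^{1-n-s}$, which is locally integrable; dominated convergence then yields $D^s_\epsilon u\to D^s u$ pointwise, with a uniform error of order $\epsilon^{1-s}/(1-s)$. The same argument, applied to the absolutely convergent representation \eqref{eq:divabsconv}, delivers $\diver^s_\epsilon\phi\to\diver^s\phi$ pointwise with the analogous uniform control. Since $u$ and $\phi$ are compactly supported these convergences transfer to both sides of the truncated identity, concluding the proof.

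The only delicate point---really the sole obstacle---is the necessity of the truncation. If one naively expanded $(u(x)-u(y))\phi(x)+u(x)(\phi(x)-\phi(y))$ in the difference of the two sides, one would obtain $2u(x)\phi(x)\,K(x,y)$ plus cross terms; the diagonal singularity of $K$ renders this non-absolutely integrable and blocks any direct application of Fubini. The annular truncation $\{|x-y|>\epsilon\}$ circumvents this by allowing the odd-kernel cancellation $\int_{|x-y|>\epsilon}K(x,y)\,dy=0$ to eliminate the singular pieces \emph{before} absolute integrability is tested.
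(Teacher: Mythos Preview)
Your argument is correct: the truncation makes the double integral absolutely convergent, the antisymmetry $K(y,x)=-K(x,y)$ yields the truncated identity via Fubini, and the uniform estimate $|D^s_\epsilon u - D^s u|\le c_{n,s}\|\nabla u\|_\infty\,\sigma_{n-1}\epsilon^{1-s}/(1-s)$ (and its analogue for $\diver^s_\epsilon\phi$) lets you pass to the limit against the compactly supported test functions. Note that the paper itself does not give a proof of this lemma; it merely cites \cite[Theorem~3.6]{BeCuMC} (as well as \cite{COMI2019,MeS,Silhavy2019}), and the argument you wrote is essentially the standard one found in those references.
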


We now extend Definition \ref{de:Ds} to a broader class of functions.

\begin{definicion}\label{de:Ds2}
	Let $0<s<1$ and $1 \leq p < \infty$.
	
	\begin{enumerate}[a)]
		\item
		Let $u \in L^p (\Rn)$ be such that there exists a sequence of $\{ u_j \}_{j \in \N} \subset C^{\infty}_c (\Rn)$ converging to $u$ in $L^p (\Rn)$ and for which $\{ D^s u_j \}_{j \in \N}$ is a Cauchy sequence in $L^p (\Rn, \Rn)$.
		We define $D^s u$ as the limit in $L^p (\Rn, \Rn)$ of $D^s u_j$ as $j \to \infty$.
		
		\item\label{item:divsu}
		Let $\phi \in L^p (\Rn, \Rn)$ be such that there exists a sequence of $\{ \phi_j \}_{j \in \N} \subset C^{\infty}_c (\Rn, \Rn)$ converging to $\phi$ in $L^p (\Rn, \Rn)$ and for which $\{ \diver^s \phi_j \}_{j \in \N}$ is a Cauchy sequence in $L^p (\Rn)$.
		We define $\diver^s \phi$ as the limit in $L^p (\Rn)$ of $\diver^s \phi_j$ as $j \to \infty$.
	\end{enumerate}
\end{definicion}

Of course, for a $u \in L^p (\Rn, \R^n)$, the definition of $D^s u$ is analogous taking into account \eqref{eq:Dsuvector}.

The following result shows that the above definitions of $D^s u$ and $\diver^s \phi$ are independent of the sequences $\{ u_j \}_{j \in \N}$ and $\{ \phi_j \}_{j \in \N}$, respectively, and of the exponent $p$.

\begin{lem}\label{le:Duswell}
	Let $0<s<1$ and $1 \leq p, q < \infty$.
	\begin{enumerate}[a)]
		\item\label{item:Dsuindependent}
		Let $u \in L^p (\Rn) \cap L^q (\Rn)$ be such that there exist sequences $\{ u_j \}_{j \in \N} $ and $\{ v_j \}_{j \in \N}$ in $C^{\infty}_c (\Rn)$ such that $u_j \to u$ in $L^p (\Rn)$ and $v_j \to u$ in $L^q (\Rn)$, and for which $\{ D^s u_j \}_{j \in \N}$ converges to some $U$ in $L^p (\Rn, \Rn)$ and $\{ D^s v_j \}_{j \in \N}$ converges to some $V$ in $L^q (\Rn, \Rn)$.
		Then $U=V$.
		
		\item\label{item:divsphiindependent}
		Let $\phi \in L^p (\Rn, \Rn) \cap L^q (\Rn, \Rn)$ be such that there exist sequences $\{ \phi_j \}_{j \in \N} $ and $\{ \theta_j \}_{j \in \N}$ in $C^{\infty}_c (\Rn, \Rn)$ such that $\phi_j \to u$ in $L^p (\Rn, \Rn)$ and $\theta_j \to u$ in $L^q (\Rn, \Rn)$, and for which $\{ \diver^s \phi_j \}_{j \in \N}$ converges to some $\Phi$ in $L^p (\Rn)$ and $\{ \diver^s \theta_j \}_{j \in \N}$ converges to some $\Theta$ in $L^q (\Rn, \Rn)$.
		Then $\Phi = \Theta$.
	\end{enumerate}
\end{lem}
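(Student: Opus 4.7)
The plan is to prove both assertions by pairing the candidate limits against smooth, compactly supported test functions via the nonlocal integration by parts of Lemma \ref{Integracion por partes NL}, and showing that both candidates produce the same pairing with every test function. The key enabling fact is that, by the excerpt, for any $\psi \in C^{\infty}_c(\Rn)$ and any $\phi \in C^{\infty}_c(\Rn,\Rn)$ one has $D^s \psi$ and $\diver^s \phi$ in $L^r$ for every $r \in [1,\infty]$, so they pair by Hölder with any function in any $L^p$ with $1 \le p < \infty$.

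For part \emph{\ref{item:Dsuindependent})}, fix an arbitrary $\phi \in C^{\infty}_c(\Rn,\Rn)$. Applying Lemma \ref{Integracion por partes NL} to $u_j$ and $\phi$ yields
\[
\int D^s u_j \cdot \phi \, dx = -\int u_j \, \diver^s \phi \, dx.
\]
Since $\phi \in L^{p'}(\Rn,\Rn)$ and $\diver^s \phi \in L^{p'}(\Rn)$, the left-hand side converges to $\int U \cdot \phi \, dx$ and the right-hand side to $-\int u \, \diver^s \phi \, dx$. The same argument applied to $v_j$ with the exponent $q$ gives $\int V \cdot \phi \, dx = -\int u \, \diver^s \phi \, dx$. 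Subtracting, $\int (U-V)\cdot \phi \, dx = 0$ for every $\phi \in C^{\infty}_c(\Rn,\Rn)$, and the fundamental lemma of the calculus of variations forces $U=V$ almost everywhere.

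For part \emph{\ref{item:divsphiindependent})}, the argument is symmetric: pick an arbitrary $u \in C^{\infty}_c(\Rn)$, for which $D^s u$ is defined by \eqref{eq:DsuCc} and lies in every $L^r(\Rn,\Rn)$. Lemma \ref{Integracion por partes NL} applied to $u$ and $\phi_j$ gives
\[
\int u \, \diver^s \phi_j \, dx = -\int D^s u \cdot \phi_j \, dx.
\]
Passing $j \to \infty$ (using $\phi_j \to \phi$ in $L^p$ with $D^s u \in L^{p'}$ on the right, and $\diver^s \phi_j \to \Phi$ in $L^p$ with $u \in L^{p'}$ on the left), one obtains $\int u \, \Phi \, dx = -\int D^s u \cdot \phi \, dx$. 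Repeating the same passage in the $L^q$ topology with $\theta_j$ produces the identical right-hand side with $\Theta$ in place of $\Phi$. Hence $\int u\,(\Phi - \Theta)\, dx = 0$ for every $u \in C^{\infty}_c(\Rn)$, so $\Phi = \Theta$ almost everywhere.

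There is no real obstacle here; the only point that requires any care is the legitimacy of transferring the duality identity to the $L^p$ limits, which relies on the cross-integrability property of $D^s$ and $\diver^s$ on $C^{\infty}_c$ recalled above. In particular, the extreme case $p=1$ is covered because $\diver^s \phi$ and $D^s u$ belong even to $L^\infty$ when $\phi$ and $u$ are smooth with compact support. Once this is in place, the proof consists only of one passage to the limit on each side of the duality identity.
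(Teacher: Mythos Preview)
Your proof is correct and follows essentially the same approach as the paper: test against an arbitrary $\phi \in C^{\infty}_c(\Rn,\Rn)$ (respectively $u \in C^{\infty}_c(\Rn)$), use the integration by parts of Lemma \ref{Integracion por partes NL}, and pass to the limit using that $\diver^s \phi$ (respectively $D^s u$) lies in every $L^r$. The paper carries out part \emph{\ref{item:Dsuindependent})} explicitly and declares \emph{\ref{item:divsphiindependent})} analogous, exactly as you do.
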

\begin{proof}
	We prove \emph{\ref{item:Dsuindependent})}, the proof of \emph{\ref{item:divsphiindependent})} being analogous.
	
	Let $\phi \in C^{\infty}_c (\Rn, \Rn)$.
	Then, by Lemma \ref{Integracion por partes NL},
	\[
	\int U \cdot \phi = \lim_{j \to \infty} \int D^s u_j \cdot \phi = - \lim_{j \to \infty} \int u_j \diver^s \phi = - \int u \diver^s \phi
	\]
	and, analogously,
	\[
	\int V \cdot \phi = - \int u \diver^s \phi .
	\]
	Thus, 
	\[
	\int U \cdot \phi = \int V \cdot \phi
	\]
	for all $\phi \in C^{\infty}_c (\Rn, \Rn)$, whence $U=V$.
\end{proof}

We end this subsection by showing two uniform bounds on the constant $c_{n,s}$ with respect to $s$.
We denote by $\omega_n$ the volume of the unit ball in $\R^n$.

\begin{lem}\label{Lemma: bound of constant c_{n,s}}
	Let $n \in \mathbb{N}$. Consider the function $c_{n,\cdot}:[-1,1] \rightarrow [0, \infty)$, defined as
	\[
	c_{n,s} = \begin{cases}
	\frac{\Gamma\left(\frac{n+s+1}{2}\right)}{\pi^{\frac{n}{2}}2^{-s}\Gamma\left( \frac{1-s}{2} \right)} & \text{if } -1 \leq s < 1 , \\
	0 & \text{if } s = 1 .
	\end{cases}
	\]
	Then
	\[
	\sup_{s \in [-1, 1]} c_{n,s}< \infty , \quad \sup_{s \in [-1, 1)}  \frac{c_{n,s}}{1-s}< \infty \quad \text{and} \quad \lim_{s\nearrow 1} \frac{c_{n,s}}{1-s} = \frac{1}{\omega_n}.
	\]
	%
\end{lem}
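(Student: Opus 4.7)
My plan is to reduce everything to the functional equation $\Gamma(x+1)=x\Gamma(x)$, applied to the denominator. The only place where $c_{n,s}$ can misbehave as a function of $s\in[-1,1]$ is at $s=1$, since $\Gamma\!\left(\frac{1-s}{2}\right)$ has a simple pole at $s=1$; elsewhere on $[-1,1]$, the gamma factors have arguments in a compact subset of $(0,\infty)$, so continuity and boundedness are automatic. The whole lemma is therefore about extracting the correct $(1-s)$ factor from $\Gamma\!\left(\frac{1-s}{2}\right)$.

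Concretely, the first step is to rewrite, for $s\in[-1,1)$,
\[
\Gamma\!\left(\tfrac{1-s}{2}\right) \;=\; \frac{2}{1-s}\,\Gamma\!\left(\tfrac{3-s}{2}\right),
\]
which is valid since $\frac{1-s}{2}\in(0,1]$ and $\Gamma(x+1)=x\Gamma(x)$. Substituting into the definition of $c_{n,s}$ yields
\[
\frac{c_{n,s}}{1-s} \;=\; \frac{\Gamma\!\left(\frac{n+s+1}{2}\right)}{2\,\pi^{n/2}\,2^{-s}\,\Gamma\!\left(\frac{3-s}{2}\right)}, \qquad s\in[-1,1).
\]
The right-hand side extends continuously to $s=1$, because for $s\in[-1,1]$ the arguments $\frac{n+s+1}{2}$ and $\frac{3-s}{2}$ stay in a compact subinterval of $(0,\infty)$ on which $\Gamma$ is continuous and strictly positive, and $2^{-s}$ is continuous too.

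The second step is just to evaluate this continuous extension at $s=1$:
\[
\lim_{s\nearrow 1}\frac{c_{n,s}}{1-s} \;=\; \frac{\Gamma\!\left(\frac{n+2}{2}\right)}{2\,\pi^{n/2}\,2^{-1}\,\Gamma(1)} \;=\; \frac{\Gamma\!\left(\frac{n}{2}+1\right)}{\pi^{n/2}} \;=\; \frac{1}{\omega_n},
\]
using the standard formula $\omega_n=\pi^{n/2}/\Gamma(n/2+1)$. This simultaneously proves the third conclusion and, by continuity of the extension on the compact interval $[-1,1]$, the second one.

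The first conclusion then comes for free: writing $c_{n,s}=(1-s)\cdot\frac{c_{n,s}}{1-s}$ on $[-1,1)$ and using that both factors are bounded on $[-1,1]$ (the first trivially, the second by step two), we obtain $\sup_{s\in[-1,1]} c_{n,s}<\infty$, where the value at $s=1$ is included since $c_{n,1}=0$ by definition. There is no real obstacle here; the only subtlety is making sure the functional equation is applied at values of the argument where it is valid (i.e.\ staying away from the pole of $\Gamma$ at $0$), which is why I restrict to $s\in[-1,1)$ before passing to the limit.
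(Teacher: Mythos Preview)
Your proof is correct and follows essentially the same approach as the paper: both apply the functional equation $z\Gamma(z)=\Gamma(z+1)$ with $z=\tfrac{1-s}{2}$ to rewrite $\tfrac{c_{n,s}}{1-s}$ as $\Gamma\!\left(\tfrac{n+s+1}{2}\right)\big/\bigl(\pi^{n/2}2^{1-s}\Gamma\!\left(\tfrac{3-s}{2}\right)\bigr)$, observe that this extends continuously to $[-1,1]$, and evaluate at $s=1$ to get $1/\omega_n$. The only cosmetic difference is that the paper first notes $c_{n,\cdot}$ is continuous at $s=1$ directly (since $\Gamma(z)\to\infty$ as $z\to0^+$), whereas you recover this at the end from the factorization $c_{n,s}=(1-s)\cdot\tfrac{c_{n,s}}{1-s}$.
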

\begin{proof}
	The function $c_{n, \cdot}$ is clearly continuous in $[-1,1)$.
	As $\Gamma(z)\rightarrow +\infty$ as $z\rightarrow 0^+$, we obtain that $c_{n, \cdot}$ is also continuous in $[-1,1]$.
	Now, using the property $z \Gamma(z) = \Gamma(z+1)$ for $z >0$, we find that
	\[
	\frac{c_{n,s}}{1-s} = \frac{\Gamma(\frac{n+s+1}{2})}{\pi^{\frac{n}{2}}2^{1-s}\Gamma(\frac{3-s}{2})} ,
	\]
	and, hence, the function $s \mapsto \frac{c_{n,s}}{1-s}$ is continuous in $[-1,1)$ and admits a continuous extension to $[-1,1]$.
	In fact, 
	\[
	\lim_{s\nearrow 1} \frac{c_{n,s}}{1-s} = \frac{\Gamma(1 + \frac{n}{2})}{\pi^{\frac{n}{2}}} = \frac{1}{\omega_n} .
	\]
	The conclusion follows.
\end{proof}

\subsection{Bessel spaces}

Let $0<s<1$ and $1 \leq p < \infty$.
For the sake of simplicity, we denote the norm in both $L^p(\Rn)$ and $L^p(\Rn,\Rn)$ by $\left\| \cdot \right\|_p$. Given $u \in C^{\infty}_c (\Rn)$, we define $\left\| \cdot \right\|_{H^{s,p}(\Rn)}$ as
\[
\left\| u \right\|_{H^{s,p}(\Rn)} = \left\| u \right\|_p + \left\| D^s u \right\|_p,
\]
which is easily seen to be a norm.
We define the space $H^{s,p}(\Rn)$ as the completion of $C^{\infty}_c (\Rn)$ under the norm $\left\| \cdot \right\|_{H^{s,p}(\Rn)}$, and extend accordingly the definition of $\left\| \cdot \right\|_{H^{s,p}(\Rn)}$ to $H^{s,p}(\Rn)$.
An analogous definition can be done for the space $H^{s,p} (\Rn, \R^n)$.

This is the definition given in \cite{ShS2015} (see also \cite{COMI2019}).
We leave for a future work the issue of whether this definition coincides with the set of $u \in L^p(\Rn)$ such that $D^s u$, as defined in \eqref{eq:Dsu}, exists a.e.\ and is in $L^p(\Rn, \Rn)$, which is the definition given in \cite{ShS2018}, without showing that both definitions are actually equivalent.

What was established in \cite[Th.\ 1.7]{ShS2015} is the remarkable fact of the identification of $H^{s,p}$ with the classical Bessel potential spaces (see \cite{Adams, Stein70, RunSi}).

For a $\phi \in H^{s,p} (\Rn, \Rn)$ there is a natural relation between $D^s \phi$ and $\diver^s \phi$.

\begin{lem}\label{le:div}
	Let $0 < s < 1$ and $1 \leq p < \infty$.
	Let $\phi \in H^{s,p} (\Rn, \Rn)$.
	Then $\diver^s \phi$ is well defined and $\tr D^s \phi = \diver^s \phi$ a.e.
\end{lem}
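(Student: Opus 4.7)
The plan is to reduce the statement to the smooth compactly supported case and then pass to the limit using the definitions given in Definition \ref{de:Ds2} and the approximation defining $H^{s,p}$.

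First, I would establish the pointwise identity $\tr D^s \phi = \diver^s \phi$ for $\phi \in C^\infty_c(\Rn,\Rn)$. From \eqref{eq:Dsuvector}, taking the trace of the tensor product commutes with the integral (both integrals are absolutely convergent by \cite[Lemma 3.1]{BeCuMC}), and since $\tr(a \otimes b) = a\cdot b$, one gets
\[
\tr D^s\phi(x) = c_{n,s}\int \frac{(\phi(x)-\phi(y))\cdot(x-y)}{|x-y|^{n+s+1}}\, dy.
\]
On the other hand, the odd-symmetry identity \eqref{eq:divabsconv} rewrites $\diver^s \phi(x)$ precisely as the same absolutely convergent integral, so the two coincide pointwise.

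Next, for a general $\phi \in H^{s,p}(\Rn,\Rn)$, by the definition of the completion there is a sequence $\{\phi_j\}_{j\in\N} \subset C^\infty_c(\Rn,\Rn)$ with $\phi_j \to \phi$ in $L^p(\Rn,\Rn)$ and $D^s\phi_j \to D^s\phi$ in $L^p(\Rn,\Rnn)$. Since the trace map $A \mapsto \tr A$ is a bounded linear operator on $\Rnn$, we have $\tr D^s\phi_j \to \tr D^s\phi$ in $L^p(\Rn)$. By the first step, $\diver^s \phi_j = \tr D^s\phi_j$ for every $j$, so $\{\diver^s\phi_j\}_{j\in\N}$ is a Cauchy sequence in $L^p(\Rn)$ converging to $\tr D^s\phi$.

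Finally, Definition \ref{de:Ds2}\,\emph{\ref{item:divsu})} applies to the sequence $\{\phi_j\}_{j\in\N}$, showing that $\diver^s\phi$ is well defined as an element of $L^p(\Rn)$, with $\diver^s\phi = \tr D^s\phi$ a.e. Independence of the approximating sequence is guaranteed by Lemma \ref{le:Duswell}\,\emph{\ref{item:divsphiindependent})}. I do not expect any serious obstacle: the only computation that requires care is the Fubini/symmetry step on the smooth representative, where one must note that the integrand in \eqref{eq:divabsconv} and in the trace of \eqref{eq:Dsuvector} are literally the same after pairing the vector $(x-y)/|x-y|$ with $\phi(x)-\phi(y)$.
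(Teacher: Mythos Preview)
Your proposal is correct and follows essentially the same approach as the paper's proof: establish $\tr D^s\phi_j=\diver^s\phi_j$ pointwise for $\phi_j\in C^\infty_c$ via the identity $\tr(a\otimes b)=a\cdot b$ and \eqref{eq:divabsconv}, then pass to the limit along an $H^{s,p}$-approximating sequence using the continuity of the trace and Definition \ref{de:Ds2}\,\emph{\ref{item:divsu})} together with Lemma \ref{le:Duswell}\,\emph{\ref{item:divsphiindependent})}. The only cosmetic difference is that the paper presents the approximation step first and the smooth computation second.
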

\begin{proof}
	Let $\{ \phi_j \}_{j \in \N} \subset C^{\infty}_c (\Rn, \Rn)$ be a sequence converging to $\phi$ in $L^p (\Rn, \Rn)$ such that $\{ D^s \phi_j \}_{j \in \N}$ converges to $D^s \phi$ in $L^p (\Rn, \Rnn)$.
	By linearity, $\tr D^s \phi_j \to \tr D^s \phi$ in $L^p (\Rn)$ as $j \to \infty$.
	In view of Definition \ref{de:Ds2}\,\emph{\ref{item:divsu})} and Lemma \ref{le:Duswell}\,\emph{\ref{item:divsphiindependent})}, it suffices to show that $\tr D^s \phi_j = \diver^s \phi_j$ for all $j \in \N$.
	Having in mind that the integrals of \eqref{eq:Dsuvector} and of the right hand side of \eqref{eq:divabsconv} are absolutely convergent, we obtain that
	\begin{align*}
	\tr D^s \phi_j (x) &= c_{n,s} \tr\left(\int  \frac{\phi_j(x) - \phi_j(y)}{|x-y|^{n+s}} \otimes \frac{x-y}{|x-y|}dy\right) = c_{n,s} \int \tr \left(\frac{\phi_j(x) - \phi_j(y)}{|x-y|^{n+s}} \otimes \frac{x-y}{|x-y|} \right) dy \\ 
	&= c_{n,s} \int \frac{\phi_j(x) - \phi_j(y)}{|x-y|^{n+s}} \cdot \frac{x-y}{|x-y|} dy = \diver^s \phi_j (x) ,
	\end{align*}
	which concludes the proof.
\end{proof}

The integration by parts formula of Lemma \ref{Integracion por partes NL} can be extended to $H^{s,p}$ as follows.

\begin{lem}\label{le:divgrad}
	Let $0<s<1$ and $1 < p < \infty$.
	Then, for all $u \in H^{s,p} (\Rn)$ and $\phi \in H^{s,p'} (\Rn, \Rn)$ we have
	\begin{equation*}
	\int D^s u(x) \cdot \phi(x) \, dx = -\int u(x) \diver^s \phi(x) \, dx.
	\end{equation*}
\end{lem}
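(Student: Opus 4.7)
The plan is to prove this by a straightforward density argument, mirroring the strategy used throughout the preliminaries. Fix approximating sequences $\{u_j\}_{j \in \N} \subset C^\infty_c(\R^n)$ with $u_j \to u$ in $H^{s,p}(\R^n)$ and $\{\phi_j\}_{j \in \N} \subset C^\infty_c(\R^n, \R^n)$ with $\phi_j \to \phi$ in $H^{s,p'}(\R^n, \R^n)$; these exist by the very definition of the Bessel spaces as completions. By Lemma \ref{Integracion por partes NL} applied to each pair, we have
\[
\int D^s u_j(x) \cdot \phi_j(x)\, dx = -\int u_j(x)\, \diver^s \phi_j(x)\, dx
\]
for every $j \in \N$, so the task reduces to passing to the limit on both sides.

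For the left-hand side, $D^s u_j \to D^s u$ in $L^p(\R^n, \R^n)$ and $\phi_j \to \phi$ in $L^{p'}(\R^n, \R^n)$, so H\"older's inequality (valid since $p>1$ ensures $p'<\infty$) gives convergence of the pairing to $\int D^s u \cdot \phi$. For the right-hand side, $u_j \to u$ in $L^p(\R^n)$, and I need $\diver^s \phi_j \to \diver^s \phi$ in $L^{p'}(\R^n)$. This is where Lemma \ref{le:div} is crucial: it asserts both that $\diver^s \phi$ is well defined for any $\phi \in H^{s,p'}$ and that the identity $\diver^s = \tr \circ D^s$ holds (a.e.\ for Bessel functions, and pointwise for smooth ones). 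Since the trace is a bounded linear map $\R^{n\times n} \to \R$, the convergence $D^s \phi_j \to D^s \phi$ in $L^{p'}(\R^n, \R^{n \times n})$ transfers to $\tr D^s \phi_j \to \tr D^s \phi$ in $L^{p'}(\R^n)$, hence $\diver^s \phi_j \to \diver^s \phi$ in $L^{p'}(\R^n)$. Another application of H\"older then yields convergence of the right-hand pairing to $-\int u\, \diver^s \phi$.

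There is no real obstacle here beyond the bookkeeping of the two approximations; the only point to verify with any care is that one may identify $\diver^s \phi_j$ with $\tr D^s \phi_j$ for the smooth approximants, which has already been established inside the proof of Lemma \ref{le:div} from the absolute convergence of the defining integrals. Combining the two limits concludes the proof.
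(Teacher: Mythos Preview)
Your proof is correct and follows essentially the same approach as the paper: approximate $u$ and $\phi$ by $C^\infty_c$ sequences, apply the smooth integration-by-parts formula (Lemma \ref{Integracion por partes NL}) termwise, and pass to the limit using the $L^p$--$L^{p'}$ convergences together with Lemma \ref{le:div} to handle $\diver^s \phi_j \to \diver^s \phi$. The only cosmetic difference is the order in which the convergences and the identity are written down.
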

\begin{proof}
	Let $\{ u_j \}_{j \in \N} \subset C^{\infty}_c (\Rn)$ be a sequence converging to $u$ in $H^{s,p} (\Rn)$, and let $\{ \phi_j \}_{j \in \N} \subset C^{\infty}_c (\Rn, \Rn)$ be a sequence converging to $\phi$ in $H^{s,p'} (\Rn, \Rn)$.
	Then the following convergences hold as $j \to \infty$:
	\begin{align*}
	& u_j \to u \text{ in } L^p (\Rn), \quad D^s u_j \to D^s u \text{ in } L^p (\Rn, \Rn), \quad \phi_j \to \phi \text{ in } L^{p'} (\Rn, \Rn), \\
	& D^s \phi_j \to D^s \phi \text{ in } L^{p'} (\Rn, \Rnn) , \quad \tr D^s \phi_j \to \tr D^s \phi \text{ in } L^{p'} (\Rn) , \quad \diver^s \phi_j \to \diver^s \phi \text{ in } L^{p'} (\Rn) ,
	\end{align*}
	the last convegence due to Lemma \ref{le:div}.
	As a consequence,
	\begin{equation}\label{eq:DsdivL1}
	D^s u_j \cdot \phi_j \to D^s u \cdot \phi \quad \text{and} \quad u_j \diver^s \phi_j \to u \diver^s \phi \qquad \text{in } L^1 (\Rn) .
	\end{equation}
	By Lemma \ref{Integracion por partes NL}, for each $j \in \N$,
	\[
	\int D^s u_j (x) \cdot \phi_j (x) \, dx = -\int u_j (x) \diver^s \phi_j(x) \, dx .
	\]
	This equality and the convergences \eqref{eq:DsdivL1} readily imply the conclusion.
\end{proof}



\subsection{Embeddings of Bessel spaces}

It is known that, for $0<s<1$, the space $W^{1,p}(\mathbb{R}^n)$ embeds continuosly into $H^{s,p}(\mathbb{R}^n)$ \cite[Ch.\ 7]{Adams}. In the next result we prove again this result but writing the dependence of the embedding constant with respect to $s$.

\begin{prop} \label{Lemma: inequality with W1p}
Let $1 \leq p < \infty$. Then, there exists a constant $C=C(n,p)>0$ such that for all $u \in W^{1,p}(\mathbb{R}^n)$ and $0<s<1$,
	\begin{equation*}
	\lVert D^su  \rVert_p \leq \frac{C}{s} \lVert u\rVert_{W^{1,p}(\mathbb{R}^n)},
	\end{equation*}
\end{prop}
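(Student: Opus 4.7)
The plan is to prove the inequality first for $u \in C_c^{\infty}(\Rn)$ and then extend it to $W^{1,p}(\Rn)$ by density. For smooth $u$ the idea is to split the integral defining $D^s u$ into a ``near" and ``far" part according to $|x-y|\le 1$ or $|x-y|>1$, bound each by a convolution with an $L^1$ kernel, and then use Young's inequality together with the uniform bounds on $c_{n,s}$ provided by Lemma~\ref{Lemma: bound of constant c_{n,s}}.

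For the far part, since $\left|\frac{x-y}{|x-y|}\right|=1$, the triangle inequality bounds the contribution pointwise by
\[
c_{n,s}|u(x)|\int_{|z|>1}|z|^{-n-s}\,dz + c_{n,s}\,(K^{\mathrm{far}}_s * |u|)(x),
\]
where $K^{\mathrm{far}}_s(z)=|z|^{-n-s}\chi_{|z|>1}$. Since $\|K^{\mathrm{far}}_s\|_1 = n\omega_n/s$, Young's inequality yields the bound $c_{n,s}\,\frac{C}{s}\|u\|_p$, which by the boundedness of $c_{n,s}$ is $\frac{C}{s}\|u\|_p$. For the near part, I would apply the fundamental theorem of calculus
\[
u(x)-u(y)=\int_0^1 Du(y+t(x-y))\cdot(x-y)\,dt,
\]
so that the factor $\frac{x-y}{|x-y|^{n+s+1}}\cdot(x-y)$ gives the pointwise bound
\[
c_{n,s}\int_{|x-y|\le 1}\frac{1}{|x-y|^{n+s-1}}\int_0^1 |Du(y+t(x-y))|\,dt\,dy.
\]
Performing the changes of variables $z=x-y$ and then $w=(1-t)z$ (with $\tau=1-t$) and switching the order of integration, one arrives at $c_{n,s}\,(K^{\mathrm{near}}_s * |Du|)(x)$ up to the constant $\int_{|w|}^{1}\tau^{s-1}\,d\tau\le 1/s$, where $K^{\mathrm{near}}_s(w)=|w|^{-(n+s-1)}\chi_{|w|\le 1}$. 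A direct computation shows $\|K^{\mathrm{near}}_s\|_1 = n\omega_n/(1-s)$, so Young's inequality gives a bound of the form
\[
\frac{c_{n,s}}{s(1-s)}\,C\,\|Du\|_p,
\]
and by the second assertion of Lemma~\ref{Lemma: bound of constant c_{n,s}} this reduces to $\frac{C}{s}\|Du\|_p$. Adding the two contributions yields the desired estimate for $u\in C^\infty_c(\Rn)$.

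The extension to $u\in W^{1,p}(\Rn)$ is then a density argument: take $\{u_j\}\subset C^\infty_c(\Rn)$ with $u_j\to u$ in $W^{1,p}(\Rn)$, apply the smooth-case inequality to $u_j-u_k$ to conclude that $\{D^s u_j\}$ is Cauchy in $L^p(\Rn,\Rn)$, and identify its limit with $D^s u$ in the sense of Definition~\ref{de:Ds2} (Lemma~\ref{le:Duswell} ensures this limit is unambiguous). Passing to the limit in the smooth estimate completes the proof. The main technical point to control carefully is the bookkeeping of the constants in $s$: one must verify that the factor from the near kernel $\sim 1/(1-s)$ is exactly cancelled by the factor $c_{n,s}\sim (1-s)$, and that the $1/s$ coming from $\int \tau^{s-1}d\tau$ and from $\int_{|z|>1}|z|^{-n-s}dz$ is the only remaining singular factor, matching the stated dependence $C/s$.
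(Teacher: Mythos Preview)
Your proof is correct and follows essentially the same strategy as the paper: density reduction to $C_c^\infty$, the same near/far split at $|x-y|=1$, the same treatment of the far part by convolution with an $L^1$ kernel, and the same use of Lemma~\ref{Lemma: bound of constant c_{n,s}} to absorb the factor $1/(1-s)$. The only minor difference is in the near part, where the paper applies Minkowski's integral inequality together with the translation estimate $\|u(\cdot+h)-u\|_p\le|h|\,\|Du\|_p$ (yielding $\|A\|_p\le\frac{\sigma_{n-1}}{1-s}\|Du\|_p$ with no $1/s$), whereas you use the fundamental theorem of calculus and a change of variables, picking up an additional harmless factor $1/s$ from $\int_{|w|}^1\tau^{s-1}d\tau$.
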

\begin{proof}
	By density, it is enough to prove the inequality for $u \in C^{\infty}_c (\Rn)$.
	For all $x \in \Rn$,
	\begin{equation}\label{eq:paso1}
	\left| D^s u (x) \right| \leq c_{n,s}  \left( A(x) + B(x) \right)
	\end{equation}
	with
	\[
	A (x) := \int_{B(x,1)} \frac{|u(x)-u(y)|}{|x-y|^{n+s}} \, dy , \qquad B (x) := \int_{B(x,1)^c} \frac{|u(x)-u(y)|}{|x-y|^{n+s}} \, dy ,
	\]
	so
	\begin{equation*}
	\left\| D^s u \right\|_p \leq c_{n,s} \left( \left\| A \right\|_p + \left\| B \right\|_p \right) .
	\end{equation*}
	Note that
	\begin{equation*}
	A (x) = \int_{B(0,1)} \frac{\left| u(x+h)-u(x) \right|}{|h|^{n+s}} dh , \qquad B (x) =  \int_{B(0,1)^c} \frac{\left| u(x+h)-u(x) \right|}{|h|^{n+s}} dh .
	\end{equation*}
	Applying Minkowski's integral inequality (see, e.g., \cite[App.\ A.1]{Stein70}) we obtain
	\begin{equation*}
	\left\| A \right\|_p \leq \int_{B(0,1)} \left( \int \frac{|u(x+h)-u(x)|^p}{|h|^{(n+s)p}} \, dx \right)^{\frac{1}{p}} \, dh .
	\end{equation*}
	Now, for all $h \in B(0,1) \setminus \{ 0 \}$,
	\[
	\left( \int \frac{|u(x+h)-u(x)|^p}{|h|^{(n+s)p}} \, dx \right)^{\frac{1}{p}} = \frac{1}{|h|^{n+s}} \left(\int \left| u(x+h)-u(x) \right|^p  dx\right)^{\frac{1}{p}} \leq \frac{1}{|h|^{n+s-1}} \left\| D u \right\|_p ,
	\]
	thanks to a classic inequality (see, e.g., \cite[Prop.\ 9.3]{Brezis} and notice that it is still valid for $p=1$).
	Therefore,
	\begin{equation}\label{eq:Ap}
	\left\| A \right\|_p \leq \left\| D u \right\|_p \int_{B(0,1)} \frac{1}{|h|^{n+s-1}} = \frac{\sigma_{n-1}}{1-s} \left\| D u \right\|_p ,
	\end{equation}
	where $\sigma_{n-1}$ is the area of the unit sphere of $\Rn$.
	
	As for $B$, we first notice that for all $x \in \Rn$, by H\"older's inequality
	\begin{equation*}
	\begin{split}
	B(x) & \leq \int_{B(0,1)^c} \frac{\left| u(x+h) \right|}{|h|^{n+s}} dh + \int_{B(0,1)^c} \frac{\left| u(x) \right|}{|h|^{n+s}} dh \\
	& \leq \left( \int_{B(0,1)^c} \frac{\left| u(x+h) \right|^p}{|h|^{n+s}} dh \right)^{\frac{1}{p}} \left( \int_{B(0,1)^c} \frac{1}{|h|^{n+s}} dh \right)^{\frac{1}{p'}} + \left| u(x) \right| \int_{B(0,1)^c} \frac{1}{|h|^{n+s}} dh \\
	& = \left( \int_{B(0,1)^c} \frac{\left| u(x+h) \right|^p}{|h|^{n+s}} dh \right)^{\frac{1}{p}} \left( \frac{\sigma_{n-1}}{s} \right)^{\frac{1}{p'}} + \left| u(x) \right| \frac{\sigma_{n-1}}{s} ,
	\end{split}
	\end{equation*}
	so, by Fubini's theorem,
	\begin{equation}\label{eq:Bp}
	\left\| B \right\|_p \leq \left( \frac{\sigma_{n-1}}{s} \right)^{\frac{1}{p'}} \left( \int \int_{B(0,1)^c} \frac{\left| u(x+h) \right|^p}{|h|^{n+s}} dh \, dx \right)^{\frac{1}{p}} + \frac{\sigma_{n-1}}{s} \left\| u \right\|_p = 2 \frac{\sigma_{n-1}}{s} \left\| u \right\|_p .
	\end{equation}
	Putting together \eqref{eq:paso1}, \eqref{eq:Ap} and \eqref{eq:Bp}, we obtain
	\begin{equation*}
	\left\| D^s u \right\|_p \leq c_{n,s} \sigma_{n-1}  \left( \frac{1}{1-s} \left\| Du \right\|_p + \frac{2}{s} \left\| u \right\|_p \right) 
	\end{equation*}
	and, thanks to Lemma \ref{Lemma: bound of constant c_{n,s}}, the proof is finished.
\end{proof}


Given an open set $\O\subset \Rn$ we define the subspace $H^{s,p}_0 (\O)$ as the closure of $C^{\infty}_c (\O)$ in $H^{s,p} (\Rn)$; of course, with this definition we are extending each function of $C^{\infty}_c (\O)$ by zero in $\O^c$.
In addition, given $g \in H^{s,p} (\Rn)$ we define the affine subspace $H^{s,p}_g (\Rn)$ as $g + H^{s,p}_0 (\O)$.
The affine subspace $W_g^{1,p}(\Omega)$ is defined in a similar way for $g \in W^{1,p} (\Rn)$.
We note that in \cite{ShS2018} the space $H_g^{s,p}(\O)$ was defined as the set of $u\in H^{s,p} (\Rn)$ such that $u=g$ in $\O^c$.
We leave for a future work the issue of the equality of both definitions, but we note that, for a $u \in H^{s,p}_g (\O)$ according to our definition, we trivially have that $u=g$ in $\O^c$.
In particular, the following compact embedding of $H^{s,p}_g (\O)$ into $L^q (\Rn)$ (see \cite[Th.\ 2.2]{ShS2018}; the formulation is adapted from \cite[Th.\ 2.3]{BeCuMC}) remains true.
In what follows we set $p_s^*=\frac{pn}{n-sp}$.

\begin{teo} \label{Bessel embedding theorem}
	Set $0<s<1$ and $1<p<\infty$. Let $\Omega\subset \mathbb{R}^n$ be open and bounded and $g \in H^{s,p}(\mathbb{R}^n)$. Then for any sequence $\{u_j \}_{j \in \N} \subset  H_{g}^{s,p}(\Omega)$
	such that
	\begin{equation*}
	u_j \rightharpoonup u \quad \text{in } H^{s,p} (\Rn),
	\end{equation*}
	for some $u \in H^{s,p} (\Rn)$, one has $u \in H^{s,p}_g (\O)$ and
\begin{enumerate}[a)]
\item $u_j - g \rightarrow u - g$ in $L^q (\Rn)$ for every $q$ satisfying
	\[
	\begin{cases}
	q\in \left[1, p_s^* \right) & \text{if } sp<n , \\
	q\in[1, \infty) & \text{if } sp = n , \\
	q\in[1, \infty] & \text{if } sp > n ,
	\end{cases}
	\]

\item $u_j \to u$ in $L^q (\Rn)$ for every $q$ satisfying
	\[
	\begin{cases}
	q\in \left[p, p_s^* \right) & \text{if } sp<n , \\
	q\in[p, \infty) & \text{if } sp = n , \\
	q\in[p, \infty] & \text{if } sp > n .
	\end{cases}
	\]
\end{enumerate}
\end{teo}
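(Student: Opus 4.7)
The plan is to reduce the problem to the zero-boundary family $v_j := u_j - g \in H^{s,p}_0(\O)$, which satisfies $v_j \weakc v := u - g$ in $H^{s,p}(\Rn)$. Since $H^{s,p}_0(\O)$ is a norm-closed convex subspace of $H^{s,p}(\Rn)$, it is weakly closed by Mazur's theorem, so $v \in H^{s,p}_0(\O)$, which is precisely the statement $u \in H^{s,p}_g(\O)$. Moreover, each $v_j$ is supported in $\overline{\O}$, being an $H^{s,p}$-limit (hence $L^p$-limit) of functions in $C^\infty_c(\O)$, and the same holds for $v$.

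For part a) I would verify the Fr\'echet--Kolmogorov--Riesz compactness criterion for the family $\{v_j\}$ in $L^q(\Rn)$. Uniform $L^q$ boundedness follows from the fractional Sobolev embedding of $H^{s,p}(\Rn)$ into $L^{p_s^*}(\Rn)$ in the subcritical case (with the analogous embeddings into $L^r$ for every finite $r$ or into $L^\infty$ in the critical/supercritical cases), together with the uniformly bounded support in $\overline{\O}$, which allows one to interpolate all the way down to $L^1$. Uniform tightness at infinity is immediate from the support condition. The crucial point is uniform equicontinuity of translations: here I invoke the identification of $H^{s,p}$ with the classical Bessel potential space \cite[Th.\ 1.7]{ShS2015}, writing $v_j = G_s \ast f_j$ with $\{f_j\}$ bounded in $L^p(\Rn)$ and $G_s \in L^1(\Rn)$ the Bessel kernel. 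Then $\tau_h v_j - v_j = (\tau_h G_s - G_s)\ast f_j$, and the strong continuity of translations on $L^1(\Rn)$ together with Young's convolution inequality yield $\|\tau_h v_j - v_j\|_p \to 0$ uniformly in $j$ as $|h|\to 0$; interpolation with the uniform $L^{p_s^*}$ bound then transfers this equicontinuity to every $q$ in the prescribed range. Precompactness of $\{v_j\}$ in $L^q(\Rn)$, combined with $v_j \weakc v$ in $L^q$ (inherited from the $H^{s,p}$ weak convergence via the Sobolev embedding), identifies the strong limit as $v$, and a standard subsequence-extraction argument forces the whole sequence to converge strongly.

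Part b) is then a direct corollary: $u_j - u = v_j - v$ pointwise, so $\|u_j - u\|_q = \|v_j - v\|_q \to 0$; the restriction $q \geq p$ appears only to guarantee that $u_j$ and $u$ themselves lie in $L^q(\Rn)$ (which they do as elements of $H^{s,p}(\Rn)$), making the statement meaningful. The main obstacle is the uniform equicontinuity step, because the abstract completion-based definition of $H^{s,p}$ does not directly provide control over translations: invoking the Bessel potential representation is essential, and one must additionally handle the three regimes $sp<n$, $sp=n$, $sp>n$ separately in the embedding part so that the $L^1$-to-$L^{p_s^*}$ interpolation covers the full stated range of $q$.
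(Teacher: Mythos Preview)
The paper does not supply its own proof of this theorem: it is quoted as a known compact embedding, with the formulation taken from \cite[Th.\ 2.3]{BeCuMC} and the underlying result from \cite[Th.\ 2.2]{ShS2018}. There is therefore no in-text argument to compare your proposal against.

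That said, your proof outline is sound. The reduction to $v_j = u_j - g \in H^{s,p}_0(\O)$, weak closedness via Mazur, the observation that $\supp v_j \subset \overline{\O}$, and the Fr\'echet--Kolmogorov--Riesz verification using the Bessel potential representation $v_j = G_s * f_j$ with $G_s \in L^1(\Rn)$ and $\{f_j\}$ bounded in $L^p$ are all correct and constitute a standard route to compactness in this setting. One minor simplification: you do not need to run the compactness criterion separately for each exponent $q$. It suffices to establish strong convergence in $L^p$ once (via your translation-equicontinuity argument, which is naturally an $L^p$ statement), and then deduce the other exponents directly: for $q \in [1,p)$ use H\"older on the fixed bounded support $\overline{\O}$, and for $q \in (p, p_s^*)$ (respectively $q \in (p,\infty)$ or $q \in (p,\infty]$ in the other regimes) interpolate the $L^p$ strong convergence against the uniform Sobolev bound in $L^{p_s^*}$ (respectively $L^r$ for all finite $r$, or $L^\infty$). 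This also makes the identification of the limit cleaner, since weak convergence $v_j \weakc v$ is immediately available in $L^p$ from the $H^{s,p}$ weak convergence, whereas for $q<p$ the embedding $H^{s,p}(\Rn) \hookrightarrow L^q(\Rn)$ fails and one must pass through the compact support to make sense of that step.
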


\subsection{Poincar\'e-Sobolev inequality for $H_0^{s,p}(\O)$}

In this section we prove the Poincar\'e-Sobolev inequality in $H^{s,p}$.
This result is known (see \cite[Th.\ 1.8]{ShS2015}), but for the analysis of this work it is crucial to trace the dependence of the Poincar\'e-Sobolev constant on $s$, in the case of bounded domains.
The proof we provide uses some ideas of \cite[Lemma 7.12]{GiTr01}.

\begin{teo}\label{th:Poincare}
	Let $\O \subset \Rn$ be open and bounded.
	Then there exists $C = C (n,\O)$ such that for all $0 < s < 1$, $1 < p < \infty$ and
	$u \in H^{s,p}_0 (\O)$,
	\[
	\left\| u \right\|_{L^p (\O)} \leq \frac{C}{s} \left\| D^s u \right\|_p .
	\]
\end{teo}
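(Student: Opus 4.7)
The plan is to adapt the Gilbarg--Trudinger strategy for the classical Sobolev Poincar\'e inequality to the fractional setting. By density of $C^{\infty}_c(\Omega)$ in $H^{s,p}_0(\Omega)$, it suffices to establish the bound for $u \in C^{\infty}_c(\Omega)$. The starting point is the fractional fundamental theorem of Calculus (Theorem \ref{Th: Fractional Fundamental Theorem of Calculus}), which yields the pointwise inequality
\[
|u(x)| \leq c_{n,-s} \int \frac{|D^s u(y)|}{|x-y|^{n-s}}\, dy, \qquad x \in \Rn.
\]
The right-hand side is a Riesz-potential-type expression applied to $|D^s u|$, and the main task is to bound its $L^p(\Omega)$-norm. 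A direct use of Hardy--Littlewood--Sobolev would only cover $sp < n$; I instead use a truncation argument, in the spirit of \cite[Lemma 7.12]{GiTr01}, which handles every $1 < p < \infty$ and $0 < s < 1$ uniformly.

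Fix a threshold $R > 2\diam(\Omega)$ (to be chosen later) and split the integral into a near part $\{|x-y|<R\}$ and a far part $\{|x-y|\geq R\}$. The near part is estimated by Young's convolution inequality with the $L^1$ kernel $\mathbf{1}_{B(0,R)}(z)\,|z|^{-(n-s)}$, whose $L^1$-norm is $\sigma_{n-1} R^s/s$, giving
\[
\left\| \int_{B(\cdot, R)} \frac{|D^s u(y)|}{|\cdot - y|^{n-s}}\, dy \right\|_{L^p(\Omega)} \leq \frac{\sigma_{n-1} R^s}{s} \|D^s u\|_p .
\]
For the far part, the choice $R > 2\diam(\Omega)$ forces $y\notin\Omega$, hence $u(y)=0$, whenever $x\in\Omega$ and $|x-y|\geq R$. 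Substituting $u(y)=0$ into the defining formula \eqref{eq:DsuCc} and using $|y-z|\geq \tfrac{1}{2}|x-y|$ for $z\in\Omega$ yields the pointwise bound
\[
|D^s u(y)| \leq c_{n,s}\,2^{n+s}|x-y|^{-(n+s)} \|u\|_{L^1(\Omega)} \leq C(n)\, c_{n,s}\,|x-y|^{-(n+s)} |\Omega|^{1/p'}\|u\|_{L^p(\Omega)}.
\]
A short radial computation then gives, uniformly in $x\in\Omega$,
\[
\int_{|x-y|\geq R} \frac{|D^s u(y)|}{|x-y|^{n-s}}\, dy \leq \frac{C'(n)\, c_{n,s}\, |\Omega|^{1/p'}}{R^{n}}\, \|u\|_{L^p(\Omega)} .
\]

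Combining the two contributions, invoking the uniform bounds on $c_{n,\pm s}$ from Lemma \ref{Lemma: bound of constant c_{n,s}}, and using that $|\Omega|^{1/p'+1/p}=|\Omega|$ and $R^s\leq \max(1,R)$ for $s\in(0,1)$, one arrives at an inequality of the form
\[
\|u\|_{L^p(\Omega)} \leq \frac{A(n,\Omega)}{s}\|D^s u\|_p + \frac{B(n,\Omega)}{R^{n}}\|u\|_{L^p(\Omega)},
\]
with $A,B$ independent of $s$ and $p$. Choosing $R$ large enough (depending only on $n$ and $\Omega$) so that $B(n,\Omega)/R^{n}\leq 1/2$, while keeping $R>2\diam(\Omega)$, allows the $\|u\|_{L^p(\Omega)}$-term to be absorbed into the left-hand side, producing the desired bound with constant $C(n,\Omega)/s$. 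The main obstacle is precisely the far-part estimate: since $D^s u$ fails to have compact support, the usual convolution bounds of GT type do not apply directly, and Hardy--Littlewood--Sobolev imposes the restriction $sp<n$. The decisive observation is that outside $\Omega$ the value $D^s u(y)$ admits an explicit convergent representation in terms of $u$, which simultaneously produces the required decay in $|x-y|$ and the factor $\|u\|_{L^p(\Omega)}$ that can be absorbed.
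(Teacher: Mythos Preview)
Your proposal is correct and follows essentially the same route as the paper's own proof: start from the fractional fundamental theorem of Calculus, split the Riesz-type integral into a near part and a far part, bound the near part by an $L^1$-kernel argument yielding $\frac{C R^s}{s}\|D^s u\|_p$, bound the far part via the explicit formula for $D^s u(y)$ when $u(y)=0$ to obtain a term $C R^{-n}\|u\|_{L^p(\Omega)}$, and finally choose $R$ large (depending only on $n,\Omega$) to absorb. The only cosmetic differences are that the paper splits according to a fixed ball $\Omega_1=B(0,2R)\supset\Omega$ rather than the moving ball $B(x,R)$, and uses H\"older plus Fubini instead of your (slightly cleaner) Young convolution inequality for the near part; both variants produce the same estimates.
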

\begin{proof}
	By density, it is enough to prove the inequality for $u \in C^{\infty}_c (\O)$.
	Let $R \in \R$, to be specified later, such that  
	\begin{equation}\label{eq:R}
	R \geq 1 , \qquad \O \subset B (0, R) .
	\end{equation}
	Define $\O_1 := B (0, 2 R)$.
	
	Fix $x \in \O$.
	By Theorem \ref{Th: Fractional Fundamental Theorem of Calculus} and Lemma \ref{Lemma: bound of constant c_{n,s}},
	\begin{equation}\label{eq:upoint}
	\left| u (x) \right| \leq C(n) \left[ \int_{\O_1} \frac{|D^s u(y)|}{|x-y|^{n-s}} \, dy + \int_{\O_1^c} \frac{|D^s u(y)|}{|x-y|^{n-s}} \, dy \right] .
	\end{equation}
	Now $\O_1 \subset B (x, 3 R)$, so
	\begin{equation}\label{eq:x-yn-s}
	\int_{\O_1} \frac{1}{|x-y|^{n-s}} \, dy \leq \int_{B(x, 3 R)} \frac{1}{|x-y|^{n-s}} \, dy = \frac{\sigma_{n-1}}{s} \left( 3 R \right)^s \leq C(n) \frac{1}{s} R .
	\end{equation}
	Similarly, $\O \subset B (y, 3 R)$ for every $y \in \O_1$, so
	\begin{equation}\label{eq:x-yn-sO}
	\int_{\O} \frac{1}{|x-y|^{n-s}} \, dx \leq C(n) \frac{1}{s} R .
	\end{equation}
	By \eqref{eq:x-yn-s} and H\"older's inequality,
	\[
	\int_{\O_1} \frac{|D^s u(y)|}{|x-y|^{n-s}} \, dy \leq \left[ C(n) \frac{1}{s} R \right]^{\frac{1}{p'}} \left( \int_{\O_1} \frac{|D^s u(y)|^p }{|x-y|^{n-s}} \, dy \right)^{\frac{1}{p}} .
	\]
	Therefore, using \eqref{eq:x-yn-sO}, we find
	\begin{equation}\label{eq:DsuOO1}
	\begin{split}
	\left[ \int_{\O} \left( \int_{\O_1} \frac{|D^s u(y)|}{|x-y|^{n-s}} \, dy \right)^p dx \right]^{\frac{1}{p}} & \leq \left[ C(n) \frac{1}{s} R \right]^{\frac{1}{p'}} \left( \int_{\O_1} |D^s u (y)|^p \int_{\O} \frac{1}{|x-y|^{n-s}} \, dx \, dy \right)^{\frac{1}{p}} \\
	& \leq C(n) \frac{1}{s} R \left\| D^s u \right\|_p .
	\end{split}
	\end{equation}
	
	Now, for any $y \in \O_1^c$, by Lemma \ref{Lemma: bound of constant c_{n,s}},
	\begin{equation}\label{eq:Dsuy}
	\left| D^s u (y) \right| \leq C(n) \int \frac{\left| u(y) - u(z) \right|}{\left| y - z \right|^{n+s}} \, dz = C(n) \int_{\O} \frac{\left| u(z) \right|}{\left| y - z \right|^{n+s}} \, dz .
	\end{equation}
	When $z \in \O$ we have
	\[
	|y| \leq |y-z| + |z| \leq |y-z| + R \leq |y-z| + \frac{1}{2} |y| ,
	\]
	so $\frac{1}{2} |y| \leq |y-z|$ and, hence,
	\begin{equation}\label{eq:y-z}
	\frac{1}{|y-z|^{n+s}} \leq \left( \frac{2}{|y|} \right)^{n+s} \leq C(n) \frac{1}{|y|^{n+s}} .
	\end{equation}
	Similarly, for each $x \in \O$ we have
	\begin{equation}\label{eq:x-y}
	\frac{1}{|x-y|^{n-s}} \leq C(n) \frac{1}{|y|^{n-s}} .
	\end{equation}
	Using \eqref{eq:y-z} we find that
	\[
	\int_{\O} \frac{\left| u(z) \right|}{\left| y - z \right|^{n+s}} \, dz \leq C(n) \frac{1}{|y|^{n+s}} \left\| u \right\|_{L^1 (\O)} \leq C(n) |\O|^{\frac{1}{p'}} \frac{1}{|y|^{n+s}} \left\| u \right\|_{L^p (\O)} ,
	\]
	whence we infer from \eqref{eq:Dsuy} that
	\begin{equation} \label{eq: fr gradient bounded by Lp norm of u}
	\left| D^s u (y) \right| \leq C(n) |\O|^{\frac{1}{p'}} \frac{1}{|y|^{n+s}} \left\| u \right\|_{L^p (\O)} .
	\end{equation}
	Thus, using \eqref{eq:x-y} as well,
	\[
	\int_{\O_1^c} \frac{|D^s u(y)|}{|x-y|^{n-s}} \, dy \leq C(n) |\O|^{\frac{1}{p'}} \left\| u \right\|_{L^p (\O)} \int_{\O_1^c} \frac{1}{|y|^{n+s}} \frac{1}{|y|^{n-s}} \, dy = C(n) |\O|^{\frac{1}{p'}} R^{-n} \left\| u \right\|_{L^p (\O)} .
	\]
	This last inequality, combined with \eqref{eq:upoint} and \eqref{eq:DsuOO1}, implies by the triangular inequality that
	\begin{align*}
	\left\| u \right\|_{L^p (\O)} & \leq C(n) \frac{1}{s} R \left\| D^s u \right\|_p + C_1 (n) |\O|^{\frac{2}{p'}} R^{-n} \left\| u \right\|_{L^p (\O)} \\
	& \leq C(n) \frac{1}{s} R \left\| D^s u \right\|_p + C_1 (n) \max \{ 1, |\O|^2 \} R^{-n} \left\| u \right\|_{L^p (\O)} .
	\end{align*}
	Finally, we choose $R$ such that, in addition to \eqref{eq:R}, satisfies $C_1 (n) \max \{ 1, |\O|^2 \} R^{-n} \leq \frac{1}{2}$, so that $R$ depends on $n$ and $\O$.
	We obtain that
	\[
	\frac{1}{2} \left\| u \right\|_{L^p (\O)} \leq C(n) \frac{1}{s} R \left\| D^s u \right\|_p
	\]
	and concludes the proof.
\end{proof}

We will use the following immediate consequence of Theorem \ref{th:Poincare}.

\begin{cor} \label{cor: fractional Sobolev inequality definitiva}
	Let $\O \subset \Rn$ be open and bounded, and let $0 < s_0 < 1$.
	Then there exists $C = C (n,\O, s_0)$ such that for all $s_0 < s < 1$, $1 < p < \infty$ and
	$u \in H^{s,p}_0 (\O)$,
	\[
	\left\| u \right\|_{L^p (\O)} \leq C \left\| D^s u \right\|_p .
	\]
\end{cor}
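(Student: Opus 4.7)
The plan is simply to invoke Theorem \ref{th:Poincare} and absorb the $s$-dependence into the constant using the restriction $s > s_0$. First I would apply Theorem \ref{th:Poincare} to obtain a constant $C_0 = C_0(n, \Omega)$ such that
\[
\left\| u \right\|_{L^p (\O)} \leq \frac{C_0}{s} \left\| D^s u \right\|_p
\]
for every $s \in (0,1)$, every $p \in (1, \infty)$ and every $u \in H^{s,p}_0 (\O)$. Next, since we now restrict to $s > s_0$, we have $\frac{1}{s} < \frac{1}{s_0}$, so defining $C := \frac{C_0}{s_0}$, which depends only on $n$, $\O$ and $s_0$, yields the desired inequality.

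There is no real obstacle here; the corollary is a direct bookkeeping consequence of the uniform Poincar\'e inequality from Theorem \ref{th:Poincare}. The whole point of having already traced the dependence of the Poincar\'e constant on $s$ is that, once $s$ is bounded away from $0$, the right-hand side becomes uniform in $s$, which is exactly the form needed for the $\Gamma$-convergence analysis as $s \nearrow 1$.
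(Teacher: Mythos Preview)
Your proof is correct and matches the paper's approach exactly: the paper simply states that this corollary is an immediate consequence of Theorem \ref{th:Poincare}, and your argument of bounding $\frac{1}{s}$ by $\frac{1}{s_0}$ is precisely the intended one-line deduction.
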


 \section[Localization of fractional gradients]{Localization of fractional gradients}\label{se:localization}

In this section we prove the convergence of the $s$-fractional gradient of a $W^{1,p}$ function to its local gradient as $s \nearrow 1$. This result is to be expected, and easy to obtain for smooth functions using the Fourier transform (see Lemma \ref{Lemma: Fourier transform of the fractional gradient}).
In this section we provide a complete proof for functions in $W^{1,p}(\Rn)$.
This result, which is of interest in its own right, is a first step to prove the $\Gamma$-convergence of the functional $\mathcal{I}_s$ to $\mathcal{I}$ (see the Introduction).
It should be compared with \cite[Cor.\ 2]{BoBrMi2001}, where the convergence of the Gagliardo seminorm to the $L^p$ norm of the fractional gradient is shown (see also \cite[Prop.\ 15.7]{ponce_book}). 


We first recall the definition of Riesz potential, since it will be used in this section to relate the fractional gradient to the classical gradient.
Given $0<s<n$, the Riesz kernel $I_s : \R^n \setminus \{0\} \to \R$ is
\begin{equation*}
I_s (x)= \frac{1}{\gamma(s)}\frac{1}{|x|^{n-s}},
\end{equation*} 
where the constant $\gamma(s)$ is given by
\[
\gamma(s)=\frac{\pi^{\frac{n}{2}} \, 2^s \, \Gamma(\frac{s}{2})}{\Gamma(\frac{n-s}{2})} .
\]
The Riesz potential of a locally integrable function $f$ is given by
\begin{equation*}
I_s * f(x)=\frac{1}{\gamma(s)} \int \frac{f(y)}{|x-y|^{n-s}}dy.
\end{equation*}
Note the relationship between $\gamma$ and $c_{n,s}$:
\begin{equation} \label{eq: constants relationship}
c_{n,s} = \frac{n+s-1}{\gamma (1-s)}.
\end{equation}

It is interesting to regard the $s$-fractional gradient from a Fourier analysis perspective.
As usual, the Fourier transform of a function $f:\Rn\to \mathbb{C}$ is defined as 
\[\hat{f}(\xi)=\int_{\Rn} f(x)e^{-2\pi i x\cdot \xi}\,dx.\]
We know that classical differentiation translates, when applying the Fourier transform, into multiplication of the Fourier transform of a function by a monomial.
This also happens in a fractional sense in this situation. The following result was proved in \cite[Th.\ 1.4]{ShS2015} (but it was mistakenly written with a sign switch); we include here a proof for the reader's convenience. 

\begin{lem} \label{Lemma: Fourier transform of the fractional gradient}
	Let $0<s<1$. Then, for all $u \in C_c^{\infty}(\Rn)$,
	\begin{equation*}
	\widehat{D^su} (\xi) = \frac{2 \pi i \xi }{|2 \pi \xi|^{1-s}}\hat{u} (\xi) , \qquad \xi \in \Rn .
	\end{equation*}
\end{lem}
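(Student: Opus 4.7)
The plan is to reduce the statement to a standard Fourier calculation via the identity $D^s u = D(I_{1-s}*u)$ for $u \in C_c^{\infty}(\mathbb{R}^n)$, which is already mentioned in the introduction (attributed to \cite{COMI2019}). Once this representation is in hand, the Fourier formula follows from two classical facts: (a) the Riesz-potential rule $\widehat{I_{\alpha}}(\xi) = |2\pi\xi|^{-\alpha}$, valid for $0 < \alpha < n$ as an identity of tempered distributions (see \cite{Stein70}), and (b) the differentiation rule $\widehat{Df}(\xi) = 2\pi i \xi \, \hat{f}(\xi)$. Taking $\alpha = 1-s$ and combining via the convolution theorem gives immediately
\[
\widehat{D^s u}(\xi) \;=\; \widehat{D(I_{1-s}*u)}(\xi) \;=\; 2\pi i \xi \cdot |2\pi\xi|^{s-1}\hat{u}(\xi) \;=\; \frac{2\pi i \xi}{|2\pi\xi|^{1-s}}\hat{u}(\xi),
\]
which is the claim.

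To justify the identity $D^s u = D(I_{1-s}*u)$, I would argue as follows. Since $u \in C_c^{\infty}$ and $I_{1-s}$ is locally integrable (because $0 < 1-s < n$), we have $I_{1-s}*u \in C^{\infty}$, and we may commute the derivative with the convolution, obtaining $D(I_{1-s}*u) = I_{1-s}*Du$. On the other hand, starting from \eqref{eq:DsuCc}, I would perform the change of variables $y = x-h$ and then integrate by parts on $B(0,r)^c$ using that
\[
\nabla_h \bigl( |h|^{-(n+s-1)} \bigr) = -(n+s-1)\,\frac{h}{|h|^{n+s+1}},
\]
checking that the boundary term at $|h|=r$ vanishes as $r \to 0$ thanks to $u(x)-u(x-h) = O(|h|)$. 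This yields
\[
D^s u(x) = \frac{c_{n,s}}{n+s-1} \int \frac{Du(x-h)}{|h|^{n+s-1}}\,dh,
\]
which, in view of \eqref{eq: constants relationship}, is exactly $(I_{1-s}*Du)(x)$.

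The main obstacle is carrying out the integration-by-parts step cleanly and matching the constants $c_{n,s}$ and $1/\gamma(1-s)$ through \eqref{eq: constants relationship}; everything else is a routine application of standard Fourier identities. As an alternative strategy that avoids invoking the identity $D^s u = D(I_{1-s}*u)$, one could compute $\widehat{D^s u}$ directly by inserting \eqref{eq:DsuCc} into the Fourier integral, applying Fubini (justified because, for fixed $\xi$, the kernel $|h|^{-(n+s)}$ against the smooth difference $u(x)-u(x-h)$ is absolutely integrable as a function of $(x,h)$ when paired with a Schwartz test function), and then evaluating the scalar multiplier
\[
c_{n,s}\int \frac{1 - e^{-2\pi i \xi\cdot h}}{|h|^{n+s}}\,\frac{h}{|h|}\,dh
\]
via odd symmetry (to eliminate the cosine part) and a rescaling $h = k/(2\pi|\xi|)$ to extract the homogeneous factor $(2\pi|\xi|)^s$, with the remaining dimensionless integral matched to $1/c_{n,s}$ by the explicit form of the normalization constant. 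Either route arrives at the claimed formula.
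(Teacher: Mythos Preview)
Your proposal is correct and follows essentially the same route as the paper: both hinge on the representation $D^s u = I_{1-s}*Du$ (equivalently $D(I_{1-s}*u)$) and then apply the Fourier transform using $\widehat{I_{1-s}}(\xi)=|2\pi\xi|^{-(1-s)}$ and $\widehat{Du}(\xi)=2\pi i\xi\,\hat u(\xi)$. The only cosmetic difference is that the paper simply cites \cite[Th.~1.2]{ShS2015} for the identity and verifies $I_{1-s}\in\mathcal{S}'$ explicitly, whereas you sketch an integration-by-parts derivation of the identity yourself.
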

\begin{proof} By \cite[Th.\ 1.2]{ShS2015}, $D^su=I_{1-s}*Du$ for any $u \in C_c^{\infty}(\Rn)$. We compute the Fourier transform of $D^s u$ in the sense of distributions. We start by checking that $I_{1-s} \in \mathcal{S}'$, where $\mathcal{S}$ is the Schwartz space.
	Given $\phi \in \mathcal{S}$,
	\begin{align*}
	\gamma (1-s) \langle I_{1-s}, \phi \rangle &= \int \frac{\phi(x)}{|x|^{n+s-1}}dx=\int_{B(0,1)}\frac{\phi(x)}{|x|^{n+s-1}}dx + \int_{B(0,1)^c} \phi(x)|x| \frac{1}{|x|^{n+s}}dx \\
	&\leq \lVert \phi\rVert_{\infty} \left\| \frac{1}{|x|^{n+s-1}} \right\|_{L^1(B(0,1))}+ \left\|\phi |x|\right\|_{\infty} \left\| \frac{1}{|x|^{n+s}} \right\|_{L^1(B(0,1)^c)} \\
	&\leq [ \lVert \phi\rVert_{\infty}+\left\|\phi |x|\right\|_{\infty}]\left[\left\| \frac{1}{|x|^{n+s-1}} \right\|_{L^1(B(0,1))}+ \left\| \frac{1}{|x|^{n+s}} \right\|_{L^1(B(0,1)^c)} \right],
	\end{align*}
	which shows that the Riesz potential is a continuous linear map over the Schwartz space.

	%

	Now, $D^s u \in L^1(\Rn)$, since $u \in C^{\infty}_c (\Rn)$ (see, e.g., \cite[Lemma 3.1]{BeCuMC}) and, so, $D^s u$ can also be regarded as a tempered distribution. Therefore, we can apply the Fourier transform to $D^su=I_{1-s}*Du$ and, having in mind that the latter is a convolution of the Riesz potential with a Schwartz function, as well as that $\widehat{I_{1-s}} (\xi)=|2\pi \xi|^{-(1-s)}$ (see \cite{Stein70}), we have
	\begin{equation*}
	\widehat{D^su} (\xi) =\widehat{I_{1-s}*Du} (\xi) =\widehat{I_{1-s}} (\xi) \, \widehat{Du} (\xi) =|2\pi \xi|^{-(1-s)} \widehat{Du} (\xi) =\frac{2\pi i\xi }{|2\pi \xi|^{1-s}} \hat{u} (\xi) , \qquad \xi \in \R^n , 
	\end{equation*}
	as desired.
\end{proof}

The main result of the section is the following.   
\begin{teo} \label{Prop: convergencia del gradiente fraccionario al clásico}
	Let $0<s<1$ and $1<p<\infty$. Then, for each $u\in W^{1,p}(\Rn)$,
	\begin{equation*}
	D^su \rightarrow D u \text{ in $L^p(\mathbb{R}^n)$ as } s \nearrow 1 .
	\end{equation*}
\end{teo}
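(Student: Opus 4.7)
The plan is to reduce to smooth functions by density, using the $s$-uniform bound from Proposition~\ref{Lemma: inequality with W1p}, and then, for $u\in C_c^\infty(\Rn)$, split the defining integral \eqref{eq:DsuCc} into near and far parts. The far part will vanish in $L^p$ because of the factor $c_{n,s}\to 0$ (Lemma~\ref{Lemma: bound of constant c_{n,s}}), while the near part will be recognized as a convolution against a matrix-valued kernel that behaves as an approximate identity applied to $Du$.

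For the density step, given $u\in W^{1,p}$ and $\eta>0$, pick $u_\eta\in C_c^\infty$ with $\|u-u_\eta\|_{W^{1,p}}<\eta$. Proposition~\ref{Lemma: inequality with W1p} yields $\|D^s(u-u_\eta)\|_p \le (C/s)\eta$, uniformly for $s\in[1/2,1)$, and $\|Du-Du_\eta\|_p\le\eta$, so by the triangle inequality it suffices to prove $\|D^s u_\eta-Du_\eta\|_p\to 0$ for the fixed smooth $u_\eta$.

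For the smooth case, the change $y=x+h$ gives $D^s u(x)=c_{n,s}\int\frac{u(x+h)-u(x)}{|h|^{n+s+1}}h\,dh$, which we split as $I_1+I_2$ with $I_1$ over $|h|<1$ and $I_2$ over $|h|\ge 1$. Odd symmetry removes the $u(x)$ contribution in $I_2$, so Minkowski's integral inequality and Lemma~\ref{Lemma: bound of constant c_{n,s}} give $\|I_2\|_p\le c_{n,s}\sigma_{n-1}s^{-1}\|u\|_p\to 0$. For $I_1$ we insert $u(x+h)-u(x)=\int_0^1 Du(x+th)\cdot h\,dt$, apply Fubini, and substitute $y=th$ to obtain
\[
I_1(x)=\int_{B(0,1)}K_s(y)\,Du(x+y)\,dy,\qquad K_s(y):=\frac{c_{n,s}}{s}(1-|y|^s)\frac{y\otimes y}{|y|^{n+s+1}}.
\]
Rotational symmetry reduces $\int K_s\,dy$ to a scalar trace computation; combining $\int_0^1(1-r^s)r^{-s}\,dr=s/(1-s)$ with $c_{n,s}/(1-s)\to 1/\omega_n$ and $\sigma_{n-1}=n\omega_n$ gives $\int K_s(y)\,dy\to \mathrm{Id}$. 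Writing
\[
I_1(x)-Du(x)=\Big(\int K_s-\mathrm{Id}\Big)Du(x)+\int K_s(y)\bigl(Du(x+y)-Du(x)\bigr)\,dy,
\]
the first term tends to $0$ in $L^p$, and for the second, Minkowski yields the bound $\int|K_s(y)|\,\omega(y)\,dy$ with $\omega(y):=\|Du(\cdot+y)-Du\|_p\to 0$ as $y\to 0$ by $L^p$-continuity of translation.

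The main obstacle is verifying that $K_s$ behaves as an approximate identity even though its support is fixed in $B(0,1)$ and its pointwise values tend to $0$: the key observation is that, using $|K_s(y)|\le\frac{c_{n,s}}{s}|y|^{1-n-s}$, the tail satisfies $\int_{\delta<|y|<1}|K_s(y)|\,dy\lesssim \frac{c_{n,s}}{s(1-s)}(1-\delta^{1-s})$, and since $\delta^{1-s}\to 1$ as $s\nearrow 1$ for every fixed $\delta>0$, the mass of $K_s$ concentrates at the origin in the limit. Splitting the convolution estimate at radius $\delta$, letting $s\nearrow 1$ and then $\delta\to 0$, completes the proof.
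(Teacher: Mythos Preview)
Your proof is correct and takes a genuinely different route from the paper. Both arguments share the same density reduction via Proposition~\ref{Lemma: inequality with W1p}, but for the core case $u\in C_c^\infty(\Rn)$ the paper uses the Fourier representation of Lemma~\ref{Lemma: Fourier transform of the fractional gradient}: from $\widehat{D^s u}(\xi)=|2\pi\xi|^{s-1}\,2\pi i\xi\,\hat u(\xi)$ and the bound $|2\pi\xi|^s\le 1+|2\pi\xi|$, dominated convergence gives $\widehat{D^s u}\to\widehat{Du}$ in $L^1$, hence $D^s u\to Du$ uniformly, and then interpolation with the $L^1$ bound from Proposition~\ref{Lemma: inequality with W1p} yields $L^p$ convergence. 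Your argument is purely real-variable: you identify the near-field contribution as the convolution $K_s*Du$ with an explicit matrix kernel, verify $\int K_s\to\mathrm{Id}$ through the constant calibration of Lemma~\ref{Lemma: bound of constant c_{n,s}}, and show the mass of $K_s$ concentrates at the origin via the crucial observation $1-\delta^{1-s}\to 0$. The Fourier proof is shorter and more conceptual once the symbol formula is available; your approach is more elementary, avoids Fourier analysis entirely, and makes the mechanism of convergence visible at the kernel level, which could be advantageous in settings where the Fourier side is less accessible or when one seeks quantitative rates.
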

\begin{proof} We first prove the result for smooth functions and then extend it by density to $W^{1,p}(\R^n)$.
	
	%
	
	Let $u \in C_c^\infty (\Rn)$. By Lemma \ref{Lemma: Fourier transform of the fractional gradient},
	\begin{equation*} 
	\widehat{D^su} (\xi) = \frac{2\pi i\xi }{|2\pi \xi|^{1-s}}\hat{u} (\xi) , \qquad \xi \in \Rn , 
	\end{equation*}
	so by the elementary inequality $t^s \leq 1 + t$ for all $t \geq 0$,
	\begin{equation}\label{eq:Dsbound}
	\left| \widehat{D^su} (\xi) \right| = |2\pi \xi|^s \left| \hat{u} (\xi) \right| \leq \left( 1 + \left| 2\pi \xi \right| \right) \left| \hat{u} (\xi) \right| .
	\end{equation}
	As $\hat{u}$ is in the Schwartz space (because $u \in C^{\infty}_c (\Rn)$), both $\hat{u}$ and $\xi \, \hat{u} (\xi)$ are in $L^1 (\Rn)$.
	Therefore, $\widehat{D^su} \in L^1 (\Rn)$.
	On the other hand, by basic properties of the Fourier transform, $\widehat{Du} (\xi) = 2 \pi i \xi\hat{u} (\xi)$, so clearly, $\widehat{D^su} \to \widehat{Du}$ a.e.\ as $s \nearrow 1$.
	Thanks to the bound \eqref{eq:Dsbound} and dominated convergence, $\widehat{D^su} \to \widehat{Du}$ in $L^1(\R^n)$.
	As the inverse Fourier transform is continuous from $L^1(\Rn)$  to $L^{\infty}(\Rn)$, we also have that 
	\[
	D^s u\rightarrow Du \ \text{  uniformly in } \Rn.
	\]
	Now, using a standard interpolation inequality (or H\"older's), we get that
	\begin{align*}
	\lVert D^su- Du\rVert_p & \leq \lVert D^s u -Du\rVert_1^{\frac{1}{p}} \lVert D^su -Du \rVert_{\infty}^{\frac{1}{p'}} \\
	&\leq \left( \lVert D^su \rVert_1 + \lVert Du \rVert_1 \right)^{\frac{1}{p}} \lVert D^su-Du \rVert_{\infty}^{\frac{1}{p'}} \\
	&\leq C \lVert u\rVert_{W^{1,1}(\mathbb{R}^n)}^{\frac{1}{p}} \lVert D^su -Du \rVert_{\infty}^{\frac{1}{p'}},
	\end{align*}
	where we have used Proposition \ref{Lemma: inequality with W1p}, considering that as $s \nearrow 1$ we can assume $s \geq \frac{1}{2}$, so the constant $C>0$ does not depend on $s$. Thus, the convergence $D^s u\to Du$ in $L^p$ follows and the result is true for $C^{\infty}_c$ functions.
	
	To conclude the proof, we extend this result through a density argument. 
	Let us consider $u \in W^{1,p}(\Rn)$.
	Then, for every $\varepsilon>0$ we can find $v \in C_c^{\infty} (\Rn)$ such that $\lVert v-u\rVert_{W^{1,p}(\mathbb{R}^n)}< \varepsilon$.
	Thus,
	\begin{align*}
	\lVert D^su - Du \rVert_p &\leq  \lVert D^su - D^sv\rVert_p+ \lVert D^sv - Dv \rVert_p+ \lVert Dv - Du \rVert_p \\
	& \leq (C+1)\varepsilon +\lVert D^sv - Dv \rVert_p,
	\end{align*}
	where we have used  again Proposition \ref{Lemma: inequality with W1p}. Finally, when we take limits we obtain that
	\begin{equation*}
	\limsup_{s \nearrow 1}\lVert D^su - Du \rVert_p \leq (C+1)\varepsilon,
	\end{equation*}
	for every $\varepsilon>0$, which concludes the result.
\end{proof}

Thanks to Lemma \ref{le:div}, the previous result also implies the convergence in $L^p$ of the fractional divergence. 

\begin{cor} \label{Cor: strong convergence of the fractional divergence}
	Let $0<s<1$ and $1<p<\infty$. Then, for each $\phi \in W^{1,p}(\Rn, \Rn)$,
	\begin{equation*}
	\diver^s \phi \rightarrow \diver \phi \text{ in $L^p(\mathbb{R}^n)$ as } s \nearrow 1 .
	\end{equation*}
\end{cor}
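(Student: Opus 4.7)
The plan is to reduce the claim to the already-established convergence $D^s \phi \to D\phi$ in $L^p$ (Theorem \ref{Prop: convergencia del gradiente fraccionario al clásico}), using the identity $\diver^s \phi = \tr D^s \phi$ supplied by Lemma \ref{le:div}. The whole argument is essentially a one-liner once the three ingredients are lined up, which is why the statement is phrased as a corollary.

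First I would check that Lemma \ref{le:div} is applicable. Since $\phi \in W^{1,p}(\Rn,\Rn)$, applying Proposition \ref{Lemma: inequality with W1p} componentwise shows that $\phi \in H^{s,p}(\Rn,\Rn)$ for every $s \in (0,1)$. Hence Lemma \ref{le:div} gives $\diver^s \phi = \tr D^s \phi$ a.e., and of course in the smooth/classical limit $\diver \phi = \tr D\phi$ a.e.

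Next, Theorem \ref{Prop: convergencia del gradiente fraccionario al clásico}, applied componentwise to the rows of $\phi$ (using the tensor formulation \eqref{eq:Dsuvector}), yields
\[
D^s \phi \longrightarrow D\phi \quad \text{in } L^p(\Rn,\Rnn) \text{ as } s \nearrow 1.
\]
Since the trace is a bounded linear map $\R^{n\times n}\to\R$, there is a dimensional constant $C_n>0$ with $\|\tr M\|_p \leq C_n \|M\|_p$ for any $M\in L^p(\Rn,\Rnn)$. Combining everything,
\[
\|\diver^s \phi - \diver \phi\|_p = \|\tr(D^s \phi - D\phi)\|_p \leq C_n \,\|D^s \phi - D\phi\|_p \longrightarrow 0,
\]
which is the desired convergence. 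There is no real obstacle: the only thing to verify carefully is that Lemma \ref{le:div} does apply for the relevant range of $s$, and that is ensured by the uniform (in $s\ge 1/2$, say) bound from Proposition \ref{Lemma: inequality with W1p}.
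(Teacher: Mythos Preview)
Your proof is correct and follows exactly the route the paper takes: the corollary is stated as an immediate consequence of Theorem \ref{Prop: convergencia del gradiente fraccionario al clásico} via the identity $\diver^s\phi=\tr D^s\phi$ from Lemma \ref{le:div}. Your write-up even makes explicit the verification that $\phi\in H^{s,p}$ (via Proposition \ref{Lemma: inequality with W1p}) needed to invoke Lemma \ref{le:div}, which the paper leaves implicit.
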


\section[Compactness]{Compactness}\label{se:compactness}

In this section we establish that any sequence $\{ u_s \}_{s \in (0,1)}$ with bounded $H^{s,p}_g(\O)$ norm is precompact in $L^q(\Rn)$ for a suitable $q\ge 1$. 

Even though the continuous embedding of $H^{s,p}$ into $H^{\bar{s},p}$ for $0<\bar{s}<s<1$ is already known, we start by giving a new proof of this result, where we show that the embedding constant is independent of $s$. This proof follows the ideas of Theorem \ref{th:Poincare}.

\begin{prop} \label{Prop: franctional gradients ineq}
Let $0< \bar{s} < s_0 <1$. Let $\Omega\subset \mathbb{R}^n$ be a bounded open set. 
	Then, there exists a constant $C=C(\O ,n,s_0,\bar{s})>0$ such that for every $s \in [s_0 ,1)$, $1<p<\infty$ and $u \in H_0^{s,p}(\O)$ we have 
	\begin{equation} \label{ineq: embbedding of Bessel spaces}
	\|D^{\bar{s}} u\|_p \leq C \|D^s u\|_{p} .
	\end{equation} 
\end{prop}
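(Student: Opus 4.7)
My plan is to adapt the structure of the proof of Theorem~\ref{th:Poincare}: by density reduce to $u \in C_c^\infty(\O)$, express $D^{\bar{s}} u$ as a Riesz potential of $D^s u$, and then decompose $\mathbb{R}^n$ into a large bounded set $\O_1 \supset \O$ and its complement, applying convolution estimates on the bounded region and the explicit decay of $D^{\bar{s}} u$ on the exterior. Concretely, set $\alpha := s - \bar{s} \in [s_0 - \bar{s},\, 1 - \bar{s}]$ and combine $D^s u = I_{1-s} \ast Du$ from \cite[Th.~1.2]{ShS2015}, the same formula for $D^{\bar{s}} u$, and the Riesz semigroup law $I_\alpha \ast I_{1-s} = I_{1-\bar{s}}$ to obtain $D^{\bar{s}} u = I_\alpha \ast D^s u$ componentwise. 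This yields the pointwise bound $|D^{\bar{s}} u(x)| \leq \gamma(\alpha)^{-1} \int |x-y|^{-(n-\alpha)} |D^s u(y)|\,dy$, with $1/\gamma(\alpha)$ bounded uniformly on $\alpha \in [s_0 - \bar{s},\, 1 - \bar{s}]$. Pick $R \geq 1$ with $\O \subset B(0, R)$ and set $\O_1 := B(0, 2R)$.

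For $x \in \O_1^c$, since $u(x) = 0$ and $|x - y| \geq |x|/2$ whenever $y \in \O$, the definition of $D^{\bar{s}} u$ and the uniform bound on $c_{n, \bar{s}}$ (Lemma~\ref{Lemma: bound of constant c_{n,s}}) give, exactly as in the derivation of~\eqref{eq: fr gradient bounded by Lp norm of u},
\[
|D^{\bar{s}} u(x)| \leq C(n)\, |\O|^{1/p'}\, \frac{\|u\|_{L^p(\O)}}{|x|^{n + \bar{s}}}.
\]
Raising to the $p$-th power and integrating gives $\int_{|x| \geq 2R} |x|^{-(n+\bar{s})p}\,dx = \sigma_{n-1} (2R)^{n - (n + \bar{s})p} / ((n + \bar{s}) p - n)$; with $R \geq 1$ the numerator is at most $1$ and $(n + \bar{s}) p - n \geq \bar{s}$, so this tail integral stays bounded uniformly in $p \in (1, \infty)$. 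Applying Corollary~\ref{cor: fractional Sobolev inequality definitiva} to absorb $\|u\|_{L^p(\O)}$ into $\|D^s u\|_p$ yields $\|D^{\bar{s}} u\|_{L^p(\O_1^c)} \leq C \|D^s u\|_p$.

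For $x \in \O_1$, I split the Riesz integral into $A(x) := \int_{B(x,1)} |x-y|^{-(n-\alpha)} |D^s u(y)|\,dy$ and $B(x) := \int_{B(x,1)^c} |x-y|^{-(n-\alpha)} |D^s u(y)|\,dy$. Minkowski's integral inequality, exactly as in the proof of Proposition~\ref{Lemma: inequality with W1p}, gives $\|A\|_p \leq (\sigma_{n-1}/\alpha) \|D^s u\|_p \leq (\sigma_{n-1}/(s_0 - \bar{s})) \|D^s u\|_p$. For the nonlocal piece, the constraint $|x - y| \geq 1$ produces the crude pointwise bound $|B(x)| \leq \|D^s u\|_{L^1(\mathbb{R}^n)}$, and this $L^1$ norm is controlled by splitting $\mathbb{R}^n = \O_1 \cup \O_1^c$: H\"older on $\O_1$ gives $\|D^s u\|_{L^1(\O_1)} \leq |\O_1|^{1/p'} \|D^s u\|_p$, while on $\O_1^c$ the decay estimate~\eqref{eq: fr gradient bounded by Lp norm of u} combined with Corollary~\ref{cor: fractional Sobolev inequality definitiva} gives $\|D^s u\|_{L^1(\O_1^c)} \leq C \|u\|_p \leq C \|D^s u\|_p$. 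Since $|\O_1|^{1/p}$ is uniformly bounded for $p \in (1, \infty)$ when $R \geq 1$, this yields $\|B\|_{L^p(\O_1)} \leq |\O_1|^{1/p} \|D^s u\|_1 \leq C \|D^s u\|_p$, completing the inner estimate.

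The main obstacle is maintaining uniformity of all constants in $p \in (1, \infty)$ and $s \in [s_0, 1)$. Since $I_\alpha$ is not bounded on $L^p(\mathbb{R}^n)$, the pointwise bound $|B(x)| \leq \|D^s u\|_1$ cannot directly yield an $L^p(\mathbb{R}^n)$ estimate on $B$; restricting $x$ to the bounded set $\O_1$ and handling the exterior separately through the explicit decay of $D^{\bar{s}} u$ itself (rather than through the Riesz representation) is what makes the argument work. Tracking the constants $1/\gamma(\alpha)$, $\sigma_{n-1}/\alpha$, $c_{n,\bar{s}}$, $|\O|^{1/p'}$, $|\O_1|^{1/p}$, the tail integrals, and the Poincar\'e-Sobolev constant from Corollary~\ref{cor: fractional Sobolev inequality definitiva} shows that each remains bounded on $\alpha \in [s_0 - \bar{s}, 1 - \bar{s}]$ and $p \in (1, \infty)$, producing a final constant depending only on $n, \O, s_0, \bar{s}$.
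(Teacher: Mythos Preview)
Your proof is correct and follows essentially the same strategy as the paper's: density to $C_c^\infty$, the Riesz semigroup identity $D^{\bar{s}}u = I_{s-\bar{s}} * D^s u$, an inner/outer splitting with decay estimates for the fractional gradient outside a large ball, and Theorem~\ref{th:Poincare}/Corollary~\ref{cor: fractional Sobolev inequality definitiva} to absorb $\|u\|_{L^p(\O)}$. The only organizational difference is that the paper proceeds in two steps (first bounding $\|D^{\bar{s}}u\|_{L^p(\O)}$ by splitting the $y$-integral over $\O_1$ and $\O_1^c$ via the H\"older kernel trick, then extending to all of $\Rn$ using $\O_C = \O + B(0,1)$), whereas you do it in one pass by splitting the Riesz integral at $|x-y|=1$ and controlling the far piece through $\|D^s u\|_{L^1(\Rn)}$; both routes yield constants with the required uniformity in $s$ and $p$.
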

\begin{proof}
By density, it is enough to prove the inequality for $u \in C^{\infty}_c (\O)$.
We divide the proof into two steps.

\smallskip

	\emph{Step 1.} First, we prove that there exists $C = C(\O, n, s_0, \bar{s}) > 0$ such that 
	\begin{equation} \label{ineq: fractional gradients inequality}
	\|D^{\bar{s}} u\|_{L^p(\O)} \leq C \|D^s u\|_p .
	\end{equation}
	Let $R \geq 1$ be such that $\O \subset B (0, R)$.
	Define $\O_1 := B (0, 2 R)$ and fix $x \in \O$.
Notice that, as a consequence of \cite[Th.\ 1.2]{ShS2015} and the semigroup property of the Riesz potential, we can write
\begin{equation*}
D^{\bar{s}}u=I_{1-\bar{s}}*Du= (I_{1-s}*I_{s-\bar{s}})*Du=I_{s-\bar{s}}* D^su .
\end{equation*}
This equality, together with \eqref{eq: constants relationship} and Lemma \ref{Lemma: bound of constant c_{n,s}}
yields
	\begin{equation}\label{eq:upoint 2}
	\begin{split}
	\left| D^{\bar{s} }u (x) \right| &\leq \frac{1}{\gamma(s-\bar{s})} \left[ \int_{\O_1} \frac{|D^s u(y)|}{|x-y|^{n-(s-\bar{s})}} \, dy + \int_{\O_1^c} \frac{|D^s u(y)|}{|x-y|^{n-(s-\bar{s})}} \, dy \right] \\
	&\leq C(n) \left[ \int_{\O_1} \frac{|D^s u(y)|}{|x-y|^{n-(s-\bar{s})}} \, dy + \int_{\O_1^c} \frac{|D^s u(y)|}{|x-y|^{n-(s-\bar{s})}} \, dy \right] .
	\end{split}
	\end{equation}
	Now $\O_1 \subset B (x, 3 R)$, so
	\begin{equation}\label{eq:x-yn-s 2}
	\int_{\O_1} \frac{1}{|x-y|^{n-(s-\bar{s})}} \, dy \leq \int_{B(x, 3 R)} \frac{1}{|x-y|^{n-(s-\bar{s})}} \, dy = \frac{\sigma_{n-1}}{s-\bar{s}} \left( 3 R \right)^{(s-\bar{s})} \leq C(n) \frac{1}{s-\bar{s}} R .
	\end{equation}
	Similarly, $\O \subset B (y, 3 R)$ for every $y \in \O_1$, so
	\begin{equation}\label{eq:x-yn-sO 2}
	\int_{\O} \frac{1}{|x-y|^{n-(s-\bar{s})}} \, dx \leq C(n) \frac{1}{s-\bar{s}} R .
	\end{equation}
	By \eqref{eq:x-yn-s 2} and H\"older's inequality,
	\[
	\int_{\O_1} \frac{|D^s u(y)|}{|x-y|^{n-(s-\bar{s})}} \, dy \leq \left[ C(n) \frac{1}{s-\bar{s}} R \right]^{\frac{1}{p'}} \left( \int_{\O_1} \frac{|D^s u(y)|^p }{|x-y|^{n-(s-\bar{s})}} \, dy \right)^{\frac{1}{p}} .
	\]
	Therefore, using \eqref{eq:x-yn-sO 2} and Fubini's theorem, we find, as in \eqref{eq:DsuOO1},
	\begin{equation}\label{eq:DsuOO1 2}
	\left[ \int_{\O} \left( \int_{\O_1} \frac{|D^s u(y)|}{|x-y|^{n-(s-\bar{s})}} \, dy \right)^p dx \right]^{\frac{1}{p}} \leq C(n) \frac{1}{s-\bar{s}} R \left\| D^s u \right\|_p .
	\end{equation}
	
	Now, for any $y \in \O_1^c$, 
	similarly to \eqref{eq:x-y}, for each $x \in \O$ we have
	\begin{equation}\label{eq:x-y 2}
	\frac{1}{|x-y|^{n-(s-\bar{s})}} \leq C(n) \frac{1}{|y|^{n-(s-\bar{s})}} 
	\end{equation}
and, in fact, \eqref{eq: fr gradient bounded by Lp norm of u} also holds.
Thus, using \eqref{eq:x-y 2} and \eqref{eq: fr gradient bounded by Lp norm of u},
	\[
	\int_{\O_1^c} \frac{|D^s u(y)|}{|x-y|^{n-(s-\bar{s})}} \, dy \leq C(n) |\O|^{\frac{1}{p'}} \left\| u \right\|_{L^p (\O)} \int_{\O_1^c} \frac{1}{|y|^{n+s}} \frac{1}{|y|^{n-(s-\bar{s})}} \, dy = C(n) |\O|^{\frac{1}{p'}} \frac{R^{-n-\bar{s}}}{n+\bar{s}} \left\| u \right\|_{L^p (\O)} .
	\]
	This last inequality, combined with \eqref{eq:upoint 2} and \eqref{eq:DsuOO1 2}, implies by the triangular inequality that
	\begin{equation*}
	\left\| D^{\bar{s}}u \right\|_{L^p (\O)} \leq C (n) \frac{1}{s - \bar{s}} R \| D^s u \|_p + |\O| C(n) \frac{R^{-n-\bar{s}}}{n+ \bar{s}} \|u \|_{L^p (\O)} .
	\end{equation*}
	Finally, we apply Theorem \ref{th:Poincare} on the right hand side to obtain
\[
 \left\| D^{\bar{s}}u \right\|_{L^p (\O)} \leq C (n, \O) \left( \frac{1}{s - \bar{s}} + \frac{R^{-n-\bar{s}}}{n+ \bar{s}} \frac{1}{s} \right) \| D^s u \|_p \leq C (n, \O) \left( \frac{1}{s_0 - \bar{s}} + \frac{1}{s_0 (n+ \bar{s})} \right) \| D^s u \|_p ,
\]
which completes the proof of \eqref{ineq: fractional gradients inequality}. 
	
\smallskip
	
	\emph{Step 2.} Now we prove \eqref{ineq: embbedding of Bessel spaces}. Let us call $\O_C=\O+B(0,1)$.
Then,
\[
 \left\| D^{\bar{s}} u \right\|_p \leq \left\| D^{\bar{s}} u \right\|_{L^p (\O_C)} + \left\| D^{\bar{s}} u \right\|_{L^p(\O_C^c)} .
\]
By \eqref{ineq: fractional gradients inequality} there exists $C>0$ (depending on $\O_C, n, s_0, \bar{s}$, so, ultimately, on $\O, n, s_0, \bar{s}$) such that
	\begin{equation} \label{eq: corollary fractional gradients ineq 1}
	\|D^{\bar{s}}u\|_p \leq C \|D^s u\|_p + \|D^{\bar{s}} u\|_{L^p(\O_C^c)}.
	\end{equation}
Now, for $x \in \O_C^c$,
\[
 D^{\bar{s}} u (x) = - c_{n, \bar{s}} \int_{\O} \frac{u(y)}{|x-y|^{n+ \bar{s}}} \frac{x-y}{|x-y|} dy ,
\]
so, by Lemma \ref{Lemma: bound of constant c_{n,s}},
\[
 \left| D^{\bar{s}} u (x) \right| \leq C(n) \int_{\O} \frac{|u(y)|}{|x-y|^{n+ \bar{s}}} dy ,
\]
and, hence, by Minkowski's integral inequality,
	\begin{equation} \label{eq: corollary fractional gradients ineq 2}
 \left\| D^{\bar{s}} u \right\|_{L^p (\O_C^c)} \leq C(n) \left( \int_{\O_C^c} \left( \int_{\O} \frac{|u(y)|}{|x-y|^{n+ \bar{s}}} dy \right)^p dx \right)^{\frac{1}{p}} \leq C(n) \int_{\O} |u(y)| \left( \int_{\O_C^c} \frac{1}{|x-y|^{(n+ \bar{s})p}} dx \right)^{\frac{1}{p}} dy ,
	\end{equation}
Now, for every $y \in \O$ we have $\O_C^c-y \subset B(0,1)^c$, and hence
	\begin{equation*}
	\int_{\O_C^c}  \frac{1}{|x-y|^{(n+\bar{s})p}} \,dx= 
	\int_{\O_C^c-y}  \frac{1}{|z|^{(n+\bar{s})p}} \,dx \leq 	\int_{B(0,1)^c}  \frac{1}{|z|^{(n+\bar{s})p}} \,dx =\frac{\sigma_{n-1}}{(n + \bar{s}) p - n} \leq \frac{\sigma_{n-1}}{\bar{s}} .
	\end{equation*}
	Thus, continuing from  \eqref{eq: corollary fractional gradients ineq 2} we find that
	\begin{equation} \label{eq: corollary fractional gradients ineq 3}
 \left\| D^{\bar{s}} u \right\|_{L^p (\O_C^c)} \leq C(n) \max \{ 1, \frac{\sigma_{n-1}}{\bar{s}} \} \int_{\O} |u(y)| dy \leq C(n) \max \{ 1, \frac{\sigma_{n-1}}{\bar{s}} \} \max \{ 1, |\O| \} \left\| u \right\|_{L^p (\O)} .
	\end{equation}
Inequalities \eqref{eq: corollary fractional gradients ineq 1}, \eqref{eq: corollary fractional gradients ineq 3} and Theorem \ref{th:Poincare} finish the proof.
\end{proof}

Now we present the main result of this section.
The proof of the following compactness result is partly inspired by that of \cite[Lemma 3.6]{MeD}.
This result should be compared with \cite[Th.\ 1.2]{Pon}, in which a $W^{s,p}$ version is done.
In what follows, given $p \in [1, n)$ we denote by $p^*$ its Sobolev conjugate exponent, i.e., $p^* = \frac{pn}{n-p}$.
Recall also the notation $p^*_s$ from Theorem \ref{Bessel embedding theorem}.

\begin{teo} \label{Pr: weak convergence in s}
	Let $1<p<\infty$ and $g\in W^{1,p}(\Rn)$.
For each $s \in (0, 1)$, let $u_s \in H_g^{s,p}(\Omega)$ be such that the family $\{D^s u_s\}_{s \in (0,1)}$ is bounded in $L^p(\Rn)$.
	Then, there exist $u \in W^{1,p} (\Rn)$ and an increasing sequence $\{ s_j \}_{j \in \mathbb{N}}\subset (0, 1)$ with $\lim_{j\to \infty} s_j = 1$ such that 
	for every $q$ satisfying
	\begin{equation*}
	\begin{cases}
	q\in \left[p, p^* \right) & \text{if } p<n , \\
	q\in[p, \infty) & \text{if } p = n , \\
	q\in[p, \infty] & \text{if } p > n ,
	\end{cases}
	\end{equation*}
	there exists $j_q \in \mathbb{N}$ for which $\{u_{s_j}\}_{j\ge j_q}\subset L^q(\Rn)$ and the convergences 
	\[u_{s_j}  \rightarrow u \text{ in  } L^q(\Rn) \quad \text{and} \quad D^{s_j} u_{s_j} \rightharpoonup Du \text{ in } L^p(\Rn)\]
hold as $j \to\infty$.
\end{teo}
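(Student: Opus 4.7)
The plan is to first show that $\{u_s\}$ is uniformly bounded in $L^p(\Rn)$ (not just $\{D^s u_s\}$), then extract weak $L^p$ limits of both $u_s$ and $D^s u_s$ along a subsequence, identify the latter limit as $Du$ via the nonlocal integration by parts formula, and finally upgrade the weak $L^p$ limit of $u_s$ to strong $L^q$ convergence using uniform boundedness in a Bessel space $H^{\bar s,p}$ for $\bar s$ close to $1$.

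First I would write $v_s := u_s - g \in H_0^{s,p}(\O)$, which by the density definition of $H_0^{s,p}(\O)$ vanishes a.e.\ on $\O^c$, so $\|v_s\|_{L^p(\Rn)}=\|v_s\|_{L^p(\O)}$. Combining the Poincar\'e inequality of Theorem \ref{th:Poincare} with $\|D^s v_s\|_p \leq \|D^s u_s\|_p + \|D^s g\|_p$, bounding the last term by $(C/s)\|g\|_{W^{1,p}(\Rn)}$ via Proposition \ref{Lemma: inequality with W1p}, yields a uniform bound on $\|u_s\|_{L^p(\Rn)}$ for $s \in [1/2,1)$.

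Since $\{u_s\}$ and $\{D^s u_s\}$ are bounded in $L^p(\Rn)$ and $1<p<\infty$, by reflexivity and a diagonal extraction I would produce an increasing sequence $s_j\nearrow 1$ along which $u_{s_j}\rightharpoonup u$ and $D^{s_j}u_{s_j}\rightharpoonup w$ in $L^p(\Rn)$. For each $\phi\in C^\infty_c(\Rn,\Rn)$ the integration by parts of Lemma \ref{le:divgrad} gives
\[
\int D^{s_j}u_{s_j}\cdot \phi\, dx \;=\; -\int u_{s_j}\,\diver^{s_j}\phi\, dx,
\]
and I would pass to the limit using the weak $L^p$ convergence of $u_{s_j}$ together with the strong $L^{p'}$ convergence $\diver^{s_j}\phi\to \diver\phi$ provided by Corollary \ref{Cor: strong convergence of the fractional divergence}. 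This yields $\int w\cdot \phi = -\int u\,\diver\phi$ for all $\phi$, identifying $w=Du$ in the distributional sense; since $u,w\in L^p(\Rn)$, this gives $u\in W^{1,p}(\Rn)$ and $D^{s_j}u_{s_j}\rightharpoonup Du$ in $L^p(\Rn)$.

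For the strong $L^q$ convergence, given an admissible $q$ I would choose $\bar s\in(0,1)$ close enough to $1$ so that $q$ lies in the range prescribed by Theorem \ref{Bessel embedding theorem}\,b) for $H^{\bar s,p}$; this is possible because $p^*_{\bar s}\nearrow p^*$ as $\bar s\nearrow 1$. Proposition \ref{Prop: franctional gradients ineq} applied to $v_{s_j}$ (which belongs to $H_0^{s_j,p}(\O)$, and hence to $H_0^{\bar s,p}(\O)$ by the same estimate applied to an approximating $C_c^\infty$ sequence) together with Proposition \ref{Lemma: inequality with W1p} applied to $g$ gives that $\{u_{s_j}\}_{j\geq j_q}$ is bounded in $H_g^{\bar s,p}(\O)$ for some $j_q$ with $s_{j_q}>\bar s$. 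A standard subsequence-of-subsequence argument, exploiting the uniqueness of the weak $L^p$ limit $u$ together with the compactness provided by Theorem \ref{Bessel embedding theorem}\,b), then forces the entire tail $\{u_{s_j}\}_{j\geq j_q}$ to converge strongly to $u$ in $L^q(\Rn)$.

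The main obstacle is the identification $w=Du$ in the second step: this is the precise point where the sequence of nonlocal gradients $D^{s_j}u_{s_j}$ gets linked to the classical gradient, and it rests on the compatibility of the fractional integration by parts of Lemma \ref{le:divgrad} with the localization of the fractional divergence in Corollary \ref{Cor: strong convergence of the fractional divergence}. Everything else reduces to uniform estimates and an application of the compact embedding for Bessel spaces, with the only bookkeeping being the choice of $\bar s$ depending on $q$.
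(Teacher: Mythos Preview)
Your proposal is correct and follows essentially the same approach as the paper: uniform $L^p$ bounds via the Poincar\'e inequality (Theorem~\ref{th:Poincare}), boundedness in a fixed $H^{\bar s,p}$ via Proposition~\ref{Prop: franctional gradients ineq}, compact embedding (Theorem~\ref{Bessel embedding theorem}) for strong $L^q$ convergence, and the identification $w=Du$ via Lemma~\ref{le:divgrad} together with Corollary~\ref{Cor: strong convergence of the fractional divergence}. The only cosmetic differences are that the paper reduces to $g=0$ at the outset and establishes the strong $L^q$ convergence before the identification step (so that it can invoke strong rather than weak convergence of $u_{s_j}$ in the integration-by-parts limit), whereas you keep $g$ and reverse the order; both orderings work.
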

\begin{proof}
	Thanks to Theorem \ref{Prop: convergencia del gradiente fraccionario al clásico} and the Sobolev embedding, we can assume, without loss of generality, that $g=0$.
	
	Fix $0<\bar{s}<s_0<1$. By hypothesis and Proposition \ref{Prop: franctional gradients ineq}, $\{D^{\bar{s}} u_s\}_{s\in [s_0,1)}$ is bounded in $L^p(\Rn,\Rn)$, and consequently, by Corollary \ref{cor: fractional Sobolev inequality definitiva}, $\{u_s\}_{s\in[s_0,1)}$ is bounded in $H^{\bar{s},p}_0(\O)$. Since $H^{\bar{s},p}_0(\O)$ is reflexive, there exist $u\in H^{\bar{s},p}_0(\O)$ and an increasing sequence $\{s_j\}_{j\ge 1}\subset [s_0,1)$, with $\lim_{j\to \infty} s_j =1$, such that 
	\[u_{s_j}\weakc u\quad \text{in } H^{\bar{s},p}_0(\O).\]

Now, if $p\le n$, given $q\in [p,p^*)$ (defining $p^* = \infty$ if $p=n$), there exists $j_q\in\mathbb{N}$ such that for all $j \geq j_0$ we have $q< p^*_{s_j}$.
Arguing as above we obtain that $u_{s_j}\weakc u$ in $H^{s_{j_0},p}_0 (\O)$, so applying Theorem \ref{Bessel embedding theorem}, we have that $\{u_{s_j}\}_{j\ge j_q}\subset L^q(\Rn)$ and 
\[u_{s_j}\to u\quad \text{in }L^q(\Rn).\]
If $p>n$, there exists $j_0 \in \N$ such that $s_{j_0}p>n$, and arguing as above using  again Theorem \ref{Bessel embedding theorem}, we have that $\{u_{s_j}\}_{j\ge j_0}\subset L^q(\Rn)$, for any $q\in [p,+\infty]$, and 
\[u_{s_j}\to u\quad \text{in }L^q(\Rn).\]

	Next, as $\{D^{s_j} u_{s_j} \}_{j \geq j_0}$ is bounded in $L^p(\Rn, \Rn)$, there exists $V\in L^p(\Rn, \Rn)$ such that 
	$D^{s_j}u_{s_j}\rightharpoonup V$ in $L^p(\R^n, \Rn)$ as $j \to \infty$, in principle up to a subsequence, but we will see that in fact it holds true for the whole sequence.
Given $\varphi \in C_c^1(\mathbb{R}^n, \Rn)$, using the fractional integration by parts, Lemma \ref{le:divgrad}, we get
	\begin{equation*}
	\int D^{s_j} u_{s_j}(x) \cdot \varphi(x) \, dx = -\int u_{s_j}(x) \diver^{s_j} \varphi(x) \, dx,
	\end{equation*}
	and passing to the limit as $j \to \infty$, having in mind that both $u_{s_j}$ and $\diver^{s_j} \varphi$ are strongly convergent (Corollary \ref{Cor: strong convergence of the fractional divergence}), 
	we obtain
	\begin{equation*}
	\int V(x) \cdot \varphi(x)\,dx = -\int u(x) \diver \varphi(x) \, dx ,
	\end{equation*}
and hence $Du=V$ and $u\in W^{1,p}(\Rn)$.
Since this $V$ is unique, this shows that $D^{s_j} u_{s_j} \rightharpoonup V$ in $L^p(\R^n, \Rn)$ as $j \to \infty$ without the need of taking a subsequence.
	This finishes the proof.
\end{proof}

\section{Weak continuity of the minors for varying $s$}\label{sc:weakcontinuity}

In this section we prove the analogue in this context of the weak continuity of minors, namely, that if we have a sequence $\{ u_s \}_{s \in (0,1)}$ such that $u_s \in H^{s,p}$ for each $s$ and $D^s u_s \weakc D u$ in $L^p$ as $s \nearrow 1$ for some $u \in W^{1,p}$ then the minors of $D^s u_s$ converge weakly in some $L^q$ to the minors of $D u$.
For this, we follow the general guidelines of \cite{BeCuMC}, where the analogue convergence for a fixed $s$ is proved.
In essence, this section consists of an adaptation of many results of \cite{BeCuMC} with bounds that do not depend on $s$.

We start with an analogue of \cite[Lemma 3.1]{BeCuMC}.

\begin{lem} \label{Lema difference quotient bound}
	Let $\varphi \in C^{\infty}_c (\mathbb{R}^n)$ and let $\bar{\alpha} \in (0,1)$.
	Then
	\[
	\sup_{s \in (\bar{\alpha}, 1)} \sup_{x \in \Rn} c_{n,s} \int \frac{\left| \varphi(x)-\varphi(y) \right|}{|x-y|^{n+s}} dy <\infty \quad \text{and} \quad \sup_{s \in (\bar{\alpha}, 1)} \sup_{r \in [1,\infty]} \left\| D^s \varphi \right\|_r < \infty .
	\]
\end{lem}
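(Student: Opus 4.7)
The plan is to first handle the pointwise integral bound by the classical splitting of the difference-quotient integral into a near region and a far region, and then deduce the $L^r$ estimate from endpoint bounds together with interpolation.

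For the first supremum, fix $x \in \Rn$ and $s \in (\bar{\alpha}, 1)$ and split
\[
\int \frac{|\varphi(x)-\varphi(y)|}{|x-y|^{n+s}}\,dy = \int_{B(x,1)} + \int_{B(x,1)^c}.
\]
On $B(x,1)$ I would use the Lipschitz estimate $|\varphi(x)-\varphi(y)| \leq \|D\varphi\|_\infty |x-y|$, which produces a polar integral of $|x-y|^{1-n-s}$ and gives a factor $\sigma_{n-1}/(1-s)$. Multiplying by $c_{n,s}$ cancels the singular factor thanks to Lemma \ref{Lemma: bound of constant c_{n,s}}, since $\sup_{s \in [-1,1)} c_{n,s}/(1-s) < \infty$. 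On $B(x,1)^c$ I would bound $|\varphi(x)-\varphi(y)| \leq 2\|\varphi\|_\infty$ and integrate $|x-y|^{-n-s}$, obtaining the factor $\sigma_{n-1}/s \leq \sigma_{n-1}/\bar{\alpha}$; multiplied by the uniformly bounded $c_{n,s}$ this also yields a finite supremum. Adding the two contributions gives the first bound, uniform in $x$ and in $s \in (\bar{\alpha},1)$.

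For the second supremum, the $r=\infty$ case is immediate, since $|D^s\varphi(x)|$ is pointwise dominated by $c_{n,s}\int |\varphi(x)-\varphi(y)||x-y|^{-n-s}\,dy$, which has just been bounded uniformly. For $r=1$ I would invoke Proposition \ref{Lemma: inequality with W1p} with $p=1$, giving $\|D^s\varphi\|_1 \leq \frac{C}{s}\|\varphi\|_{W^{1,1}(\Rn)} \leq \frac{C}{\bar{\alpha}}\|\varphi\|_{W^{1,1}(\Rn)}$, again uniform in $s\in(\bar{\alpha},1)$. The intermediate exponents $r \in (1,\infty)$ follow from the standard interpolation inequality $\|D^s\varphi\|_r \leq \|D^s\varphi\|_1^{1/r}\|D^s\varphi\|_\infty^{1-1/r}$, so the supremum over $r\in[1,\infty]$ is finite.

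The only genuinely delicate point is the near-field piece of the first estimate: naively $\int_{B(x,1)} |x-y|^{1-n-s}\,dy$ blows up as $s \nearrow 1$, and the whole argument only works because this divergence is exactly compensated by the vanishing of $c_{n,s}$ at rate $1-s$, as quantified by Lemma \ref{Lemma: bound of constant c_{n,s}}. Once that cancellation is recognised, the rest of the proof is routine splitting, H\"older/interpolation, and keeping track that the resulting constants depend only on $\varphi$ and on $\bar{\alpha}$ (through the lower bound on $s$), not on $s$ itself.
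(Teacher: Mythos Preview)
Your proof is correct and follows the same overall architecture as the paper's: split the difference-quotient integral into near and far parts, use the Lipschitz bound on $B(x,1)$ together with the cancellation $c_{n,s}/(1-s)$ from Lemma~\ref{Lemma: bound of constant c_{n,s}}, then obtain the $L^r$ bounds from the endpoints $r=1$ and $r=\infty$ by interpolation.

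There are two places where you take a shorter route than the paper. First, on the far region $B(x,1)^c$ you use the crude bound $|\varphi(x)-\varphi(y)|\leq 2\|\varphi\|_\infty$, whereas the paper fixes an auxiliary $\alpha\in(0,\bar{\alpha})$ and uses the $\alpha$-H\"older seminorm of $\varphi$ to get $\sigma_{n-1}C/(s-\alpha)$; your version is simpler and equally sufficient here (the paper's H\"older device is really only needed in the next lemma, Lemma~\ref{Lema operador lineal}, where it is reused). Second, for the $L^1$ bound the paper carries out a direct Fubini argument exploiting the compact support of $\varphi$, while you simply invoke Proposition~\ref{Lemma: inequality with W1p} with $p=1$; this is legitimate since that proposition is stated for $1\leq p<\infty$ and there is no circularity. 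Both shortcuts are valid and make your argument slightly leaner without losing anything.
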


\begin{proof}
	Fix $\alpha \in (0, \bar{\alpha})$.
	Let $L$ and $C$ be, respectively, the Lipschitz and $\alpha$-H\"older constants of $\varphi$.
	Then, for every $x \in \mathbb{R}^n$,
	\begin{equation} \label{eq:infty norm bound}
	\begin{split}
	\int \frac{\left| \varphi(x)-\varphi(y) \right|}{|x-y|^{n+s}} \, dy &\leq \int_{B(x,1)} \frac{L}{|x-y|^{n+s-1}} \, dy+ \int_{B(x,1)^c} \frac{C}{|x-y|^{n+s-\alpha}} \, dy \\
	& = \int_{B(0,1)} \frac{L}{|z|^{n+s-1}} \, dz + \int_{B(0,1)^c} \frac{C}{|z|^{n+s-\alpha}} \, dz = \frac{\sigma_{n-1} L}{1-s} + \frac{\sigma_{n-1} C}{s - \alpha}.
	\end{split}
	\end{equation}
	By Lemma \ref{Lemma: bound of constant c_{n,s}}, we find that
	\[
	\sup_{s \in (\bar{\alpha}, 1)} \sup_{x \in \Rn} c_{n,s} \int \frac{\left| \varphi(x)-\varphi(y) \right|}{|x-y|^{n+s}} dy <\infty ,
	\]
	and, as a consequence,
	\[
	\sup_{s \in (\bar{\alpha}, 1)} \left\| D^s\varphi \right\|_{\infty} < \infty .
	\]
	
	Denote by $F$ the support of $\varphi$.
	Then
	\begin{equation} \label{eq:integrability of smooth functions 0}
	\int\left| c_{n,s}\int \frac{\varphi(x)-\varphi(y)}{|x-y|^{n+s}}\frac{x-y}{|x-y|} dy\right| dx \leq |c_{n,s}| \left( A + B \right) ,
	\end{equation}
	where
	\[
	A :=\int \int_{F} \frac{\left| \varphi(x)-\varphi(y) \right|}{|x-y|^{n+s}} dy \, dx, \qquad
	B :=\int \int_{F^c} \frac{\left| \varphi(x)-\varphi(y) \right|}{|x-y|^{n+s}} dy \, dx .
	\]
	Now, we observe that, applying Fubini's Theorem and \eqref{eq:infty norm bound},
	\begin{equation} \label{eq:integrability of smooth functions 1}
	A = \int_{F}\int  \frac{\left| \varphi(x)-\varphi(y) \right|}{|x-y|^{n+s}}  dx \, dy \leq \left( \frac{\sigma_{n-1} L}{1-s} + \frac{\sigma_{n-1} C}{s - \alpha} \right) |F|,
	\end{equation}
	where $|F|$ denotes the measure of the set $F$.
	
	We notice that $\left| \varphi(x)-\varphi(y) \right|=0$ for every $(x,y) \in F^c \times F^c$. Therefore, applying again \eqref{eq:infty norm bound} we get
	\begin{equation}\label{eq:integrability of smooth functions 2}
	B =\int_{F}\int_{F^c} \frac{\left| \varphi(x)-\varphi(y) \right|}{|x-y|^{n+s}}  dy \, dx \leq \int_{F}\int \frac{\left| \varphi(x)-\varphi(y) \right|}{|x-y|^{n+s}}  dy \, dx \leq \left( \frac{\sigma_{n-1} L}{1-s} + \frac{\sigma_{n-1} C}{s - \alpha} \right) |F|.
	\end{equation}
	Putting together \eqref{eq:integrability of smooth functions 0}, \eqref{eq:integrability of smooth functions 1} and \eqref{eq:integrability of smooth functions 2} we have that
	\[
	\| D^s \varphi \|_1 \leq 2 c_{n,s}\left( \frac{\sigma_{n-1} L}{1-s} + \frac{\sigma_{n-1} C}{s - \alpha} \right) |F| .
	\]
	Applying now Lemma \ref{Lemma: bound of constant c_{n,s}} we infer that
	\[
	\sup_{s \in (\bar{\alpha}, 1)} \| D^s \varphi \|_1 < \infty .
	\]
	Finally, through a standard interpolation argument, we get that
	\[
	\sup_{s \in (\bar{\alpha}, 1)} \sup_{r \in [1,\infty]} \left\| D^s \varphi \right\|_r < \infty .
	\] 
\end{proof}

We now recall a nonlocal operator related to the fractional gradient introduced in \cite{BeCuMC}.
Moreover, we adapt \cite[Lemma 3.2]{BeCuMC} to show bounds of this operator independent of $s$.

\begin{lem} \label{Lema operador lineal} 
	Let $1\leq q < \infty$ and $0 < s < 1$.
	Let $\varphi \in C^{\infty}_c (\mathbb{R}^n)$, $k \in \N$ and $r \in [1,q]$.
	Then, the operator $K^s_{\varphi}: L^q(\mathbb{R}^n, \mathbb{R}^{k \times n}) \rightarrow L^r (\mathbb{R}^n, \mathbb{R}^k)$ defined as
	\begin{equation*}
	K^s_{\varphi}(U)(x)= c_{n,s} \int \frac{\varphi(x)-\varphi(y)}{|x-y|^{n+s}} U(y) \frac{x-y}{|x-y|} dy , \qquad \text{a.e. } x \in \mathbb{R}^n ,
	\end{equation*}
	is linear and bounded.
	Moreover, given $0 < \bar{\alpha} < 1$, there exists a constant $C=C(n,q,\bar{\alpha}, \varphi)$ such that for every $s \in (\bar{\alpha}, 1)$, every $r \in [1,q]$ and $U \in L^q(\Rn, \R^{k \times n})$,
	\begin{equation*}
	\| K^s_\varphi(U)\|_r \leq C \left\| U\right\|_q.
	\end{equation*}
\end{lem}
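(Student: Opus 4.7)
The plan is to separate the (trivial) linearity, which follows from linearity of the integral in $U$, from the quantitative bound, and to obtain the $L^q \to L^r$ estimate for every $r \in [1,q]$ by interpolating between the two endpoints $r=q$ and $r=1$. The key observation is that for any measurable $f$ and $r \in [1,q]$, log-convexity of $L^p$-norms gives
\[
\| f \|_r \leq \| f \|_1^{\theta}\, \| f \|_q^{1-\theta} \leq \max\{ \| f \|_1, \| f \|_q \}
\]
with $\frac{1}{r} = \theta + \frac{1-\theta}{q}$, so if both endpoint bounds hold with a common $s$-independent constant, the same constant works for every intermediate $r$. The starting point at both endpoints will be the pointwise estimate $|K^s_\varphi(U)(x)| \leq \int k(x,y)\,|U(y)|\,dy$ with the scalar kernel $k(x,y) := c_{n,s}\,|\varphi(x)-\varphi(y)|/|x-y|^{n+s}$.

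For the endpoint $r=q$ I would apply Schur's test to $k$. The first Schur condition, $\sup_x \int k(x,y)\,dy \leq C$ uniformly in $s \in (\bar\alpha,1)$, is precisely Lemma \ref{Lema difference quotient bound}, and the symmetry $k(x,y)=k(y,x)$ immediately gives the second condition $\sup_y \int k(x,y)\,dx \leq C$. This yields $\| K^s_\varphi(U) \|_q \leq C \| U \|_q$ with $C$ depending only on $n$, $\bar\alpha$ and $\varphi$.

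For the endpoint $r=1$ I would use Fubini and Hölder:
\[
\| K^s_\varphi(U) \|_1 \leq \int |U(y)|\, h(y)\,dy \leq \| h \|_{q'}\, \| U \|_q, \qquad h(y) := \int k(x,y)\,dx,
\]
so the task reduces to a uniform $L^{q'}$ bound on $h$. The symmetric Schur estimate already gives $\| h \|_\infty \leq C$ uniformly (which settles the case $q=1$ at once). For $q > 1$ I would pick $R$ with $\operatorname{supp}\varphi \subset B(0,R)$ and observe that for $|y| > 2R$ the integrand vanishes unless $x \in \operatorname{supp}\varphi$, in which case $|x-y|\geq |y|/2$ and $\varphi(y)=0$, so
\[
h(y) \leq \frac{c_{n,s}\, 2^{n+s}}{|y|^{n+s}}\, \| \varphi \|_1.
\]
Since $c_{n,s}$ is bounded uniformly by Lemma \ref{Lemma: bound of constant c_{n,s}} and $q'(n+\bar\alpha) > n$, the integral of $|y|^{-q'(n+s)}$ over $\{|y|>2R\}$ is finite and uniformly bounded in $s \in (\bar\alpha,1)$; the part $\{|y|\leq 2R\}$ is controlled by the uniform $L^\infty$ bound on a set of fixed finite measure. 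Combining both contributions yields $\| h \|_{q'} \leq C$ uniformly in $s$.

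The main obstacle is precisely this uniform far-field decay of $h$: here the compact support of $\varphi$, the uniform boundedness of $c_{n,s}$ coming from Lemma \ref{Lemma: bound of constant c_{n,s}}, and the strict inequality $q'(n+\bar\alpha) > n$ guaranteed by the restriction $s > \bar\alpha$ all play a role, and this is the reason the lemma is stated for $s \in (\bar\alpha,1)$ rather than for all $s \in (0,1)$. Once both endpoint estimates are in place, the interpolation inequality above closes the proof with a single constant valid for every $r \in [1,q]$.
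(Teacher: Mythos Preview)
Your proof is correct and follows the same overall architecture as the paper's: establish the $L^q\!\to\! L^q$ and $L^q\!\to\! L^1$ endpoint bounds with constants uniform in $s\in(\bar\alpha,1)$, then interpolate. The difference lies in how each endpoint is reached. For $r=q$, the paper splits the inner integral at $|x-y|=1$, uses the Lipschitz and $\alpha$-H\"older moduli of $\varphi$ on the two pieces, and carries out a direct H\"older--Fubini computation (essentially a hands-on Schur/Young estimate); you instead invoke Schur's test directly, feeding it the uniform kernel bound from Lemma~\ref{Lema difference quotient bound} and using the symmetry $k(x,y)=k(y,x)$. For $r=1$, the paper splits the $y$-domain into $F=\supp\varphi$ and $F^c$ and applies H\"older together with Lemma~\ref{Lema difference quotient bound} on each piece; you dualize via $\|K^s_\varphi(U)\|_1\le\|h\|_{q'}\|U\|_q$ and control $h$ by the uniform $L^\infty$ bound on $\{|y|\le 2R\}$ and the explicit far-field decay $h(y)\lesssim c_{n,s}|y|^{-(n+s)}$ on $\{|y|>2R\}$. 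Your route is a bit more conceptual and packages the work into standard operator-theoretic tools; the paper's is more self-contained and makes every constant explicit. One small correction to your closing remark: the genuine reason the constant must depend on $\bar\alpha$ is already the invocation of Lemma~\ref{Lema difference quotient bound} (where the $\frac{1}{s-\alpha}$ term appears), not the far-field integrability, since $q'(n+s)>n$ holds for every $s>0$ and $q>1$ anyway.
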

\begin{proof}
	The operator $K^s_{\varphi}$ is clearly linear.
	Let $U \in L^q(\mathbb{R}^n, \mathbb{R}^{k \times n})$.
	For all $x \in \mathbb{R}^n$ we have
	\[
	\left| K^s_{\varphi} (U) (x) \right| \leq |c_{n,s}| \int  \frac{\left| \varphi(x)-\varphi(y) \right|}{|x-y|^{n+s}} \left| U(y) \right| dy ,
	\]
	so
	\begin{equation}\label{eq:K1}
	\left| K^s_{\varphi} (U) (x) \right|^q \leq 2^{q-1}|c_{n,s}|^q \left( g(x) + h(x) \right) ,
	\end{equation}
	with
	\[
	g(x) := \left(\int_{B(x,1)} \frac{\left| \varphi(x)-\varphi(y) \right|}{|x-y|^{n+s}} \left| U(y) \right| dy \right)^q, \qquad
	h(x) :=\left(\int_{B(x,1)^c} \frac{\left| \varphi(x)-\varphi(y) \right|}{|x-y|^{n+s}} \left| U(y) \right| dy \right)^q.
	\]
	Fix $\alpha \in (0, \bar{\alpha})$.
	Let $L$ and $L_{\alpha}$ be the Lipschitz and $\alpha$-H\"older semi-norm, respectively, of $\varphi$.
	Then, applying H\"older's inequality, we get
	\begin{align*}
	g(x) & \leq L^q \left(\int_{B(x,1)} \frac{|U(y)|}{|x-y|^{n+s-1}}dy \right)^q = L^q \left(\int_{B(0,1)}  \frac{|U(x-z)|}{|z|^{n+s-1}}dz \right)^q \\
	& \leq L^q \int_{B(0,1)} \frac{|U(x-z)|^q}{|z|^{n+s-1}}dz \left(\int_{B(0,1)} \frac{1}{|z|^{n+s-1}}dz\right)^{q-1} \\
	&= L^q \left(\frac{\sigma_{n-1}}{1-s}\right)^{q-1} \int_{B(0,1)}  \frac{|U(x-z)|^q}{|z|^{n+s-1}}dz ,
	\end{align*}
	where $\sigma_{n-1}$ is the area of the unit sphere of $\Rn$.
	Integrating,
	\begin{equation}\label{eq:K2}
	\int  g(x)\,dx \leq L^q \left(\frac{\sigma_{n-1}}{1-s}\right)^{q-1} \int_{B(0,1)}\frac{1}{|z|^{n+s-1}} \int  |U(x-z)|^q dx \,dz = L^q \left(\frac{\sigma_{n-1}}{1-s}\right)^{q} \left\| U\right\|_q^q . 
	\end{equation}
	
	As for the term $h$, applying H\"older's inequality,
	\begin{align*}
	h(x) & \leq L_{\alpha}^q \left( \int_{B(x,1)^c} \frac{|U(y)|}{|x-y|^{n+s-\alpha}} dy \right)^q  \\
	& \leq L_{\alpha}^q \int_{B(0,1)^c}\frac{\left| U(x-z) \right|^q}{|z|^{n+s-\alpha}} dz \left(\int_{B(0,1)^c}\frac{1}{|z|^{n+s-\alpha}}dz\right)^{{q-1}} \\
	&=  L_{\alpha}^q \left(\frac{\sigma_{n-1}}{s-\a}\right)^{q-1} \int_{B(0,1)^c}\frac{\left| U(x-z) \right|^q}{|z|^{n+s-\alpha}} dz.
	\end{align*}
	Integrating,
	\begin{equation}\label{eq:K3}
	\int h(x)\,dx \leq L_{\alpha}^q \left(\frac{\sigma_{n-1}}{s-\a}\right)^{q-1} \int_{B(0,1)^c}\frac{1}{|z|^{n+s-\alpha}} \int | U(x-z)|^qdx  \,dz = L_{\alpha}^q \left(\frac{\sigma_{n-1}}{s-\a}\right)^q \left\| U\right\|_q^q .
	\end{equation}
	Putting together \eqref{eq:K1}, \eqref{eq:K2} and \eqref{eq:K3} we obtain
	\[
	\left\| K^s_{\varphi}(U)\right\|_q^{q} \leq 2^{q-1} |c_{n,s}|^q \left( L^q \left(\frac{\sigma_{n-1}}{1-s}\right)^{q}+L_{\alpha}^q \left(\frac{\sigma_{n-1}}{s-\a}\right)^q\right) \left\| U\right\|_q^q,
	\]
	so applying Lemma \ref{Lemma: bound of constant c_{n,s}} we find that
	\begin{equation}\label{eq:interpol1}
	\left\| K^s_{\varphi}(U)\right\|_q \leq C \left\| U\right\|_q 
	\end{equation}
	for some constant $C$ independent of $s \in (\bar{\alpha}, 1)$ and $U$.
	
	Next, we are going to check the boundedness of $K^s_\varphi:L^q(\mathbb{R}^n, \R^{k \times n}) \rightarrow L^1(\mathbb{R}^n, \R^k)$.
	Denote by $F$ the support of $\varphi$.
	Then
	\begin{equation}\label{eq:2K1}
	\int \left| K^s_{\varphi} (U) (x) \right| dx \leq |c_{n,s}| \left( A + B \right) ,
	\end{equation}
	where
	\[
	A :=\int \int_{F} \frac{\left| \varphi(x)-\varphi(y) \right|}{|x-y|^{n+s}} \left| U(y) \right| dy \, dx, \qquad
	B :=\int \int_{F^c} \frac{\left| \varphi(x)-\varphi(y) \right|}{|x-y|^{n+s}} \left| U(y) \right| dy \, dx .
	\]
	Now, we observe that, applying Fubini's Theorem, H\"older's inequality and Lemma \ref{Lema difference quotient bound} there exists $C_0>0$ independent of $s \in (\bar{\alpha}, 1)$ such that
	\begin{equation}\label{eq:2K2}
	A \leq \int_{F}\left| U(y) \right|\int  \frac{\left| \varphi(x)-\varphi(y) \right|}{|x-y|^{n+s}}  dx \, dy\leq C_0|F|^{\frac{1}{q'}}\left(\int_{F}\left| U(y)\right|^q dy\right)^{\frac{1}{q}} \leq C_0|F|^{\frac{1}{q'}} \left\| U\right\|_q .
	\end{equation}
	Since $\left| \varphi(x)-\varphi(y) \right|=0$ for every $(x,y) \in F^c \times F^c$, in view of H\"older's inequality and Lemma \ref{Lema difference quotient bound} we get
	\begin{align*}
	B &=\int_{F}\int_{F^c} \frac{\left| \varphi(x)-\varphi(y) \right|}{|x-y|^{n+s}} \left| U(y) \right| dy \, dx \\
	&\leq \int_{F}\left(\int_{F^c} \frac{\left| \varphi(x)-\varphi(y) \right|}{|x-y|^{n+s}}dy\right)^{\frac{1}{q'}}\left(\int_{F^c}\frac{\left| \varphi(x)-\varphi(y) \right|}{|x-y|^{n+s}}\left| U(y) \right|^q dy\right)^{\frac{1}{q}}dx \\
	&\leq C_0^{\frac{1}{q'}}  \int_{F}\left(\int_{F^c}\frac{\left| \varphi(x)-\varphi(y) \right|}{|x-y|^{n+s}}\left| U(y) \right|^q dy\right)^{\frac{1}{q}}dx.
	\end{align*}
	Using again H\"older's inequality, Lemma \ref{Lema difference quotient bound} and Fubini's Theorem, we obtain
	\begin{equation}\label{eq:2K3}
	\begin{split}
	B &\leq C_0^{\frac{1}{q'}}  \left(\int_{F}\int_{F^c}\frac{\left| \varphi(x)-\varphi(y) \right|}{|x-y|^{n+s}}\left| U(y) \right|^q dy \, dx\right)^{\frac{1}{q}} \left|F \right|^{\frac{1}{q'}} \\
	&= \left( C_0 |F| \right)^{\frac{1}{q'}}\left(\int_{F^c}\left| U(y) \right|^q \int_{F}\frac{\left| \varphi(x)-\varphi(y) \right|}{|x-y|^{n+s}} dx \, dy \right)^{\frac{1}{q}} \\
	&\leq
	\left( C_0 |F| \right)^{\frac{1}{q'}}C_0^{\frac{1}{q}}\left(\int_{F^c}\left| U(y) \right|^qdy \right)^{\frac{1}{q}} \leq C \left\| U\right\|_q ,
	\end{split}
	\end{equation}
	where  $C>0$ is a constant independent of $s$ and $U$. 
	Inequalities \eqref{eq:2K1}, \eqref{eq:2K2} and \eqref{eq:2K3} lead us to
	\begin{equation}\label{eq:interpol2}
	\left\| K^s_{\varphi}(U) \right\|_1 \leq C \left\| U \right\|_q,
	\end{equation}
	for some constant $C$ independent of $s \in (\bar{\alpha}, 1)$ and $U$.
	The conclusion of the theorem is obtained through an interpolation of inequalities \eqref{eq:interpol1} and \eqref{eq:interpol2}.
\end{proof}

The following result is the key to adapt the continuity of minors of \cite{BeCuMC} to our case.
It establishes the relationship between the operators $K^s_{\varphi}$ and $D \varphi$ when $s \nearrow 1$.

\begin{lem} \label{lem: weak covergence of the linear operator K}
	Let $p >1$ and $0<s<1$.
	Let $\varphi \in C_c^{\infty}(\Rn)$ and $w \in L^p (\Rn, \Rnn)$.
	Consider a family $\{ w_s \}_{s \in (0,1)}$ in $L^p (\Rn, \Rnn)$ such that $w_s \weakc w$ in $L^p (\Rn, \Rnn)$ as $s \nearrow 1$.
	Then, for all $r \in (1, p]$,
	\[
	K^s_\varphi (w_s) \weakc w \, D \varphi \quad \text{in } L^r (\Rn, \Rn) \text{ as } s \nearrow 1 .
	\]
\end{lem}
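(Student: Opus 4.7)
By Lemma \ref{Lema operador lineal}, the family $\{K^s_\varphi(w_s)\}$ is bounded in $L^r(\Rn, \Rn)$ uniformly in $s$ close to $1$, so to prove weak convergence in $L^r$ it suffices to identify the limit by testing against $\psi$ in the dense subclass $C_c^\infty(\Rn, \Rn)$ of $L^{r'}(\Rn, \Rn)$. For such a $\psi$, Fubini's theorem (whose applicability is underwritten by the uniform bounds of Lemma \ref{Lema difference quotient bound}) yields
\begin{equation*}
\int K^s_\varphi(w_s)(x) \cdot \psi(x) \, dx = \int w_s(y) : \Phi_s(y) \, dy, \quad \Phi_s(y) := c_{n,s} \int \frac{\varphi(x) - \varphi(y)}{|x-y|^{n+s}} \, \psi(x) \otimes \frac{x-y}{|x-y|} \, dx .
\end{equation*}
My strategy is to show that $\Phi_s \to \psi \otimes D\varphi$ strongly in $L^{p'}(\Rn, \Rnn)$; combining this with the weak convergence $w_s \weakc w$ in $L^p$ will then yield $\int K^s_\varphi(w_s) \cdot \psi\, dx \to \int w : (\psi \otimes D\varphi) \, dy = \int (w D\varphi) \cdot \psi\, dy$, which identifies the weak limit as $wD\varphi$.

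To analyse $\Phi_s$, I would decompose it by adding and subtracting $\psi(y)$ in the inner integrand: $\Phi_s = \psi \otimes D^s\varphi + R_s$, where
\begin{equation*}
R_s(y) := c_{n,s} \int \frac{\varphi(x) - \varphi(y)}{|x-y|^{n+s}} \, (\psi(x) - \psi(y)) \otimes \frac{x-y}{|x-y|} \, dx .
\end{equation*}
The first summand converges strongly in $L^{p'}$: since $\varphi \in C_c^\infty \subset W^{1,p'}$, Theorem \ref{Prop: convergencia del gradiente fraccionario al clásico} gives $D^s\varphi \to D\varphi$ in $L^{p'}$, and $\psi$ is bounded, so $\psi \otimes D^s\varphi \to \psi \otimes D\varphi$ in $L^{p'}$. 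The main technical point is thus to prove $R_s \to 0$ in $L^{p'}(\Rn, \Rnn)$.

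For the remainder, using the Lipschitz regularity of $\psi$ for the near-diagonal part $|x-y| \leq 1$ (which contributes an extra factor of $|x-y|$) and the boundedness of $\psi$ together with Lemma \ref{Lema difference quotient bound} for the far part, one obtains a uniform pointwise bound $|R_s(y)| \leq C\, c_{n,s}$ on all of $\Rn$. When $y$ lies outside a fixed enlargement of $F := \supp\varphi \cup \supp\psi$, one has $\varphi(y) = \psi(y) = 0$, and the integrand in $R_s$ vanishes except where $x \in F$; since then $|x-y| \gtrsim |y|$, a direct estimate gives $|R_s(y)| \leq C\, c_{n,s}\, |y|^{-n-s}$. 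By Lemma \ref{Lemma: bound of constant c_{n,s}} we have $c_{n,s} = O(1-s)$, so splitting $\int |R_s|^{p'}\, dy$ over the bounded region (using the uniform bound) and its complement (using the decay bound, which is $L^{p'}$-integrable uniformly for $s$ near $1$) yields $\|R_s\|_{p'} = O(1-s) \to 0$.

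The main obstacle is precisely this $L^{p'}$ estimate on $R_s$: pointwise smallness alone is not sufficient because $\Rn$ has infinite measure, so the argument must exploit the compact supports of $\varphi$ and $\psi$ to produce integrable decay of $R_s$ at infinity and not merely pointwise or uniform vanishing.
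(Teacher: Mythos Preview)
Your proposal is correct and takes a genuinely different route from the paper. The paper argues componentwise: assuming first that each entry $u_s$ of $w_s$ lies in $W^{1,p}$, it invokes the fractional product rule $D^s(\varphi u_s)=\varphi D^s u_s + K^s_\varphi(u_s I)$ from \cite[Lemma~3.4]{BeCuMC}, pairs with a test function $\theta$, integrates by parts via Lemma~\ref{le:divgrad}, and passes to the limit using the strong convergence $\diver^s\theta\to\diver\theta$ and $\diver^s(\theta\varphi)\to\diver(\theta\varphi)$ from Corollary~\ref{Cor: strong convergence of the fractional divergence}; the $W^{1,p}$ hypothesis is then removed by density. Your approach is dual: you move all the $s$-dependence onto the test side by Fubini, identify $\Phi_s=\psi\otimes D^s\varphi + R_s$, and use Theorem~\ref{Prop: convergencia del gradiente fraccionario al clásico} together with a direct $L^{p'}$ estimate on $R_s$ exploiting $c_{n,s}=O(1-s)$ from Lemma~\ref{Lemma: bound of constant c_{n,s}}. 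This avoids both the external product rule and the density step, and yields the slightly sharper information that $\Phi_s\to\psi\otimes D\varphi$ \emph{strongly} in $L^{p'}$; the price is the explicit tail estimate on $R_s$, which the paper's structural argument sidesteps.
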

\begin{proof}
	Assume first that $w_s \in W^{1,p} (\Rn, \Rnn)$ for all $s \in (0,1)$.
	Fix two indexes $1 \leq i,j \leq n$, let $u_s$ be the $(i,j)$-th entry of $w_s$ and let $u$ be the $(i,j)$-th entry of $w$.
	Let $\theta \in C_c^{\infty}(\Rn)$.
	We apply the product formula of \cite[Lemma 3.4]{BeCuMC} and then Lemma \ref{le:divgrad} to obtain
	\[
	\int \theta \, K^s_\varphi (u_s I) = \int \theta \, D^s(\varphi u_s) - \int \theta \, \varphi \, D^s u_s = - \int \varphi \, u_s \diver^s \theta + \int u_s \diver^s (\theta \varphi) .
	\]
	Now we have from Corollary \ref{Cor: strong convergence of the fractional divergence} that $\diver^s \theta \to \diver \theta$ and $\diver^s (\theta\varphi) \to \diver^s (\theta\varphi)$ in $L^q(\Rn)$ for every $q \in (1, \infty)$ as $s \nearrow 1$.
	As $u_s \weakc u$ in $L^p (\Rn)$ we obtain
	\[
	\int \theta \, K^s_\varphi(u_s I) \rightarrow - \int \varphi \, u  \diver \theta + \int u \diver(\theta \varphi) = \int \theta \, u \, D \varphi .
	\]
	This shows that $K^s_\varphi (u_s I) \weakc u \, D \varphi$ in the sense of distributions.
	Now, by Lemma \ref{Lema operador lineal}, for every $r \in (1,p]$
	\[
	\left\| K^s_\varphi (u_s I) \right\|_r \leq C \left\| u_s \right\|_p \leq C_1
	\]
	for some $C, C_1 >0$ independent of $s$, which implies that $K^s_\varphi (u_s I) \weakc u \, D \varphi$ in $L^r (\Rn)$.
	
	Now, we remove the assumption $w_s \in W^{1,p} (\Rn)$.
	Fix $r \in (1, p]$.
	For each $s \in (0, 1)$, let $v_s \in W^{1,p} (\Rn)$ be such that $\left\| u_s - v_s \right\|_r \leq 1-s$.
	By Lemma \ref{Lema operador lineal},
	\[
	\| K^s_\varphi (u_s I) - K^s_\varphi (v_s I) \|_r = \| K^s_\varphi ( u_s I - v_s I) \|_r \leq C \|  u_s - v_s \|_p \to 0 ,
	\]
	which implies that $K^s_\varphi (u_s I) \weakc u \, D \varphi$ in $L^r (\Rn)$.
	In other words, for each $j \in \{1, \ldots, n\}$, the family of functions
	\[
	x \mapsto c_{n,s} \int \frac{\varphi(x)-\varphi(y)}{|x-y|^{n+s}} u_s (y) \frac{x_j - y_j}{|x-y|} dy
	\]
	converges weakly to $u (D \varphi)_j$ as $s \nearrow 1$.
	Therefore, for each $i \in \{1, \ldots, n\}$, the family of functions
	\[
	x \mapsto \left(  K^s_\varphi (w_s) \right)_{ij} (x) = \sum_{j=1}^n c_{n,s} \int \frac{\varphi(x)-\varphi(y)}{|x-y|^{n+s}} (w_s)_{ij}(y) \frac{x_j - y_j}{|x-y|} dy
	\]
	converges weakly to $\sum_{j=1}^n w_{ij} (D \varphi)_j = (w \, D \varphi)_i$.
	This concludes the proof.
\end{proof}

We now show a convenient notation for submatrices, which is taken from \cite[Def.\ 4.1]{BeCuMC}.

\begin{definicion}\label{de:submatrix}
	Let $k \in \N$ be with $1 \leq k \leq n$.
	Consider indices $1 \leq i_1 < \cdots < i_k \leq n$ and $1 \leq j_1 < \cdots < j_k \leq n$.
	\begin{enumerate}[a)]
		\item
		We define $M=M_{i_1 , \ldots , i_k; j_1 , \ldots , j_k} : \R^{n \times n} \to \R^{k \times k}$ as the map such that $M(F)$ is the submatrix of $F \in \R^{n \times n}$ formed by the rows $i_1 , \ldots , i_k$ and the columns $j_1 , \ldots , j_k$.
		
		\item
		We define $\bar{M} = \bar{M}_{i_1 , \ldots , i_k; j_1 , \ldots , j_k} : \R^{k \times k} \to \Rnn$ as the map such that $\bar{M}(F)$ is the matrix whose rows $i_1 , \ldots , i_k$ and columns $j_1 , \ldots , j_k$ coincide with those of $F$, whereas the rest of the entries are zero.
		
		
		
		\item
		We define $\tilde{N} = \tilde{N}_{i_1 , \ldots , i_k} : \Rn \to \Rn$ as the map such that $\tilde{N} (v)$ is the vector whose entries $i_1 , \ldots , i_k$ coincide with the corresponding entries of $v$, whereas the rest of the entries are zero.
	\end{enumerate}
\end{definicion}

The following is the main result of this section, and shows the weak convergence of the minors of $D^s u_s$ to those of $D u$, whenever $u_s$ converges weakly to $u$.
Of course, by a minor we mean the determinant of a submatrix.
Its proof is an adaptation of \cite[Th.\ 5.2]{BeCuMC}.

\begin{teo}\label{th:wcontdet}
	Let $p \geq n-1$ and $0 < s < 1$.
	Let $g \in W^{1,p} (\Rn, \Rn)$ and $u \in W^{1,p} (\Rn, \Rn)$.
	Let $\{ u_s \}_{s \in (0,1)}$ be a family such that $u_s \in H^{s,p}_g (\O, \Rn)$ for each $s \in (0,1)$, while $u_s \rightarrow u$ in $L^p(\Rn, \Rn)$ and $D^s u_s \weakc D u$ in $L^p (\Rn, \Rnn)$ as $s \nearrow 1$.
	Then
	\begin{enumerate}[a)]
		\item\label{item:wcontdetA} If $k \in\N$ with $1 \leq k \leq n-2$ and $\mu$ is a minor of order $k$ then $\mu (D^s u_s) \weakc \mu (D u)$ in $L^{\frac{p}{k}} (\Rn)$ as $s \nearrow 1$.
		
		\item\label{item:wcontcof} If $\cof D^s u_s \weakc \vartheta$ in $L^q (\Rn, \Rnn)$  as $s \nearrow 1$ for some $q \in [1, \infty)$ and $\vartheta \in L^q (\Rn, \Rnn)$ then $\vartheta = \cof D u$.
		
		\item\label{item:wcontdet}
		Assume $\det D^s u_s \weakc \theta$ in $L^{\ell} (\Rn)$ as $s \nearrow 1$ for some $\ell \in [1, \infty)$ and some $\theta \in L^{\ell} (\Rn)$.
		If $p < n$ assume, in addition, that $\cof D^s u_s \weakc \cof D u$ in $L^q (\Rn, \Rnn)$ as $s \nearrow 1$ for some $q \in ( \frac{p^*}{p^* -1}, \infty)$.
		Then $\theta = \det D u$.
	\end{enumerate}
\end{teo}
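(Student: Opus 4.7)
My plan is to adapt the argument of \cite[Th.\ 5.2]{BeCuMC}, which establishes the analogous weak continuity of minors at a fixed $s$, to the present varying-$s$ setting. The substitutes for the two key fixed-$s$ ingredients, namely boundedness and weak convergence of the operator $K^s_\varphi$, are provided by the uniform-in-$s$ bound of Lemma \ref{Lema operador lineal} and by Lemma \ref{lem: weak covergence of the linear operator K}, whose limit is now $w\,D\varphi$ (rather than an $s$-fractional counterpart). These are precisely the pieces needed to push the BeCuMC strategy through as $s\nearrow1$, with $Du$ playing the role of the weak limit.

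For part \emph{\ref{item:wcontdetA})} I would proceed by induction on $k$. The base case $k=1$ is immediate: first-order minors are entries of $D^s u_s$ and converge weakly to entries of $Du$ in $L^p$ by hypothesis. For the inductive step, fix $2\le k\le n-2$ and a $k$-th minor $\mu=\det M_{i_1,\ldots,i_k;\,j_1,\ldots,j_k}$. Following the BeCuMC template (applied first to smooth approximations of $u_s$ and then passed to the limit), a Laplace expansion along the first row lets one rewrite $\int\varphi\,\mu(D^s u_s)\,dx$ as a finite sum of terms of the form
\[
\int u_s^{i_1}\,\bigl[K^s_\varphi(\mathcal W_s)\bigr]\cdot\tilde N(e_\ell)\,dx,
\]
where each $\mathcal W_s$ is a matrix-valued function whose entries are $(k-1)$-th minors of $D^s u_s$. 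By the inductive hypothesis, $\mathcal W_s$ converges weakly in $L^{p/(k-1)}$ to the corresponding matrix of $(k-1)$-th minors of $Du$; by Theorem \ref{Bessel embedding theorem}, $u_s^{i_1}\to u^{i_1}$ strongly in a suitable $L^q$; and by Lemma \ref{lem: weak covergence of the linear operator K}, $K^s_\varphi(\mathcal W_s)\rightharpoonup(\text{minors of }Du)\,D\varphi$ weakly in some $L^r$ with $r>1$. Passing to the limit in the product of a strongly and a weakly convergent factor yields $\int\varphi\,\mu(D^s u_s)\,dx\to\int\varphi\,\mu(Du)\,dx$. A uniform $L^{p/k}$ bound on $\mu(D^s u_s)$, obtained from Lemma \ref{Lema operador lineal}, H\"older's inequality, and the inductive hypothesis, upgrades this distributional convergence to weak convergence in $L^{p/k}$ by the usual density-plus-reflexivity argument.

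Parts \emph{\ref{item:wcontcof})} and \emph{\ref{item:wcontdet})} follow the same scheme. The same Laplace expansion, now applied to the $(n-1)$-th minors (for the cofactor) and to $\det F=\tfrac1n\tr(F(\cof F)^T)$ (for the determinant), identifies the distributional limit of $\cof D^s u_s$ and $\det D^s u_s$ using the minors handled in part \emph{\ref{item:wcontdetA})}; uniqueness of weak limits then forces $\vartheta=\cof Du$ and $\theta=\det Du$. When $p\ge n$ the cofactors are uniformly bounded in $L^{p/(n-1)}$ and the argument closes, while for $p<n$ the integrability exponents do not close on their own, and the extra hypothesis on the weak convergence of $\cof D^s u_s$ is exactly what is needed to supply the missing strong/weak pairing in the Laplace expansion of $\det D^s u_s$.

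The main obstacle is the bookkeeping of integrability exponents at each step of the induction: one must check that every product entering the Laplace expansion sits in an $L^r$ with $r>1$ so that Lemma \ref{lem: weak covergence of the linear operator K} applies, and that all constants remain uniform in $s$ near $1$. These checks parallel those of \cite[Th.\ 5.2]{BeCuMC}; the genuinely new input is that the operator bounds and the weak convergence of $K^s_\varphi$ hold uniformly in $s$, which is what Lemmas \ref{Lema operador lineal} and \ref{lem: weak covergence of the linear operator K} have just provided.
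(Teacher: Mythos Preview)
Your proposal is correct and follows essentially the same route as the paper's proof: induct on the order of the minor, rewrite $\int\varphi\,\mu(D^s u_s)$ via the nonlocal integration-by-parts identity of \cite[Lemma~5.1]{BeCuMC} as a pairing between $u_s$ and $K^s_\varphi$ applied to lower-order minors, then pass to the limit using Lemma~\ref{lem: weak covergence of the linear operator K} and the strong convergence of $u_s$. Two small corrections: the strong convergence of $u_s$ (in $L^p$ for parts \emph{\ref{item:wcontdetA})}--\emph{\ref{item:wcontcof})}, and in $L^t$ with $t<p^*$ for part \emph{\ref{item:wcontdet})} when $p<n$) comes from the hypothesis together with Theorem~\ref{Pr: weak convergence in s}, not from Theorem~\ref{Bessel embedding theorem}, which is stated for a fixed $s$; and the uniform $L^{p/k}$ bound on $\mu(D^s u_s)$ is just H\"older's inequality on a product of $k$ entries of $D^s u_s$, with no need to invoke Lemma~\ref{Lema operador lineal} or the induction hypothesis.
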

\begin{proof}
	
	We will prove \emph{\ref{item:wcontdetA})} by induction on $k$.
	For $k=1$ there is nothing to prove.
	Assume it holds for some $k \leq n-3$ and let us prove it for $k+1$.
	Let $\mu$ be a minor of order $k+1$.
	In the notation of Definition \ref{de:submatrix}, $\mu (F) = \det M(F)$ for all $F \in \Rnn$, where $M = M_{i_1, \ldots , i_{k+1} ; j_1, \ldots , j_{k+1}}$ for some $1 \leq i_1 < \cdots < i_{k+1} \leq n$ and $1 \leq j_1 < \cdots < j_{k+1} \leq n$.
	Let $\varphi \in C^{\infty}_c (\Rn)$.
	By induction assumption, $\cof M(D^s u_s) \weakc \cof M(D u)$ in $L^{\frac{p}{k}} (\Rn, \R^{(k+1) \times (k+1)})$ as $s \nearrow 1$, so $\bar{M}(\cof M(D^s u_s)) \weakc \bar{M}(\cof M(D u))$ in $L^{\frac{p}{k}} (\Rn, \Rnn)$.
	By Lemma \ref{lem: weak covergence of the linear operator K}, $K^s_{\varphi} (\bar{M}(\cof M(D^s u_s))) \weakc \bar{M}(\cof M(Du)) \, D\varphi$ in $L^r (\Rn, \Rn)$ for every $r \in (1, \frac{p}{k}]$.
	By Theorem \ref{Pr: weak convergence in s}, $\tilde{N}(u_s) \to \tilde{N}(u)$ in $L^p (\Rn)$, so 
	\begin{equation}\label{eq:NK}
	\tilde{N}(u_s) \cdot  K^s_{\varphi} (\bar{M}(\cof M(D^su_s))) \weakc \tilde{N}(u) \cdot \left( \bar{M}(\cof M(D u)) \, D \varphi \right) \quad \text{in } L^1 (\Rn)
	\end{equation}
	since $\frac{k}{p} + \frac{1}{p} \leq 1$.
	Now, the nonlocal integration by parts for the determinant given in \cite[Lemma 5.1]{BeCuMC} as well as the classical (local) one state that
	\begin{equation}\label{eq:recordarpartes1}
	- \frac{1}{k} \int \tilde{N} (u_s) (x) \cdot K_{\varphi} (\bar{M} (\cof M(D^s u_s))) (x) \, dx = \int \det M(D^s u_s) (x) \, \varphi (x) \, dx 
	\end{equation}
	and
	\begin{equation}\label{eq:recordarpartes2}
	- \frac{1}{k} \int \tilde{N}(u) (x) \cdot \left( \bar{M}(\cof M(D u)) (x) \, D \varphi (x) \right) dx = \int \det M(D u(x)) \, \varphi (x) \, dx ,
	\end{equation}
respectively, so
	\begin{equation}\label{eq:detM}
	\int \det M(D^s u_s (x)) \, \varphi (x) \, dx \to \int \det M(D u(x)) \, \varphi (x) \, dx .
	\end{equation}
	This shows that $\det M (D^s u_s) \weakc \det M (D u)$ in the sense of distributions.
	As $\{ \det M (D^s u_s) \}_{s \in (0,1)}$ is bounded in $L^{\frac{p}{k+1}} (\Rn)$ and $p > k+1$, we have that $\det M (D^s u_s) \weakc \det M (D u)$ in $L^{\frac{p}{k+1}} (\Rn)$ as $s \nearrow 1$.
	
	The proof of \emph{\ref{item:wcontcof})} follows the lines of \emph{\ref{item:wcontdetA})}.
	Let $\mu$ be a minor of order $n-1$.
	As before, $\mu (F) = \det M(F)$ for all $F \in \Rnn$, where $M = M_{i_1, \ldots , i_{n-1} ; j_1, \ldots , j_{n-1}}$ for some  $1 \leq i_1 < \cdots < i_{n-1} \leq n$ and $1 \leq j_1 < \cdots < j_{n-1} \leq n$.
	Let $\phi \in C^{\infty}_c (\O)$.
	By part \emph{\ref{item:wcontdetA})}, $\cof M(D^s u_j) \weakc \cof M(D^s u)$ in $L^{\frac{p}{n-2}} (\Rn, \R^{(n-1) \times (n-1)})$, so 
	$\bar{M}(\cof M(D^s u_s)) \weakc \bar{M}(\cof M(D u))$ in $L^{\frac{p}{n-2}} (\Rn, \Rnn)$.
	By Lemma \ref{lem: weak covergence of the linear operator K}, $K^s_{\varphi} (\bar{M}(\cof M(D^s u_s))) \weakc \bar{M}(\cof M(Du)) \, D\varphi $ in $L^r (\Rn, \Rn)$ for every $r \in (1, \frac{p}{n-2}]$.
	By Theorem \ref{Pr: weak convergence in s}, $\tilde{N}(u_s) \to \tilde{N}(u)$ in $L^p (\Rn)$, so convergence \eqref{eq:NK} is also valid since $\frac{n-2}{p} + \frac{1}{p} \leq 1$.
	Again thanks to \eqref{eq:recordarpartes1}--\eqref{eq:recordarpartes2}, we conclude that convergence \eqref{eq:detM} holds.
	This shows that $\mu (D^s u_s) \weakc \mu (D u)$ in the sense of distributions.
	As this is true for every minor $\mu$ of order $n-1$, we obtain that $\cof D^s u_s \weakc \cof D u$ in the sense of distributions.
	Due to the assumption, $\vartheta = \cof D u$.
	
	We finally show part \emph{\ref{item:wcontdet})}.
	Let $\phi \in C^{\infty}_c (\O)$.
	Assume first $p<n$.
	By the assumption and Lemma \ref{lem: weak covergence of the linear operator K}, $K^s_{\varphi} (\cof D^s u_s) \weakc \cof D u \, D\varphi$ in $L^r (\Rn, \Rn)$ for every $r \in (1, q]$.
	By Theorem \ref{Pr: weak convergence in s}, $u_s \to u$ in $L^t (\Rn)$ for every $t \in [p, p^*)$, so 
	\begin{equation}\label{eq:ucofDu}
	u_s \cdot  K^s_{\varphi} (\cof D^s u_s) \weakc u \cdot \left( \cof D u \, D\varphi \right) \quad \text{in } L^1 (\Rn)
	\end{equation}
	since $\frac{1}{q} + \frac{1}{p^*} < 1$.
	
	Assume now $p \geq n$.
	Then $\{ \cof D^s u_s \}_{s \in (0,1)}$ is bounded in $L^{\frac{p}{n-1}} (\Rn, \Rnn)$ so, thanks to part \emph{\ref{item:wcontcof})}, $\cof D^s u_s \weakc \cof D u$ in $L^{\frac{p}{n-1}} (\Rn, \Rnn)$.
	By Lemma \ref{lem: weak covergence of the linear operator K}, $K^s_{\varphi} (\cof D^s u_s) \weakc \cof D u \, D\varphi$ in $L^r (\Rn, \Rn)$ for every $r \in (1, \frac{p}{n-1}]$.
	By Theorem \ref{Pr: weak convergence in s}, $u_s \to u$ in $L^t (\Rn)$ for every $t \in [1, \infty)$, so convergence \eqref{eq:ucofDu} holds since $p > n-1$.
	
	In either case, we have convergence \eqref{eq:ucofDu}, so by the analogue of \eqref{eq:recordarpartes1}--\eqref{eq:recordarpartes2} with $k=n$ we obtain
	\[
	\int \det D^s u_s (x) \, \varphi (x) \, dx \to \int \det D u(x) \, \varphi (x) \, dx .
	\]
	This shows that $\det D^s u_s \weakc \det D u$ in the sense of distributions, so $\theta = \det D u$.
\end{proof}

\section[$\Gamma$-convergence]{$\Gamma$-convergence}\label{se:Gamma}

$\Gamma$-convergence is the main conceptual tool for studying the variational convergence of families of functionals defined on metric spaces \cite{Braides}. In this section we show that the functional 
\[\mathcal{I}_s (u)=\int W(x,u(x),D^su(x))\,dx,\]
defined on $H^{s,p}(\Rn,\Rn)$, $\Gamma$-converges, as $s \nearrow 1$, to the functional
\[\mathcal{I} (u)=  \int W(x,u(x),Du(x))\,dx,\]
defined on $W^{1,p}(\Rn,\Rn)$ under the assumption of $W$ being polyconvex. We recall the concept of polyconvexity (see, e.g, \cite{Ball77,dacorogna}).
Let $\tau$ be the number of submatrices of an $n \times n$ matrix.
We fix a function $\vec \mu : \Rnn \to \R^{\tau}$ such that $\vec \mu (F)$ is the collection of all minors of an $F \in \Rnn$ in a given order.
A function $W_0 : \Rnn \to \R \cup \{\infty\}$ is polyconvex if there exists a convex $\Phi : \R^{\tau} \to \R \cup \{\infty\}$ such that $W_0 (F) = \Phi (\vec \mu (F))$ for all $F \in \Rnn$.
Polyconvexity of $W : \Rn \times \Rn \times \Rnn \to \R \cup \{ \infty \}$ means polyconvexity in the last variable.

It is convenient to consider both $\mathcal{I}_s$ and $\mathcal{I}$ defined on the same functional space independent of $s$, so we consider both functionals defined on $L^p(\Rn,\Rn)$.
The extension of $\mathcal{I}_s$ to $L^p (\Rn,\Rn) \setminus H^{s,p} (\Rn,\Rn)$ and of $\mathcal{I}$ to $L^p (\Rn,\Rn) \setminus W^{1,p} (\Rn,\Rn)$ is done by infinity.
Recalling the definition of $\Gamma$-convergence in this particular situation, we say that $\mathcal{I}_s$ $\Gamma$-converges to $\mathcal{I}$ as $s \nearrow 1$ in the strong topology of $L^p (\Rn, \Rn)$ if the following two conditions hold:
\begin{itemize}
	\item \textit{Liminf inequality:} For every family $\{u_s \}_{s\in (0,1)}$ in $L^p(\mathbb{R}^n,\Rn)$ such that $u_s \rightarrow u$ in $L^p(\mathbb{R}^n,\Rn)$ as $s \nearrow 1$, we have
	\begin{equation*}
	\mathcal{I} (u)\leq \liminf_{s \nearrow 1} \mathcal{I}_s (u_s).
	\end{equation*}
	\item \textit{Limsup inequality:} For each $u \in W^{1,p}(\mathbb{R}^n,\Rn)$, there exists a family $\{u_s \}_{s\in (0,1)}$ $\subset$ $L^p(\mathbb{R}^n,\Rn)$ such that $u_s \rightarrow u$ in $L^p(\mathbb{R}^n,\Rn)$ as $s \nearrow 1$ and
	\begin{equation*}
	\limsup_{s \nearrow 1} \mathcal{I}_s (u_s) \leq \mathcal{I} (u).
	\end{equation*}
\end{itemize}

Although not in the definition of $\Gamma$-convergence, it is customary to attach a compactness property to the conditions above, which, in this context, reads as follows:
\begin{itemize}
	\item \emph{Compactness:} For every $g \in W^{1,p} (\Rn, \Rn)$ and every family $\{ u_s \}_{s \in (0,1)}$ with $u_s = g$ in $\O^c$ for all $s \in (0,1)$ such that $\liminf_{s \nearrow 1} \mathcal{I}_s (u_s) < \infty$, there exist an increasing sequence $\{ s_j \}_{j \in \N}$ in $(0,1)$ with $\lim_{j \to \infty} s_j = 1$ and a $u \in L^p (\Rn, \Rn)$ such that $u_{s_j} \to u$ in $L^p (\Rn, \Rn)$ as $j \to \infty$.
\end{itemize}

The \emph{limsup inequality} will be a consequence of Theorem \ref{Prop: convergencia del gradiente fraccionario al clásico}, while the compactness property will follow of Theorem \ref{Pr: weak convergence in s}.
The \emph{liminf inequality}, on the other hand, is a novel semicontinuity result, which improves that of \cite{BeCuMC} done for a fixed $s$, and is singled out in the following proposition.
As we will see, the growth conditions for proving the \emph{liminf} and \emph{limsup} inequalitites are compatible only in the range $p > n$.

\begin{prop}\label{prop:lsc}
	Let $p \geq n-1$ satisfy $p>1$ and $0 < s < 1$. Let $\O$ be a bounded open subset of $\Rn$ and $g\in W^{1,p}(\Rn,\Rn)$. 
	Let $W : \Rn \times \Rn \times \Rnn \to \R \cup \{ \infty \}$ satisfy the following conditions:
	\begin{enumerate}[a)]
		
		\item $W$ is $\mathcal{L}^n \times \mathcal{B}^n \times \mathcal{B}^{n\times n}$-measurable, where $\mathcal{L}^n$ denotes the Lebesgue sigma-algebra in $\Rn$, whereas $\mathcal{B}^n$ and $\mathcal{B}^{n\times n}$ denote the Borel sigma-algebras in $\Rn$ and $\Rnn$, respectively.

\item $W (x, \cdot, \cdot)$ is lower semicontinuous for a.e.\ $x \in \Rn$.

		\item For a.e.\ $x \in \Rn$ and every $y \in \Rn$, the function $W (x, y, \cdot)$ is polyconvex.
		
		\item\label{item:Ecoerc}
		There exist a constant $c>0$, an $a \in L^1 (\Rn)$ and a Borel function $h : [0, \infty) \to [0, \infty)$ such that
		\[
		\lim_{t \to \infty} \frac{h (t)}{t} = \infty
		\]
		and
		\[
		\begin{cases}
		W (x, y, F) \geq a (x) + c \left| F \right|^p + c \left| \cof F \right|^q + h (\left| \det F \right|)  \quad \text{for some } q > \frac{p^*}{p^* -1},  & \text{if } p < n ,\\
		W (x, y, F) \geq a (x) + c \left| F \right|^p + h (\left| \det F \right|) , & \text{if } p = n , \\
		W (x, y, F) \geq a (x) + c \left| F \right|^p , & \text{if } p > n ,
		\end{cases}
		\]
		for a.e.\ $x \in \Rn$, all $y \in \Rn$ and all $F \in \Rnn$.
	\end{enumerate}
For each $s\in (0,1)$, let $u_s\in H^{s,p}_g(\O,\Rn)$ and $u\in W^{1,p} (\O,\Rn)$ satisfy $u_s\to u$ in $L^p(\O,\Rn)$ as $s \nearrow 1$.
Then 
		\begin{equation}\label{eq:liminf}
		\mathcal{I}(u) \leq \liminf_{s\nearrow 1} \mathcal{I}_s (u_s)
		\end{equation}
\end{prop}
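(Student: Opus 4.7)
The plan is to extract a minimizing subsequence, use the coercivity hypothesis to gain weak compactness of $D^{s_j} u_{s_j}$ and of all its minors in suitable Lebesgue spaces, identify the weak limits via Theorems \ref{Pr: weak convergence in s} and \ref{th:wcontdet}, and conclude with a classical polyconvex lower semicontinuity theorem.

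First, I would reduce to the case $\liminf_{s \nearrow 1} \mathcal{I}_s(u_s) < \infty$ and select an increasing sequence $s_j \nearrow 1$ realizing this liminf, so that $\mathcal{I}_{s_j}(u_{s_j})$ stays bounded. Coercivity \emph{\ref{item:Ecoerc})} then yields that $\{D^{s_j} u_{s_j}\}$ is bounded in $L^p(\Rn, \Rnn)$; in the regime $p < n$ also $\{\cof D^{s_j} u_{s_j}\}$ is bounded in $L^q$ with $q > p^*/(p^*-1)$; and whenever $p \leq n$, the superlinear growth of $h$ together with the de la Vallée-Poussin criterion gives equi-integrability of $\{\det D^{s_j} u_{s_j}\}$, hence weak compactness in $L^1$. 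When $p \geq n$, the bounds on cofactor and determinant come for free from the $L^p$ bound on $D^{s_j} u_{s_j}$ via the pointwise estimates $|\cof F| \leq C|F|^{n-1}$ and $|\det F| \leq |F|^n$.

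Applying Theorem \ref{Pr: weak convergence in s} and using the hypothesis $u_s \to u$ in $L^p$ (extended trivially to $\Rn$ since $u_s = g = u$ on $\O^c$), we extract a further subsequence so that $D^{s_j} u_{s_j} \weakc Du$ weakly in $L^p(\Rn, \Rnn)$. Theorem \ref{th:wcontdet} then identifies every weak minor limit as the corresponding minor of $Du$: part \emph{\ref{item:wcontdetA})} for minors of order $k \leq n-2$, part \emph{\ref{item:wcontcof})} for the cofactor (using the cofactor bound above), and part \emph{\ref{item:wcontdet})} for the determinant. Thus, passing to one further subsequence, the full vector of minors satisfies $\vec\mu(D^{s_j} u_{s_j}) \weakc \vec\mu(Du)$ weakly in $L^1_{\loc}(\Rn)$.

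To conclude, I would use the polyconvex decomposition $W(x, y, F) = \Phi(x, y, \vec\mu(F))$ with $\Phi(x, y, \cdot)$ convex, and invoke the classical Ioffe-type lower semicontinuity theorem for normal integrands convex in the last argument (see \cite{dacorogna}). Since $u_{s_j} \to u$ in $L^p$ (hence in measure) and $\vec\mu(D^{s_j} u_{s_j}) \weakc \vec\mu(Du)$ in $L^1_{\loc}(\Rn)$, this yields the desired inequality \eqref{eq:liminf}. The main technical hurdle is the determinant case when $p \leq n$: the equi-integrability of $\{\det D^{s_j} u_{s_j}\}$ provided by $h(|\det F|)$ only guarantees weak $L^1$-compactness, and one must verify that the hypotheses of Theorem \ref{th:wcontdet}\,\emph{\ref{item:wcontdet})} are met in this borderline regime (in particular, that the weak cofactor convergence in $L^q$ with $q > p^*/(p^*-1)$ is at hand for $p < n$). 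A secondary point of care is constructing $\Phi$ as a normal integrand from the polyconvexity of $W$, so that Ioffe's theorem is applicable; this is standard but requires verifying the joint measurability/lower semicontinuity statements.
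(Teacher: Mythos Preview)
Your proposal is correct and follows essentially the same route as the paper: reduce to a finite $\liminf$, extract a realizing sequence, use coercivity (and de la Vall\'ee Poussin for the determinant when $p\le n$) to obtain weak compactness, invoke Theorems \ref{Pr: weak convergence in s} and \ref{th:wcontdet} to identify all minor limits as $\vec\mu(Du)$, and conclude by a standard polyconvex lower semicontinuity result. The only step you leave implicit that the paper spells out is the passage from bounded domains to all of $\Rn$: the paper applies the semicontinuity theorem on each ball $B(0,R)$, subtracts the integrable lower bound $a$ so that the integrand is nonnegative, and then lets $R\to\infty$ by monotone convergence.
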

\begin{proof}
We can assume that 
\begin{equation}\label{eq:Isusfinite}
\liminf_{s\nearrow 1} \mathcal{I}_s(u_s) < \infty ,
\end{equation}
hence by assumption \emph{\ref{item:Ecoerc})}, there exists an increasing sequence $\{ s_j \}_{j \in \N}$ in $(0,1)$ with $\lim_{j \to \infty} s_j = 1$ such that $\liminf_{s\nearrow 1} \mathcal{I}_s(u_s) = \lim_{j \to \infty} \mathcal{I}_{s_j} (u_{s_j})$ and the sequence $\{D^{s_j} u_{s_j} \}_{j \in \N}$ is bounded in $L^p(\Rn,\Rn)$, so by Theorem \ref{Pr: weak convergence in s}, for a subsequence (not relabelled),
	\begin{equation}\label{eq:convergence1poly}
	u_{s_j} \to u \quad \text{and} \quad D^{s_j} u_{s_j} \weakc Du  \qquad \text{in } L^p \text{ as } j \to \infty .
	\end{equation}
By Theorem \ref{th:wcontdet}, for any minor $\mu$ of order $k \leq n-2$, we have that
	\begin{equation}\label{eq:convergence2poly}
	\mu (D^{s_j} u_{s_j}) \weakc \mu (D u) \text{ in } L^{\frac{p}{k}} (\Rn) \text{ as } j \to \infty .
	\end{equation}
	
If $p <n$ then, by assumption \emph{\ref{item:Ecoerc})}, $\{ \cof D^s u_s \}_{0<s<1}$ is bounded in $L^q (\Rn, \Rnn)$, whereas if $p \geq n$ we set $q:= \frac{p}{n-1}$ and have that $\{ \cof D^s u_s \}_{0<s<1}$ is bounded in $L^q (\Rn, \Rnn)$.
	In either case we have that $q>1$, so for a subsequence $\{ \cof D^{s_j} u_{s_j}\}_{j \in \N}$ converges weakly in $L^q (\Rn, \Rnn)$ and, by Theorem \ref{th:wcontdet},
	\begin{equation}\label{eq:convergence3poly}
	\cof D^{s_j} u_{s_j} \weakc \cof D u \text{ in } L^q (\Rn, \Rnn) \text{ as } j \to \infty .
	\end{equation}
	
	If $p \leq n$ then, by assumption \emph{\ref{item:Ecoerc})} and de la Vall\'ee Poussin's criterion, $\{ \det D^s u_s \}_{0<s<1}$ is equiintegrable, whereas if $p > n$ we have that $\{ \det D^s u_s\}_{0<s<1}$ is bounded in $L^{\frac{p}{n}} (\Rn)$ and $\frac{p}{n} > 1$.
	In either case we have that, for a subsequence, $\{ \det D^{s_j} u_{s_j} \}_{j \in \N}$ converges weakly in $L^{\ell} (\Rn)$ with
	\[
	\begin{cases}
	\ell = 1 & \text{if } p \leq n , \\
	\ell = \frac{p}{n} & \text{if } p > n ,
	\end{cases}
	\]
	and, hence, by Theorem \ref{th:wcontdet},
	\begin{equation}\label{eq:convergence4poly}
	\det D^{s_j} u_{s_j} \weakc \det D u \text{ in } L^{\ell} (\Rn) \text{ as } j \to \infty .
	\end{equation}
	
	Convergences \eqref{eq:convergence1poly}--\eqref{eq:convergence4poly} imply, thanks to a standard lower semicontinuity result for polyconvex functionals (see, e.g., \cite[Th.\ 5.4]{BallCurrieOlver} or \cite[Th.\ 7.5]{FoLe07}), that for any $R>0$,
\begin{equation}\label{eq:BRlower}
	\int_{B(0, R)} W(x, u(x), D u(x)) \, dx \leq \liminf_{j\to \infty} \int_{B(0, R)} W(x, u_{s_j} (x), D^{s_j} u_{s_j} (x)) \, dx .
\end{equation}
	Therefore,
	\begin{align*}
	\int_{B(0, R)} \left[ W(x, u(x), D u(x)) - a(x) \right] dx \leq \liminf_{j\to \infty} \int \left[ W(x, u_{s_j}(x), D^{s_j} u_{s_j}(x)) - a(x) \right] dx .
	\end{align*}
	By monotone convergence,
	\[
	\int \left[ W(x, u(x), D u(x)) - a(x) \right] dx \leq \liminf_{j\to \infty} \int \left[ W(x, u_{s_j}(x), D^{s_j} u_{s_j}(x)) - a(x) \right] dx ,
	\]
	so 
	\[
	\mathcal{I}(u) \leq \liminf_{j\to \infty} \mathcal{I}_{s_j} (u_{s_j}),
	\]
as desired.
\end{proof}

We finally present the main result of this paper, which shows the $\Gamma$-convergence of polyconvex functionals defined on Bessel spaces, involving $s$-fractional gradients, to a classical local polyconvex functional defined on a Sobolev space.
Unfortunately, we crucially need the extra assumption $p>n$ in order to prove the \emph{limsup inequality}.
This is because the coercivity conditions of $W$ in Proposition \ref{prop:lsc} are compatible with the standard upper bound by $|F|^p$ (which makes the functional $\mathcal{I}$ continuous in $W^{1,p}$; see \cite{dacorogna}) only in the case $p>n$.

\begin{teo}\label{th:Gconvergence}
Let $p >n$ and $0 < s < 1$. Let $\O$ be a bounded open subset of $\Rn$ and $g\in W^{1,p}(\Rn,\Rn)$. 
	Let $W : \Rn \times \Rn \times \Rnn \to \R \cup \{ \infty \}$ satisfy the following conditions:
	\begin{enumerate}[a)]
		
\item $W$ is $\mathcal{L}^n \times \mathcal{B}^n \times \mathcal{B}^{n\times n}$-measurable.

\item $W (x, \cdot, \cdot)$ is lower semicontinuous for a.e.\ $x \in \Rn$.		

\item\label{item:poly} For a.e.\ $x \in \Rn$ and every $y \in \Rn$, the function $W (x, y, \cdot)$ is polyconvex.

\item Assume there exist $c> 0$ and $a \in L^1 (\Rn)$ such that
\[
 W (x, y, F) \geq a (x) + c \left| F \right|^p , \qquad \text{a.e. } x \in \Rn, \text{ all } y \in \Rn , \text{ all } F \in \Rnn ,
\]
and for every $R>0$ there exist $a_R \in L^1 (\Rn)$ and $c_R >0$ such that for a.e.\ $x \in \Rn$, all $y \in \Rn$ with $\left| y \right| \leq R$ and all $F \in \Rnn$,
\[
 W (x, y, F) \leq a_R (x) + c_R \left| F \right|^p .
\]

\end{enumerate}
The following statements hold:
	\begin{enumerate}[i)]
		\item\label{item:compactness} For each $s\in (0,1)$, let $u_s\in H^{s,p}_g (\O,\Rn)$ satisfy
		\begin{equation}\label{eq:supbound} 
		\liminf_{s \nearrow 1} \mathcal{I}_s(u_s) < \infty.
		\end{equation}
		Then there exist $u\in W^{1,p}_g (\O, \Rn)$ and an increasing sequence $\{ s_j \}_{j \in \N}$ in $(0,1)$ with $\lim_{j \to \infty} s_j = 1$ such that $u_{s_j} \to u$ in $L^p(\R^n,\Rn)$ as $j \to \infty$.

		\item \label{item:liminf} For each $s\in (0,1)$, let $u_s\in H^{s,p}_g(\O, \Rn)$ and $u\in W^{1,p} (\Rn, \Rn)$ satisfy $u_s\to u$ in $L^p(\Rn, \Rn)$. Then 
		\[
		\mathcal{I}(u) \leq \liminf_{s\nearrow 1} \mathcal{I}_s (u_s) . 
		\]
		\item \label{item:limsup} For each $u\in W^{1,p}_g(\O,\Rn)$ and $s\in (0,1)$, there exists $u_s\in H^{s,p}_g(\O,\Rn)$ such that $u_s\to u$ in $L^p(\Rn,\Rn)$ and 
		\begin{equation}\label{eq:limsup}
		\limsup_{s\nearrow 1} \mathcal{I}_s (u_s) \leq \mathcal{I} (u).
		\end{equation}
	\end{enumerate}
\end{teo}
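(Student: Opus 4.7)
I would obtain (i) from the coercivity assumption combined with Theorem \ref{Pr: weak convergence in s}, and (ii) as a direct specialization of Proposition \ref{prop:lsc}. For (i), the lower growth bound $W(x,y,F) \geq a(x) + c|F|^p$ together with \eqref{eq:supbound} gives an increasing sequence $\{s_j\}$ along which $\mathcal{I}_{s_j}(u_{s_j})$, and hence $\|D^{s_j} u_{s_j}\|_p$, stays bounded; Theorem \ref{Pr: weak convergence in s} (with $p>n$, so the strong $L^q$-convergence extends to $q=\infty$) then produces the limit $u$ as the $L^p$-strong limit of a further subsequence, with $D^{s_j} u_{s_j} \weakc Du$ in $L^p$. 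The complementary-value condition $u_{s_j}=g$ on $\O^c$ survives the passage to the limit, giving $u \in W^{1,p}_g(\O,\Rn)$. For (ii), with $p>n$ the coercivity hypothesis of Proposition \ref{prop:lsc} reduces verbatim to hypothesis (d) here, so that proposition applies directly.

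For (iii), I would take the constant recovery sequence $u_s := u$ for every $s$. To justify $u \in H^{s,p}_g(\O,\Rn)$, pick any $\{v_k\} \subset C^\infty_c(\O,\Rn)$ converging to $u-g$ in $W^{1,p}(\Rn,\Rn)$; Proposition \ref{Lemma: inequality with W1p} makes $\{v_k\}$ Cauchy in $H^{s,p}(\Rn,\Rn)$ as well, and the unique $L^p$-limit of this Cauchy sequence must be $u-g$, so $u-g \in H^{s,p}_0(\O,\Rn)$. Theorem \ref{Prop: convergencia del gradiente fraccionario al clásico} then yields $D^s u \to Du$ strongly in $L^p(\Rn,\Rnn)$, so $\{|D^s u|^p\}$ is equiintegrable and some subsequence converges almost everywhere.

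To close the argument and obtain \eqref{eq:limsup} I would in fact prove the equality $\lim_{s \nearrow 1}\mathcal{I}_s(u) = \mathcal{I}(u)$. Since $p>n$, Morrey's embedding gives $u \in L^\infty(\Rn,\Rn)$ with $\|u\|_\infty =: R$, so hypothesis (d) forces $F \mapsto W(x,u(x),F)$ to be a polyconvex function bounded above by $a_R(x)+c_R|F|^p$ for a.e.\ $x$; by the standard continuity result for polyconvex functions with polynomial upper bound, this map is continuous in $F$. Given any sequence $s_j \nearrow 1$, I extract a further subsequence along which $D^{s_j}u \to Du$ a.e., bound $|W(x,u(x),D^{s_j}u) - a(x)|$ by the equiintegrable majorant $a_R(x) - a(x) + (c_R+c)|D^{s_j}u|^p$ derived from the two-sided growth estimate, and invoke the generalized dominated convergence theorem to obtain $\mathcal{I}_{s_j}(u) \to \mathcal{I}(u)$ along the subsequence; a routine subsequence-of-every-subsequence argument upgrades this to the full limit. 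The step I expect to be the main obstacle is the continuity of $F \mapsto W(x,u(x),F)$: it is precisely here that polyconvexity (hypothesis (c)) is indispensable for producing an upper bound that matches the lower bound of Proposition \ref{prop:lsc}, and the need for $u \in L^\infty$ to invoke the upper growth condition is what ultimately restricts the theorem to $p>n$.
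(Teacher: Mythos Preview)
Your proposal is correct and follows the same strategy as the paper: part \emph{(i)} via the coercivity bound and Theorem~\ref{Pr: weak convergence in s}, part \emph{(ii)} as a direct instance of Proposition~\ref{prop:lsc}, and part \emph{(iii)} via the constant recovery sequence $u_s=u$, Theorem~\ref{Prop: convergencia del gradiente fraccionario al clásico}, the $L^\infty$ bound on $u$ from Morrey's embedding, continuity of $W(x,y,\cdot)$ from polyconvexity plus the upper growth bound, and dominated convergence. Your treatment of \emph{(iii)} is in fact somewhat more careful than the paper's---you explicitly justify $u\in H^{s,p}_g(\Omega,\Rn)$ via Proposition~\ref{Lemma: inequality with W1p}, and you spell out the passage to an a.e.\ convergent subsequence together with the Vitali-type argument and the subsequence-of-every-subsequence upgrade, whereas the paper simply invokes ``dominated convergence''---but the route is the same.
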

\begin{proof}
	For proving \emph{\ref{item:compactness})}, just notice that by assumption \emph{\ref{item:Ecoerc})}, \eqref{eq:supbound} implies that there is an increasing sequence $\{ s_j \}_{j \in \N}$ in $(0,1)$ with $\lim_{j \to \infty} s_j = 1$ such that $\{ D^{s_j} u_{s_j} \}_{j \in \N}$ is bounded in $L^p(\Rn,\Rn)$.
Therefore, by Theorem \ref{Pr: weak convergence in s}, there exists $u\in W^{1,p}_g (\O,\Rn)$ such that, for a subsequence $u_{s_j} \to u$ in $L^p(\Rn,\Rn)$ as $j \to \infty$.
	
Part \emph{\ref{item:liminf})} is a particular case of Proposition \ref{prop:lsc}. 
	
Finally we show \emph{\ref{item:limsup})}, so we let $u \in W^{1,p}_g (\O, \Rn)$.
By Theorem \ref{Prop: convergencia del gradiente fraccionario al clásico}, $D^s u \to D u$ in $L^p (\Rn , \Rnn)$ as $s \nearrow 1$.
Assumption \emph{\ref{item:poly})} implies in particular the continuity of $W (x, y, \cdot)$ for a.e.\ $x \in \Rn$ and all $y \in \Rn$ (see, e.g., \cite{dacorogna}).
The Sobolev embedding shows that $u$ is bounded.
By the growth conditions and dominated convergence, 
\begin{equation}\label{eq:limW}
 \lim_{s \nearrow 1} \int W (x, u, D^s u) = \int W (x, u, D u) ,
\end{equation}
which proves \eqref{eq:limsup}.
\end{proof}

Although the bulk of this article has been focused on the assumption of polyconvexity, with the stronger assumption of convexity we can achieve the analogue result of Theorem \ref{th:Gconvergence} for the full range of exponents $p \in (1, \infty)$.
Since the proof is analogous (and in some steps, simpler) than that of Theorem \ref{th:Gconvergence}, it will only be sketched.

\begin{teo}\label{th:GconvergenceC}
Let $1 < p < \infty$ and $0 < s < 1$.
Let $\O$ be a bounded open subset of $\Rn$ and $g\in W^{1,p}(\Rn,\Rn)$. 
Let $W : \Rn \times \Rn \times \Rnn \to \R \cup \{ \infty \}$ satisfy the following conditions:
\begin{enumerate}[a)]
\item $W$ is $\mathcal{L}^n \times \mathcal{B}^n \times \mathcal{B}^{n\times n}$-measurable.

\item $W (x, \cdot, \cdot)$ is lower semicontinuous for a.e.\ $x \in \Rn$.		

\item For a.e.\ $x \in \Rn$ and every $y \in \Rn$, the function $W (x, y, \cdot)$ is convex.

\item[d1)] If $p<n$ assume there exist $c\geq 1$ and $a \in L^1 (\Rn)$ such that for a.e.\ $x \in \Rn$, all $y \in \Rn$ and all $F \in \Rnn$,
\[
 - a (x) + \frac{1}{c} \left| F \right|^p \leq W (x, y, F) \leq a (x) + c \left( \left| y \right|^p + \left| y \right|^{p^*} + \left| F \right|^p \right) .
\]

\item[d2)] If $p=n$ assume there exist $r \in [p, \infty)$, $c\geq 1$ and $a \in L^1 (\Rn)$ such that for a.e.\ $x \in \Rn$, all $y \in \Rn$ and all $F \in \Rnn$,
\[
 - a (x) + \frac{1}{c} \left| F \right|^p \leq W (x, y, F) \leq a (x) + c \left( \left| y \right|^p + \left| y \right|^r + \left| F \right|^p \right) .
\]

\item[d3)] If $p>n$ assume there exist $c> 0$ and $a \in L^1 (\Rn)$ such that
\[
 W (x, y, F) \geq a (x) + c \left| F \right|^p , \qquad \text{a.e. } x \in \Rn, \text{ all } y \in \Rn , \text{ all } F \in \Rnn ,
\]
and for every $R>0$ there exist $a_R \in L^1 (\Rn)$ and $c_R >0$ such that for a.e.\ $x \in \Rn$, all $y \in \Rn$ with $\left| y \right| \leq R$ and all $F \in \Rnn$,
\[
 W (x, y, F) \leq a_R (x) + c_R \left| F \right|^p .
\]

\end{enumerate}
Then, statements \ref{item:compactness})--\ref{item:limsup}) of Theorem \ref{th:Gconvergence} hold.
\end{teo}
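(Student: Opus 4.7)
The plan is to follow the three-step structure of the proof of Theorem \ref{th:Gconvergence} --- compactness, liminf inequality, limsup inequality --- but to exploit the stronger convexity hypothesis in order to cover the full range $p \in (1, \infty)$. Parts \emph{\ref{item:compactness})} and \emph{\ref{item:liminf})} are essentially immediate from the common lower bound by $|F|^p$, the compactness result of Theorem \ref{Pr: weak convergence in s} and the classical lower semicontinuity of convex normal integrands, so the main work concerns \emph{\ref{item:limsup})}, where the distinction between the three growth regimes $p > n$, $p = n$ and $p < n$ enters.

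For \emph{\ref{item:compactness})}, I would first note that the coercive bound $W(x,y,F) \geq -a(x) + \frac{1}{c} |F|^p$ is common to d1)--d3), so \eqref{eq:supbound} forces $\{ D^{s_j} u_{s_j} \}$ to be bounded in $L^p(\Rn, \Rnn)$ along a sequence $s_j \nearrow 1$ realizing the liminf, and Theorem \ref{Pr: weak convergence in s} then produces $u \in W^{1,p}_g(\O, \Rn)$ and a subsequence (not relabelled) with $u_{s_j} \to u$ in $L^p(\Rn, \Rn)$ and $D^{s_j} u_{s_j} \weakc Du$ in $L^p(\Rn, \Rnn)$. For \emph{\ref{item:liminf})} one may assume that the right-hand side is finite; the same argument then yields, along a subsequence realizing the liminf, strong $L^p$ convergence of $u_{s_j}$ to $u$ and weak $L^p$ convergence of $D^{s_j} u_{s_j}$ to $Du$, and the inequality follows from the classical lower semicontinuity of integrals of convex normal integrands under such convergences (assumptions a)--c) ensure that $W$ is indeed a normal integrand, and the $L^1$ minorant $a$ guarantees equiintegrability of the negative parts).

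For \emph{\ref{item:limsup})} I would take the stationary recovery sequence $u_s = u$. The inclusion $u \in H^{s,p}_g(\O, \Rn)$ is verified by approximating $u - g \in W^{1,p}_0(\O, \Rn)$ by functions in $C^\infty_c(\O, \Rn)$ in the $W^{1,p}$ norm and upgrading the convergence to $H^{s,p}$ via Proposition \ref{Lemma: inequality with W1p}. Theorem \ref{Prop: convergencia del gradiente fraccionario al clásico} yields $D^s u \to Du$ in $L^p(\Rn, \Rnn)$, so along a subsequence $D^s u \to Du$ almost everywhere and $|D^s u|^p \to |Du|^p$ in $L^1$; convexity of $W(x, u(x), \cdot)$ combined with the upper bound on $\Rnn$ gives continuity in the last variable, hence $W(x, u, D^s u) \to W(x, u, Du)$ almost everywhere. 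I would then conclude by Vitali's convergence theorem, provided $\{W(x, u, D^s u)\}_s$ is equiintegrable; this is the main obstacle and is where the case analysis is needed. In case d3) the Sobolev embedding yields $u \in L^\infty$, so $W(x, u, D^s u) \leq a_R(x) + c_R |D^s u|^p$ with $R = \|u\|_\infty$, and equiintegrability of $\{|D^s u|^p\}$ (a consequence of its $L^1$ convergence) transfers to the left-hand side; in case d2) the extra $|y|^r$ term is absorbed using $u \in L^r$, which follows from the Sobolev embedding for $p = n$; in case d1) the $|y|^{p^*}$ term is absorbed using $u \in L^{p^*}$ via the Sobolev embedding $W^{1,p} \hookrightarrow L^{p^*}$. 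The absence of $L^\infty$ control on $u$ in the low-regularity cases $p \leq n$ is precisely what prevents a direct reduction to d3), making the refined upper bounds tied to Sobolev conjugate exponents essential.
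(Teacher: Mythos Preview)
Your proposal is correct and follows essentially the same route as the paper: compactness from the common coercive lower bound and Theorem \ref{Pr: weak convergence in s}, the liminf inequality from a standard convex lower semicontinuity result (the paper cites \cite[Th.\ 7.5]{FoLe07} on balls $B(0,R)$ and then passes to the limit in $R$ as in Proposition \ref{prop:lsc}), and the limsup inequality via the constant recovery sequence $u_s = u$, Theorem \ref{Prop: convergencia del gradiente fraccionario al clásico}, continuity of $W(x,y,\cdot)$ from convexity, and the Sobolev embedding in the three regimes. The only cosmetic difference is that you invoke Vitali's theorem via equiintegrability of $\{|D^s u|^p\}$, whereas the paper simply says ``dominated convergence''; your formulation is in fact the more precise one, since $|D^s u|^p$ is not dominated by a fixed $L^1$ function but only converges in $L^1$.
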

\begin{proof}
The proof of \emph{\ref{item:compactness})} is the same as that of Theorem \ref{th:Gconvergence}.
	
For the proof of \emph{\ref{item:liminf})} we initially follow that of Proposition \ref{prop:lsc}.
We can assume inequality \eqref{eq:Isusfinite}, so there exists an increasing sequence $\{ s_j \}_{j \in \N}$ in $(0,1)$ with $\lim_{j \to \infty} s_j = 1$ such that $\liminf_{s\nearrow 1} \mathcal{I}_s(u_s) = \lim_{j \to \infty} \mathcal{I}_{s_j} (u_{s_j})$ and the sequence $\{D^{s_j} u_{s_j} \}_{j \in \N}$ is bounded in $L^p(\Rn,\Rn)$.
By Theorem \ref{Pr: weak convergence in s}, for a subsequence, convergences \eqref{eq:convergence1poly} hold.
By a standard lower semicontinuity result for convex functionals (see, e.g.,\cite[Th.\ 7.5]{FoLe07}), for any $R>0$, inequality \eqref{eq:BRlower} holds, and we conclude \eqref{eq:liminf} as in Proposition \ref{prop:lsc}.

To show \emph{\ref{item:limsup})} we apply Theorem \ref{Prop: convergencia del gradiente fraccionario al clásico} and obtain $D^s u \to D u$ in $L^p (\Rn , \Rnn)$ as $s \nearrow 1$.
Assumption \emph{\ref{item:poly})} implies in particular the continuity of $W (x, y, \cdot)$ for a.e.\ $x \in \Rn$ and all $y \in \Rn$.
The Sobolev embedding in the three cases ($p<n$, $p=n$ and $p>n$) shows that the growth conditions allow us to apply dominated convergence and conclude inequality \eqref{eq:limW}, as desired.
\end{proof}

 \section*{Acknowledgements}

This work has been supported by the Agencia Estatal de Investigaci\'on of the Spanish Ministry Research and Innovation, through projects MTM2017- 83740-P (J.C.B. and J.C.), and MTM2017-85934-C3-2-P (C.M.-C.).

%
\end{document}